\documentclass[10pt,reqno]{amsart}

\usepackage[latin2]{inputenc}
\usepackage{amsmath}
\usepackage{graphicx}
\usepackage{amssymb}
\usepackage{esint}
\usepackage{xcolor}
\usepackage{amsthm}
\usepackage{epsfig}
\usepackage{enumitem}
\usepackage{mathtools}
\usepackage{esvect}
\usepackage{dsfont}
\usepackage{tikz}
\usetikzlibrary{calc,intersections,through,backgrounds,patterns,arrows.meta, math,through}
\usepackage{graphicx}
\usepackage{caption}
\usepackage{subcaption}
\usepackage[english]{babel}

\usepackage{tikz-3dplot}
\usepackage{pgfplots}

\newtheorem{theorem}{Theorem}
\newtheorem{proposition}[theorem]{Proposition}
\newtheorem{lemma}[theorem]{Lemma}

\newtheorem{definition}[theorem]{Definition}
\newtheorem{construction}[theorem]{Construction}
\newtheorem{remark}[theorem]{Remark}
\newtheorem*{theorem*}{Theorem}

\theoremstyle{definition}

\allowdisplaybreaks[2]

\newcommand{\supp}{\operatorname{supp}}

\newcommand{\dist}{\operatorname{dist}} 
\newcommand{\Id}{\operatorname{Id}}

\newcommand{\R}{\mathbb{R}}

\newcommand{\h}{\operatorname{H}}

\newcommand{\vareps}{\varepsilon}

\newcommand{\ds}{\,\mathrm{d}s}
\newcommand{\dx}{\,\mathrm{d}x}

\newcommand{\ddt}{\frac{\mathrm{d}}{\mathrm{d}t}}
\newcommand{\eps}{\varepsilon}

\newcommand{\chit}{\chi_{\mathcal{T}_{i,j}^{{\color{white} b }}}}
\newcommand{\sumij}{\sum^N_{\substack{i,j=1\\i < j} } }
\newcommand{\sumkij}{\sum^N_{\substack{i,j,k=1 \\ i < j, k \notin \{i,j\}}} }

\newcommand{\sumk}{\sum^N_{\substack{ k=1 \\ k \notin \{i,j\}}}  }

\definecolor{Yellow}{rgb}{0.95,0.9,0.0} 
\definecolor{Red}{rgb}{0.8,0.1,0.1}
\definecolor{Green}{rgb}{0.1,0.65,0.2}
\definecolor{Blue}{rgb}{0.1,0.1,0.8}
\definecolor{Purple}{rgb}{0.7,0.1,0.7}
\definecolor{Grey}{rgb}{0.6,0.6,0.6}

\definecolor{YELLOW}{rgb}{0.95,0.9,0.0} 
\definecolor{RED}{rgb}{0.8,0.1,0.1}
\definecolor{GREEN}{rgb}{0.25,0.65,0.1}
\definecolor{BLUE}{rgb}{0.1,0.1,0.8}
\definecolor{PURPLE}{rgb}{0.7,0.1,0.7}

\usepackage[linktocpage=true,colorlinks=true,linkcolor=Blue,citecolor=Green]{hyperref}

\newcommand{\Rd}{{\mathbb{R}^d}}

\begin{document}

\title[Sharp interface limit of the vectorial Allen-Cahn equation]
{Quantitative convergence of the vectorial Allen-Cahn equation towards multiphase mean curvature flow}
    
\author{Julian Fischer}
\address[Julian Fischer]{Institute of Science and Technology Austria (IST Austria), Am Campus 1, 3400 Klosterneuburg, Austria}
\email{julian.fischer@ist.ac.at}

\author{Alice Marveggio}
\address[Alice Marveggio]{Institute of Science and Technology Austria (IST Austria), Am Campus 1, 3400 Klosterneuburg, Austria} 
\email{alice.marveggio@ist.ac.at}

    
\thanks{This project has received funding from the European Research Council (ERC) under the European Union's Horizon 2020 research and innovation programme (grant agreement No 948819)\smash{\includegraphics[scale=0.03]{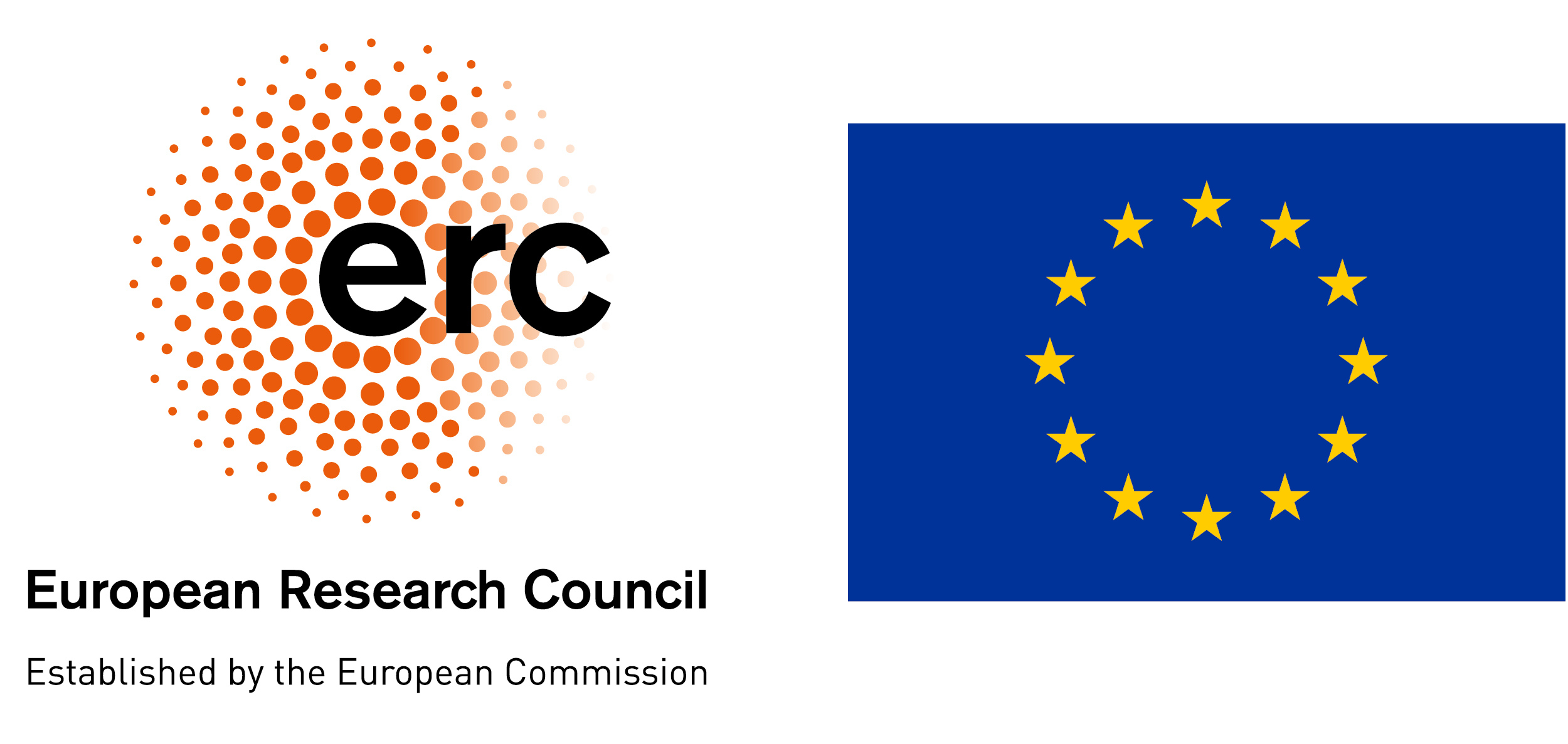}}.}

\begin{abstract}
Phase-field models such as the Allen-Cahn equation may give rise to the formation and evolution of geometric shapes, a phenomenon that may be analyzed rigorously in suitable scaling regimes.
In its sharp-interface limit, the vectorial Allen-Cahn equation with a potential with $N\geq 3$ distinct minima has been conjectured to describe the evolution of branched interfaces by multiphase mean curvature flow.

In the present work, we give a rigorous proof for this statement in two and three ambient dimensions and for a suitable class of potentials: As long as a strong solution to multiphase mean curvature flow exists, solutions to the vectorial Allen-Cahn equation with well-prepared initial data converge towards multiphase mean curvature flow in the limit of vanishing interface width parameter $\varepsilon\searrow 0$. We even establish the rate of convergence $O(\varepsilon^{1/2})$.

Our approach is based on the gradient flow structure of the Allen-Cahn equation and its limiting motion: Building on the recent concept of ``gradient flow calibrations'' for multiphase mean curvature flow, we introduce a notion of relative entropy for the vectorial Allen-Cahn equation with multi-well potential. This enables us to overcome the limitations of other approaches, e.\,g.\ avoiding the need for a stability analysis of the Allen-Cahn operator or additional convergence hypotheses for the energy at positive times.
\end{abstract}
	
\maketitle 

\section{Introduction}

In the present work, we study the behavior of solutions to the vector-valued Allen-Cahn equation
\begin{equation} \label{eq:AC}
	\partial_t u_\varepsilon = \Delta u_\varepsilon - \frac{1}{\varepsilon^2} \partial_u W(u_\varepsilon)
\end{equation}
(with $W$ being an $N$-well potential, see e.\,g.\ Figure~\ref{Fig_pot}a, and $u_\varepsilon: \mathbb{R}^d \times [0,T] \rightarrow \mathbb{R}^{N-1}$)
in the limit of vanishing interface width $\varepsilon\rightarrow 0$.
We prove that for a suitable class of $N$-well potentials $W$, in the limit $\eps\rightarrow 0$ the solutions $u_\eps$ describe a branched interface evolving by multiphase mean curvature flow (see Figure~\ref{Fig_pot}b), provided that a classical solution to the latter exists and provided that one starts with a sequence of well-prepared initial data $u_\eps(\cdot,0)$. For quantitatively well-prepared initial data $u_\eps(\cdot,0)$, we even establish a rate of convergence $O(\eps^{1/2})$ towards the multiphase mean curvature flow limit.

\begin{figure}
\begin{center}
\begin{tikzpicture}
\draw[color=white,opacity=0] (0,-1.7) -- (0,1.7);
\node at (0,0) {(a)};
\end{tikzpicture}
\includegraphics[scale=0.85]{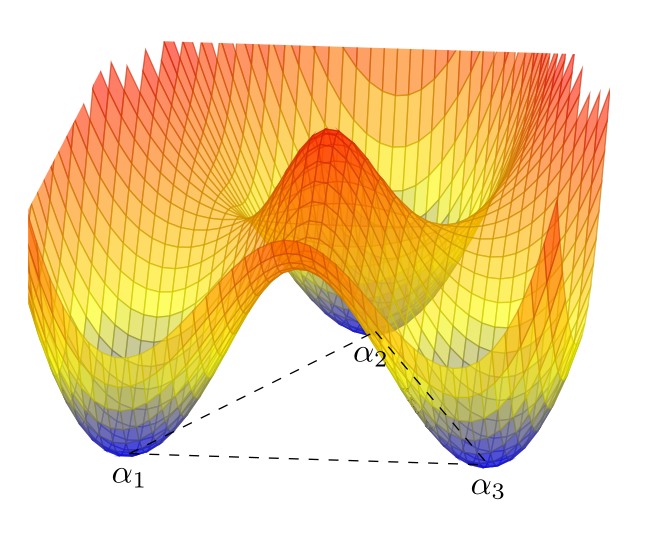}
~~~
\begin{tikzpicture}
\draw[color=white,opacity=0] (0,-1.7) -- (0,1.7);
\node at (0,0) {(b)};
\node at (1.7,0) {\includegraphics[scale=0.285]{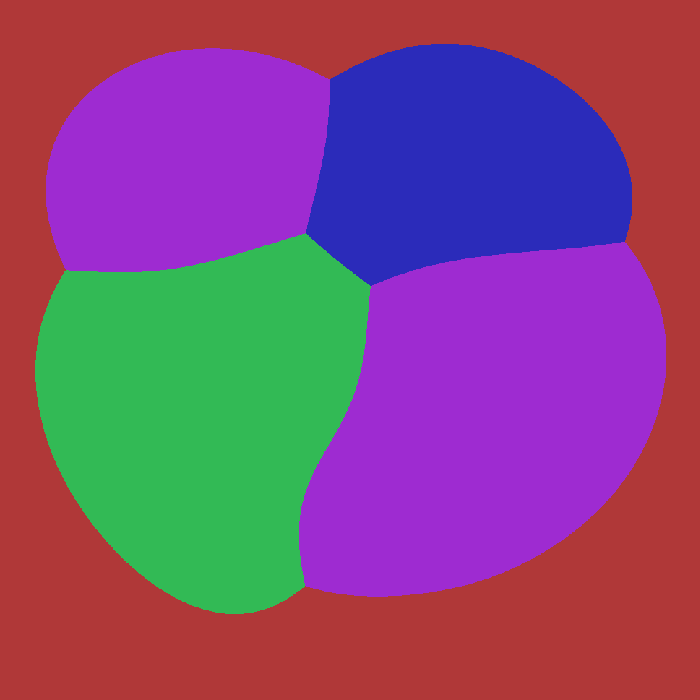}};
\node at (3.2,0) {$\rightarrow$};
\node at (4.7,0) {\includegraphics[scale=0.285]{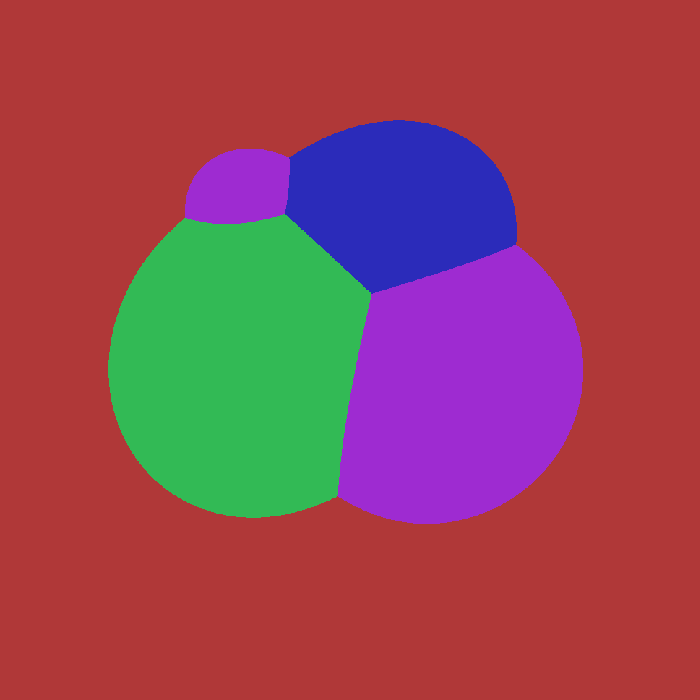}};
\end{tikzpicture}
	\end{center}
\caption{(a) A triple-well potential that attains its minimum at the three points $\alpha_1$, $\alpha_2$, $\alpha_3$. (b) A partition of $\mathbb{R}^2$ evolving by multiphase mean curvature flow, corresponding to the sharp-interface limit $\eps\rightarrow 0$ of the vectorial Allen-Cahn equation \eqref{eq:AC} with $N$-well potential $W$.}
\label{Fig_pot}
\end{figure}

The Allen-Cahn equation \eqref{eq:AC} with $N$-well potential is an important example of a phase-field model, an evolution equation for an order parameter $u_\eps$ that may vary in space and time.
Phase-field models may give rise to the formation and evolution of geometric shapes, a phenomenon that becomes amenable to a rigorous mathematical analysis in suitable scaling regimes.
For several important structural classes of potentials $W$, such a rigorous analysis has long been available for the Allen-Cahn equation: For instance, for the scalar Allen-Cahn equation with two-well potential $W$ -- that is, for \eqref{eq:AC} with $N=2$ -- the convergence towards (two-phase) mean curvature flow in the limit $\eps\rightarrow 0$ has been established by De~Mottoni and Schatzman \cite{DeMottoniSchatzman}, Bronsard and Kohn \cite{BronsardKohn}, Chen \cite{ChenReactDiff}, Ilmanen \cite{IlmanenConvergenceOfAllenCahn}, and Evans, Soner, and Souganidis \cite{EvansSonerSouganidis} in the context of three different notions of solutions to mean curvature flow (namely, strong solutions, Brakke solutions, respectively viscosity solutions).
In such two-phase situations, sharp-interface limits have also been established for more complex phase-field models \cite{Chen,AlikakosBatesChen,AbelsLiu,FeiLiu,AbelsMoserContactAngle}, typically based on an approach that relies on matched asymptotic expansions and a stability analysis of the PDE linearized around a transition profile.
Beyond the case of two-well potentials, results have been much more scarce.
One of the few well-understood settings is the case of the Ginzburg-Landau equation, which corresponds to the Allen-Cahn equation \eqref{eq:AC} with a Sombrero-type potential $W(u)=(1-|u|^2)^2$ and $N=3$, i.\,e.\ with a potential that features a continuum of minima at $\{u\in \mathbb{R}^2:|u|=1\}$. In this case, the convergence of solutions to (codimension two) vortex filaments evolving by mean curvature has been shown in dimensions $d\geq 3$ by Jerrard and Soner \cite{JerrardSonerGL}, Lin \cite{LinGinzburgLandau}, and Bethuel, Orlandi, and Smets \cite{BethuelOrlandiSmets}.

In contrast, for the (vectorial) Allen-Cahn equation \eqref{eq:AC} with a potential $W$  with $N\geq 3$ distinct minima, the only previous results on the sharp-interface limit have been a formal expansion analysis by Bronsard and Reitich \cite{BronsardReitich} and a convergence result that is conditional on the convergence of the Allen-Cahn energy
\begin{align*}
E[u_\eps] := \int_\Rd \frac{\eps}{2}|\nabla u_\eps|^2 + \frac{W(u_\eps)}{\eps} \,dx
\end{align*}
at positive times (more precisely, in $L^1([0,T])$) by Laux and Simon \cite{LauxSimon}.
In particular, to the best of our knowledge not even an unconditional proof of qualitative convergence for well-prepared initial data has been available so far. One of the main challenges that has prevented a full analysis is the emergence of ``branching'' interfaces in the (conjectured) limit of multiphase mean curvature flow (see Figure~\ref{Fig_pot}b), corresponding to a geometric singularity in the limiting motion.

In the present work, we introduce a relative energy approach for the problem of the sharp-interface limit of the vectorial Allen-Cahn equation in a multiphase setting: Building on the concept of ``gradient flow calibrations'' that has been introduced by Hensel, Laux, Simon, and the first author \cite{FischerHenselLauxSimon} precisely for the purpose of handling these branching singularities in multiphase mean curvature flow and combining it with ideas from \cite{FischerLauxSimon}, we introduce a notion of relative energy for the Allen-Cahn equation
\begin{align*}
E[u_\eps|\xi] := \int_{\Rd} \frac{\eps}{2}|\nabla u_\eps|^2 + \frac{W(u_\eps)}{\eps} + \sum_{i=1}^N \xi_i \cdot \nabla \psi_i(u_\eps) \,dx.
\end{align*}
Here, the $\xi_i$ denote a ``gradient flow calibration'' for the strong solution to multiphase mean curvature flow; in particular, $\xi_{i,j}(x,t):=\xi_i-\xi_j$ is an extension of the unit normal vector field of the interface between phases $i$ and $j$ in the strong solution to mean curvature flow at time $t$. The $\psi_i:\mathbb{R}^{N-1}\rightarrow [0,1]$ are suitable $C^{1,1}$ functions that serve as phase indicator functions; in particular, denoting the $N$ minima of the $N$-well potential $W$ by $\alpha_j$ ($1\leq j\leq N$), the functions $\psi_i$ satisfy $\psi_i(\alpha_j)=\delta_{ij}$.
Note that the functions $\psi_j-\psi_i$ will play a role that is somewhat similar to the role of the functions $\psi(u)=\int_{0}^u \sqrt{2W(s)} \,ds$ in the Modica-Mortola trick for a two-well potential $W:\mathbb{R}\rightarrow \mathbb{R}_0^+$ like $W(u)=\frac{9}{8}(1-u^2)^2$.

The properties of the gradient flow calibration $\xi_i$ and the assumptions on the functions $\psi_i:\mathbb{R}^{N-1}\rightarrow [0,1]$ will ensure that the estimate $\big|\sum_{i=1}^N \xi_i \cdot \nabla \psi_i(u_\eps) \big| \leq \tfrac{\eps}{2} |\nabla u_\eps|^2 + \tfrac{1}{\eps} W(u_\eps)$ holds, thereby guaranteeing coercivity of the relative energy $E[u_\eps|\xi]$. In our main result, we prove that for suitable initial data $u_\eps(\cdot,0)$ we have $||\psi_i(u_\eps(\cdot,t))-\bar \chi_i(\cdot,t)||_{L^1(\Rd)} \leq C \eps^{1/2}$ for all $t\leq T$, where the $\bar \chi_i$ denote the phase indicator functions from the strong solution to multiphase mean curvature flow.

Rigorous results on sharp-interface limits for phase-field models -- such as our result -- are also of particular interest from a numerical perspective: In evolution equations for interfaces like e.\,g.\ mean curvature flow, the occurrence of topological changes typically poses a challenge for numerical simulations. One approach to the simulation of evolving interfaces is to construct a mesh that discretizes the initial interface and to numerically evolve the resulting mesh over time; however, it is then a highly nontrivial (and still widely open) question how to continue the numerical mesh beyond a topology change in a numerically consistent way. An alternative approach to the simulation of evolving interfaces that avoids this issue are phase-field models, in which the geometric evolution equation for the interface is replaced by an evolution equation for an order parameter posed on the entire space, allowing also for ``mixtures'' of the phases at the transition regions.
The natural diffuse-interface approximation for multiphase mean curvature flow is given by the vector-valued Allen-Cahn equation with $N$-well potential \eqref{eq:AC}.
The advantage of phase-field approximations for geometric motions such as \eqref{eq:AC} is that one may solve them numerically using standard discretization schemes for parabolic PDEs; however, to establish convergence of the overall scheme towards the orignal interface evolution problem, it is necessary to rigorously justify the sharp-interface limit for the diffuse-interface model.

{\bf Notation.} Throughout the paper, we use standard notation for parabolic PDEs. By $\dot H^1(\Rd)$ we denote the space of functions have a weak derivative $\nabla u\in L^2(\Rd)$ and (in case $d\geq 3$) decay at infinity. In particular, for a function $u\in L^2([0,T];\dot H^1(\Rd))$ we denote by $\nabla u$ its (weak) spatial gradient and by $\partial_t u$ its (weak) time derivative. For functions defined on phase space, like our potential $W:\mathbb{R}^{N-1}\rightarrow [0,\infty)$ or the approximate phase indicator functions $\psi_i:\mathbb{R}^{N-1}\rightarrow [0,\infty)$, we denote their gradient by $\partial_u W$ respectively $\partial_u \psi_i$. For a smooth interface $I_{i,j}$, we denote its mean curvature vector by $\vec{H}_{i,j}$.

\section{Main Results}\label{sec:mainresults}

Our main result identifies the sharp-interface limit $\eps\rightarrow 0$ for the vectorial Allen-Cahn equation \eqref{eq:AC} for a sufficiently broad class of $N$-well potentials $W$ characterized by the following conditions.
\begin{itemize}
\item[(A1)] Let $W:\mathbb{R}^{N-1}\rightarrow [0,\infty)$ be an $N$-well potential of class $C^{1,1}_{loc}(\mathbb{R}^{N-1})$ that attains its minimum $W(u)=0$ precisely in $N$ distinct points $\alpha_1,\ldots,\alpha_N \in \mathbb{R}^{N-1}$. Assume that there exists an integer $q \geq 2$ and constants $C,c >0$ such that in a neighborhood of each $\alpha_i$ we have
\begin{align*}
	c |u - \alpha_i|^q  \leq W(u) \leq C |u - \alpha_i|^q
	.
\end{align*}
\item[(A2)] Let $U \subset \mathbb{R}^{N-1}$ be a bounded convex open set with piecewise $C^1$ boundary and $\{\alpha_1,\ldots,\alpha_N\}\subset \overline{U}$. Suppose that $\partial_u W(u)$ points towards $U$ for any $u\in \partial U$.
\item[(A3)] Suppose that for any two distinct $i,j\in \{1,\ldots,N\}$, there exists a unique minimizing path $\gamma_{i,j}$ connecting $\alpha_i$ to $\alpha_j$ in the sense $\int_{\gamma_{i,j}} \sqrt{2W(\gamma_{i,j})} \,\mathrm{d}\gamma_{i,j}=\smash{\inf_\gamma \int_{\gamma} \sqrt{2W(\gamma)} \,\mathrm{d} \gamma}=1,$ where the infimum is taken over all continuously differentiable paths $\gamma$ connecting $\alpha_i$ to $\alpha_j$.
\item[(A4)] Suppose that there exist continuously differentiable functions $\psi_i:\overline{U}\rightarrow [0,1]$, $1\leq i\leq N$, and a disjoint partition of $\overline{U}$ into sets $\mathcal{T}_{i,j}$, $i<j\in \{1,\ldots,N\}$, subject to the following properties:
\begin{itemize}
\item For any $i\in \{1,\ldots,N\}$, we have $\psi_i(\alpha_i)=1$ and $\psi_i(u)<1$ for $u\neq \alpha_i$.
\item Suppose that on $\mathcal{T}_{i,j}$, all $\psi_k$ with $k\notin \{i,j\}$ vanish.
\item Set $\psi_0:= 1- \sum_{i=1}^N \psi_i$ to achieve $\sum_{i=0}^N \psi_i\equiv 1$ and define $\psi_{i,j}:=\psi_j-\psi_i$. Suppose that there exists $\delta>0$ such that for any mutually distinct $i,j,k\in \{1,\ldots,N\}$ and any $u\in \mathcal{T}_{i,j}$ we have 
\vspace{-1mm}
\begin{align*}
&~~~~~~~~~~~~~\big|\tfrac{1}{2}\partial_u \psi_{i,j}(u)\big|^2 
+ \left( 1 +   \delta \right)  \big|\tfrac{1}{2\sqrt{3}}\partial_u \psi_0(u) \big|^2		
 +  \delta   \left| \partial_u \psi_{i,j} (u)\cdot   \partial_u \psi_{0} (u) \right| \\
 &~~~~~~~~~~~~~\leq 2W(u).
\vspace{-2mm}
\end{align*}
Additionally, suppose there exists a constant $C>0$ such that for any distinct $i,j \in \{1,\ldots,N\}$ and any $u\in \mathcal{T}_{i,j}$ it holds that $|\partial_u \psi_{i}(u) |\leq C \sqrt{2W(u)}$.
\end{itemize}
\end{itemize}
The assumption that our potential $W$ has a finite set of minima as stated in (A1) is fundamental for the scaling limit we consider, as a different structure of the potential would give rise to a different limiting motion -- recall that for instance a Sombrero-type potential would lead to (codimension two) vortex filaments structures \cite{BethuelOrlandiSmets,JerrardSonerGL}. The assumption (A2) is rather mild, ensuring the existence of bounded weak solutions to the vectorial Allen-Cahn equation by a maximum principle (see Remark~\ref{RemarkExistence}). The condition (A3) ensures that for each pair of minima, there is a unique optimal profile connecting the two phases; furthermore, it fixes the surface energy density for an interface between any pair of phases $i$ and $j$ to be $1$. We expect that it would be possible to generalize our results to more general classes of surface tensions as considered in \cite{FischerHenselLauxSimon}; to avoid even more complex notation, we refrain from doing so in the present manuscript.

The assumption (A4) is the only truly restrictive condition in our assumptions; in fact, it does not include potentials which at the same time feature quadratic growth at the minima $\alpha_i$ (i.\,e., with $q=2$ in (A1)) and regularity of class $C^2$. Nevertheless, as we shall see in Proposition~\ref{prop:hpW} below, there exists a broad class of $N$-well potentials -- including in particular potentials of class $C^{1,1}$ with quadratic growth at the minima $\alpha_i$ -- that satisfy all of our assumptions.

Our main result on the quantitative convergence of the vectorial Allen-Cahn equation towards multiphase mean curvature flow reads as follows.
\begin{theorem}
\label{MainTheorem}
Let $d\in \{2,3\}$. In case $d=2$, let $(\bar \chi_1,\ldots,\bar \chi_N)$ be a classical solution to multiphase mean curvature flow on $\mathbb{R}^d$ on a time interval $[0,T]$ in the sense of Definition~\ref{DefinitionStrongSolution} below; in case $d=3$, let $(\bar \chi_1,\ldots,\bar \chi_N)$ be a classical solution to multiphase mean curvature flow of double bubble type in the sense of \cite[Definition~10]{HenselLauxMultiD}. Let $\xi$ be a corresponding gradient flow calibration in the sense of Definition~\ref{DefinitionGradFlowCalibration} below.
Suppose that $W$ is a potential satisfying the assumptions (A1)--(A4). For every $\eps>0$, let $u_\eps\in L^\infty([0,T];\dot H^1(\Rd;\overline{U}))$ be a bounded weak solution to the vectorial Allen-Cahn equation \eqref{eq:AC}.

Assume furthermore that the initial data $u_\eps(\cdot,0)$ are well-prepared in the sense that
\begin{align*}
E[u_\eps|\xi](0)&\leq C\eps,
\\
\max_{i\in \{1,\ldots,N\}} \int_{\Rd} \big|\psi_i(u_\eps(\cdot,0))-\bar \chi_i(\cdot,0)\big| \min\{\dist(x,\partial \supp \bar\chi_i(\cdot,0)),1\} \,dx &\leq C \eps,
\end{align*}
where $E[u_\eps|\xi]$ denotes the relative entropy given as
\begin{align}
\label{eq:relativeenergyfirst}
E[u_\eps|\xi]
:=
\int_{\Rd} \frac{\eps}{2} |\nabla u_\eps|^2 + \frac{1}{\eps} W(u_\eps) + \sum_{i=1}^N \xi_i \cdot \nabla (\psi_i \circ u_\eps) \,dx.
\end{align}
Then the solutions $u_\eps$ to the vectorial Allen-Cahn equation converge towards multiphase mean curvature flow with the rate $O(\eps^{1/2})$ in the sense that
\begin{align*}
\sup_{t\in [0,T]} E[u_\eps|\xi]
&\leq C \eps,
\\
\sup_{t\in [0,T]} \max_{i\in \{1,\ldots,N\}} ||\psi_i(u_\eps(\cdot,t))-\bar \chi_i(\cdot,t)||_{L^1(\Rd)}
&\leq C \eps^{1/2}.
\end{align*}
\end{theorem}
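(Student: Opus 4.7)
The proof is a relative entropy/Gronwall scheme, in the spirit of the weak-strong uniqueness arguments for multiphase mean curvature flow of \cite{FischerHenselLauxSimon}, now lifted to the diffuse-interface level. There are three stages: (1) coercivity of $E[u_\eps|\xi]$; (2) a Gronwall-type estimate $\frac{d}{dt} E[u_\eps|\xi] \leq C\, E[u_\eps|\xi]$ modulo nonnegative dissipation; (3) passage from $E[u_\eps|\xi](t)\leq C\eps$ to the advertised $L^1$ rate. The coercivity in step (1) rests on the chain rule $\nabla(\psi_i\circ u_\eps) = \partial_u\psi_i(u_\eps)\nabla u_\eps$. On each region $\mathcal{T}_{i,j}$ only $\psi_i$ and $\psi_j$ vary among $\psi_1,\ldots,\psi_N$, and using $\partial_u(\psi_i+\psi_j)+\partial_u\psi_0=0$ one rewrites
\[
\sum_{k=1}^N \xi_k\cdot\partial_u\psi_k(u_\eps)\nabla u_\eps = -\tfrac12 \xi_{i,j}\cdot\partial_u\psi_{i,j}(u_\eps)\nabla u_\eps - \tfrac12(\xi_i+\xi_j)\cdot\partial_u\psi_0(u_\eps)\nabla u_\eps.
\]
Combining Young's inequality $pq\leq \tfrac{\eps}{2}q^2+\tfrac{1}{2\eps}p^2$ with the quadratic bound in (A4) and the geometric constraints built into a gradient flow calibration (notably $|\xi_{i,j}|\leq 1$ and the analogous control on $\xi_i+\xi_j$ at triple points, which is what forces the factor $1/(2\sqrt 3)$ in (A4)) yields both nonnegativity of $E[u_\eps|\xi]$ and a coercive lower bound controlling the weighted distance $\int_\Rd |\psi_i(u_\eps)-\bar\chi_i|\min\{\dist(x,\partial\supp\bar\chi_i),1\}\dx$.

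In step (2), the Allen-Cahn part of the energy dissipates, contributing $-\eps\int_\Rd|\partial_t u_\eps|^2\dx$. Differentiating the calibration correction and substituting \eqref{eq:AC}, followed by integration by parts, produces schematically
\[
\frac{d}{dt}\int_\Rd \sum_i \xi_i\cdot\nabla(\psi_i\circ u_\eps) \dx = \int_\Rd \sum_i \partial_t\xi_i\cdot\nabla(\psi_i\circ u_\eps) \dx - \int_\Rd \sum_i (\nabla\cdot\xi_i)\,\partial_u\psi_i(u_\eps)\cdot\partial_t u_\eps \dx.
\]
The defining identities of a gradient flow calibration, most notably $\partial_t\xi_{i,j}+(\xi_{i,j}\cdot\nabla)\xi_{i,j}+(\nabla\xi_{i,j})^\top\xi_{i,j}=O(\dist(\cdot,I_{i,j}))$ and $\nabla\cdot\xi_{i,j}+\vec H_{i,j}=O(\dist(\cdot,I_{i,j}))$, together with their triple-junction compatibilities, are then used to reorganise the terms: each piece is completed into a square against either the Allen-Cahn dissipation or the relative entropy density itself, while residual errors carry either a factor $\dist$ (absorbed by coercivity) or $\sqrt\eps|\partial_t u_\eps|$ (absorbed into the dissipation by Young). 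The main technical obstacle is precisely this algebraic closure: the optimal profile relation from (A3) has to match the Modica-Mortola equipartition at leading order, and (A4) has to leave just enough slack to absorb the cross terms involving $\partial_u\psi_0$ that encode the triple-junction corrections. The small parameter $\delta$ in (A4) is calibrated for exactly this purpose, and the restriction to $d\in\{2,3\}$ (with double-bubble geometry in 3D) is what makes the set of calibration identities tractable.

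Once $\frac{d}{dt}E[u_\eps|\xi]\leq C\, E[u_\eps|\xi]$ is established, Gronwall and the well-prepared initial bound give $\sup_{t\in[0,T]} E[u_\eps|\xi](t)\leq C\eps$. For the $L^1$ rate, one splits $\int_\Rd|\psi_i(u_\eps)-\bar\chi_i|\dx$ at the tube $\{\dist(x,\partial\supp\bar\chi_i(\cdot,t))<\eps^{1/2}\}$: inside the tube one uses $|\psi_i-\bar\chi_i|\leq 1$ and the $C^1$ regularity of the interface (so that the tube has measure $O(\eps^{1/2})$), while outside the tube one gains a factor $\eps^{-1/2}$ from the distance weight $\min\{\dist,1\}$. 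Both contributions are $O(\eps^{1/2})$, so $\|\psi_i(u_\eps(\cdot,t))-\bar\chi_i(\cdot,t)\|_{L^1(\Rd)}\leq C\eps^{1/2}$ uniformly in $t\in[0,T]$, as claimed.
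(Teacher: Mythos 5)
Your three-stage scheme (coercivity of $E[u_\eps|\xi]$, a Gronwall inequality $\frac{d}{dt}E \leq CE$, then a tube-splitting argument at scale $\eps^{1/2}$ to convert a distance-weighted $O(\eps)$ bound into the $L^1$ rate $O(\eps^{1/2})$) is indeed the skeleton of the paper's argument, and your step (3) correctly fills in a detail the paper leaves implicit. However, there is a genuine gap in how you connect steps (2) and (3): you assert that the coercivity of $E[u_\eps|\xi]$ already yields ``a coercive lower bound controlling the weighted distance $\int_{\Rd} |\psi_i(u_\eps)-\bar\chi_i|\min\{\dist(\cdot,\partial\supp\bar\chi_i),1\}\,dx$''. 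This is false. The relative entropy is built from the \emph{energy density} $\frac{\eps}{2}|\nabla u_\eps|^2+\frac1\eps W(u_\eps)+\sum_i\xi_i\cdot\nabla(\psi_i\circ u_\eps)$ and is therefore insensitive to any region where $u_\eps$ is constant at a pure phase. If $u_\eps$ sits at the ``wrong'' well $\alpha_j$ on a large ball deep inside $\{\bar\chi_i=1\}$, the integrand of $E[u_\eps|\xi]$ vanishes there, while $|\psi_i(u_\eps)-\bar\chi_i|\min\{\dist,1\}$ contributes $O(1)$. So one can have $E[u_\eps|\xi]$ arbitrarily small while the weighted volume discrepancy is large. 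This is also why the theorem's hypotheses contain a \emph{separate} well-preparedness condition on $\int|\psi_i(u_\eps(\cdot,0))-\bar\chi_i(\cdot,0)|\min\{\dist,1\}\,dx$: it is not implied by $E[u_\eps|\xi](0)\leq C\eps$.

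What the paper actually does at this point (Proposition~\ref{PropBulkConvergence}) is a second, independent Gronwall argument. One introduces a signed, time-evolving distance weight $\vartheta_i$ satisfying the transport condition \eqref{eq:timeevtheta}, computes $\frac{d}{dt}\int_{\Rd}(\psi_i(u_\eps)-\bar\chi_i)\vartheta_i\,dx$ using the Allen-Cahn equation and the evolution of $\vartheta_i$, and closes the estimate by reinserting the dissipation terms that Theorem~\ref{TheoremRelativeEnergyInequality} shows are controlled. This produces, via Gronwall, the bound $\sup_t\int|\psi_i(u_\eps)-\bar\chi_i|\min\{\dist,1\}\,dx\leq C\eps$ in terms of the \emph{two} initial smallness assumptions. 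Only then does your step (3) apply. A secondary, minor inaccuracy: the calibration transport identity is with respect to the velocity field $B$, i.e.\ $\partial_t\xi_{i,j}+(B\cdot\nabla)\xi_{i,j}+(\nabla B)^{\mathsf T}\xi_{i,j}=O(\dist(\cdot,I_{i,j}))$, not $(\xi_{i,j}\cdot\nabla)\xi_{i,j}+(\nabla\xi_{i,j})^{\mathsf T}\xi_{i,j}$; and the condition \eqref{eq:normalB} encodes $B\cdot\xi_{i,j}=-\nabla\cdot\xi_{i,j}+O(\dist)$, i.e.\ the mean curvature identity enters through $B$, not through $\nabla\cdot\xi_{i,j}$ alone.
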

First, let us remark that in the planar case strong solutions to multiphase mean curvature flow are known to exist prior to the first topology change for quite general initial data \cite{BronsardReitich,MantegazzaNovagaPludaSchulze}. Beyond topology changes, in general the evolution by multiphase mean curvature flow may become unstable and uniqueness of solutions may fail, see e.\,g.\ the discussion in \cite{MantegazzaNovagaPludaSchulze} or \cite{FischerHenselLauxSimon}. Thus, quantitative approximation results for multiphase mean curvature flow of the form of our Theorem~\ref{MainTheorem} should not be expected to hold beyond the first topology change. In this sense, our result is optimal.

Second, let us emphasize that by \cite{FischerHenselLauxSimon} and \cite{HenselLauxMultiD} the existence of a gradient flow calibration is ensured
in the following situations:
\begin{itemize}
\item In the planar case $d=2$, gradient flow calibrations exist as long as a strong solution exists.
\item In the three-dimensional case $d=3$, gradient flow calibrations exist as long as a strong solution of double bubble type
(i.\,e.\ in particular with at most $3$ phases meeting at each point) exists.
\end{itemize}
Note that more generally we expect gradient flow calibrations to exist as long as a classical solution to multiphase mean curvature flow exists. Since the construction becomes increasingly technical when the geometrical features become more complex, the construction has not yet been carried out in these more general situations. Nevertheless, as soon as gradient flow calibration becomes available, our results below apply and yield the convergence of the vectorial Allen-Cahn equation to multiphase mean curvature flow in the corresponding setting.

Next, let us remark that we may weaken the assumptions on the sequence of initial data if we are content with lower rates of convergence or merely qualitative convergence statements.
\begin{remark}
As an inspection of the proof of Theorem~\ref{MainTheorem} readily reveals, the assumption of quantitative well-preparedness of the initial data in our theorem can be relaxed, even to a qualitative one. For instance, by merely assuming the qualitative convergences $\lim_{\eps \rightarrow 0} E[u_\eps|\xi](0)=0$ and $\lim_{\eps\rightarrow 0} \max_{i\in \{1,\ldots,N\}} ||\psi_i(u_\eps(\cdot,0))-\bar \chi_i(\cdot,0)||_{L^1(\Rd)}=0$ at initial time, from Theorem~\ref{TheoremRelativeEnergyInequality} and Proposition~\ref{PropBulkConvergence} we are able to obtain the qualitative convergence statement
\begin{align*}
\lim_{\eps\rightarrow 0} \sup_{t\in [0,T]} E[u_\eps|\xi]
 =0=\lim_{\eps\rightarrow 0} \sup_{t\in [0,T]} \max_{i\in \{1,\ldots,N\}} ||\psi_i(u_\eps(\cdot,t))-\bar \chi_i(\cdot,t)||_{L^{1}(\Rd)}.
\end{align*}
Observe furthermore that by the definition of the relative entropy, the convergence $\lim_{\eps \rightarrow 0} E[u_\eps|\xi](0)=0$ is in fact implied by the convergence of the initial energies $E[u_\eps](0)\rightarrow E[\bar \chi](0)=\frac{1}{2}\sum_{i} |\nabla \bar \chi_i(\cdot,0)|(\Rd)$ and the convergence of the initial data $u_\eps(\cdot,0)\rightarrow \sum_{i=1}^N \alpha_i \bar \chi_i(\cdot,0)$ in $L^1(\Rd)$.

To summarize, under the assumptions of Theorem~\ref{MainTheorem} but given now a sequence of solutions $(u_\eps)_\eps$ to the Allen-Cahn equation \eqref{eq:AC} satisfying only the qualitative converge properties at initial time
\begin{align*}
u_\eps(\cdot,0) &\underset{\eps\rightarrow 0}{\longrightarrow} \sum_{i=1}^N \alpha_i \bar \chi_i(\cdot,0) &&\text{in }L^1(\Rd),
\\
E[u_\eps](0) &\underset{\eps\rightarrow 0}{\longrightarrow} E[\bar \chi](0),
\end{align*}
the solutions $u_\eps$ converge to multiphase mean curvature flow in the sense that
\begin{align*}
u_\eps (\cdot,t) &\underset{\eps\rightarrow 0}{\longrightarrow} \sum_{i=1}^N \alpha_i \bar \chi_i(\cdot,t)&&\text{in }L^{1}(\Rd)\text{ for all }t\in [0,T].
\end{align*}
\end{remark}

As the next proposition (and its rather straightforward proof, proceeding by glueing together one-dimensional Modica-Mortola profiles) shows, well-prepared initial data satisfying the upper bound $O(\eps)$ for the relative energy actually exist.
\begin{proposition}
\label{PropositionInitialData}
Let assumptions (A1)--(A4) be in place.
Let $d=2$ and let $(\bar \chi_1(\cdot,0),\ldots,\bar \chi_N(\cdot,0))$ be any initial data whose interfaces consist of finitely many $C^1$ curves that meet at finitely many triple junctions at angles of $120^\circ$. Alternatively, let $d=3$ and let $(\bar \chi_1(\cdot,0),\ldots,\bar \chi_N(\cdot,0))$ be any initial data whose interfaces consist of finitely many $C^1$ interfaces that meet at finitely many triple lines of class $C^1$ at angles of $120^\circ$.

Then for any $\eps>0$ there exists initial data $u_\eps(\cdot,0)$ that is well-prepared in the sense that
\begin{align*}
E_\eps[u_\eps|\xi](0) &\leq C \eps,
\\
\max_{i\in \{1,\ldots,N\}} \int_{\Rd} |\psi_i(u_\eps(\cdot,0))-\bar \chi_i(\cdot,0)| \dist(x,\partial \supp \bar\chi_i(\cdot,0)) \,dx &\leq C \eps,
\end{align*}
where the constant $C$ depends on the initial data $(\bar \chi_1(\cdot,0),\ldots,\bar \chi_N(\cdot,0))$ and on the potential $W$.
\end{proposition}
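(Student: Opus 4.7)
The plan is to paste together one-dimensional optimal transition profiles near the smooth parts of the interface, set $u_\eps(\cdot,0)\equiv\alpha_i$ in the bulk of phase $i$, and handle the singular set $S$ of the initial partition (finitely many triple junctions for $d=2$, finitely many $C^1$ triple lines for $d=3$) by a crude Lipschitz extension in its $\eps$-neighbourhood $\mathcal{N}_\eps$. For each ordered pair $(i,j)$ with $i\neq j$, let $\Gamma_{i,j}\colon\R\to\overline{U}$ be the unit-speed reparametrisation, in the degenerate metric $\sqrt{2W}\,|du|$, of the optimal path $\gamma_{i,j}$ from (A3); then $\Gamma_{i,j}(\mp\infty)=\alpha_i,\alpha_j$ and $|\Gamma_{i,j}'|^2=2W(\Gamma_{i,j})$. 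The growth condition (A1), combined with $|\partial_u\psi_\ell|\leq C\sqrt{2W}$ from (A4), makes $r\mapsto(1+|r|)\,|\psi_\ell(\Gamma_{i,j}(r))-\delta_{\ell,i}\mathbf{1}_{r<0}-\delta_{\ell,j}\mathbf{1}_{r>0}|$ integrable on $\R$. Outside $\mathcal{N}_\eps$ the interface decomposes into disjoint smooth pieces $I_{i,j}^0$; on a tubular neighbourhood of each such piece set
\[
u_\eps(x,0):=\Gamma_{i,j}\bigl(s_{i,j}(x)/\eps\bigr),
\]
with $s_{i,j}$ the signed distance to $I_{i,j}^0$, and $u_\eps(\cdot,0)\equiv\alpha_i$ elsewhere in phase $i$. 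Inside $\mathcal{N}_\eps$ extend this outer map to any $\overline{U}$-valued Lipschitz map of Lipschitz constant $O(\eps^{-1})$; Kirszbraun's theorem together with the convexity of $\overline{U}$ from (A2) provides one.

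\medskip

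\textbf{Estimates.} On each tube, the equipartition together with a coarea change of variables $r=s_{i,j}/\eps$ yields $\int\frac{\eps}{2}|\nabla u_\eps|^2+\frac{1}{\eps}W(u_\eps)\,dx = \mathcal{H}^{d-1}(I_{i,j}^0)+O(\eps)$. Since $\xi_{i,j}=\nabla s_{i,j}$ on $I_{i,j}^0$ by the calibration property, and since along $\Gamma_{i,j}$ only $\psi_i,\psi_j$ vary (among $\psi_1,\dots,\psi_N$), the same change of variables gives $\int\sum_\ell \xi_\ell\cdot\nabla(\psi_\ell\circ u_\eps)\,dx = -\mathcal{H}^{d-1}(I_{i,j}^0)+O(\eps)$, so the two leading-order surface contributions cancel. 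On $\mathcal{N}_\eps$, the integrand of $E[u_\eps|\xi](0)$ is pointwise $O(\eps^{-1})$ by the coercivity built into (A4); since $|\mathcal{N}_\eps|=O(\eps^{d-\dim S})=O(\eps^2)$ in both settings, the contribution of $\mathcal{N}_\eps$ is $O(\eps)$. Hence $E[u_\eps|\xi](0)\leq C\eps$. For the weighted $L^1$ estimate, coarea on each tube with $r=s_{i,j}/\eps$ produces a contribution $\eps^2\mathcal{H}^{d-1}(I_{i,j}^0)\int_\R|\psi_i(\Gamma_{i,j}(r))-\mathbf{1}_{r<0}|\,|r|\,dr=O(\eps^2)$, while on $\mathcal{N}_\eps$ the distance factor is $\leq\eps$ and $|\mathcal{N}_\eps|=O(\eps^2)$ give $O(\eps^3)$.

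\medskip

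\textbf{Main obstacle.} The only nontrivial step is the leading-order cancellation of the Allen--Cahn surface energy by the calibration term along each $I_{i,j}^0$. Three ingredients come together here: equipartition of the optimal profile, the identity $\xi_{i,j}|_{I_{i,j}^0}=\nabla s_{i,j}|_{I_{i,j}^0}$ built into the gradient-flow calibration, and the normalisation $\inf_\gamma\int_\gamma\sqrt{2W}\,d\gamma=1$ from (A3) that equalises all pairwise surface tensions. This last point is also what makes the Herring $120^\circ$ condition at $S$ compatible with the three adjacent one-dimensional profiles, so that the crude extension inside $\mathcal{N}_\eps$ cannot produce sign-indefinite contributions violating the $O(\eps)$ bound. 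Once this cancellation is in place, the remaining estimates are routine.
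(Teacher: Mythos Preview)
Your overall strategy matches the paper's: one-dimensional optimal profiles in tubes around the smooth interfaces, pure phases in the bulk, and a crude Lipschitz fill-in on an $\eps$-neighbourhood of the singular set. The cancellation argument on the tubes and the volume estimate on $\mathcal{N}_\eps$ are both correct.

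There is, however, a genuine gap in the construction. You treat the exterior of $\mathcal{N}_\eps$ as if it decomposes cleanly into disjoint profile-tubes and pure-phase bulk, but near a triple junction this fails. If your tubular neighbourhoods have any fixed width $\rho>0$, then at distances $r\in(\eps,\rho)$ from the junction the tubes around adjacent interfaces overlap: a point in the wedge of phase $j$ at distance $\sim r$ from the junction lies within $\rho$ of both $I_{i,j}$ and $I_{j,k}$, and your prescription assigns it two incompatible values $\Gamma_{i,j}(s_{i,j}/\eps)$ and $\Gamma_{j,k}(s_{j,k}/\eps)$. These agree only as $r/\eps\to\infty$; for $r$ comparable to $\eps$ they differ by $O(1)$. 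Making the tubes wedge-shaped removes the overlap but creates an $O(1)$ jump along the wedge bisectors, so $\int\eps|\nabla u_\eps|^2=\infty$. Enlarging the Kirszbraun region to $\mathcal{N}_{K\eps}$ for fixed $K$ does not resolve this, since the overlap persists outside. The paper handles this by a wedge decomposition of a full $\rho$-neighbourhood of each junction: interface wedges carry a single (rescaled, truncated) profile, while in each pure-phase wedge $W^\rho_j\setminus W^\eps_j$ one explicitly interpolates in the angular variable between the two adjacent profiles. Showing that this interpolation contributes only $O(\eps)$---using the decay rates from (A1)---is the longest and most delicate part of the construction, and it is precisely the step your proposal omits. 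Your ``Main obstacle'' paragraph also slightly misidentifies the difficulty: the relative-entropy integrand is pointwise nonnegative by (A4), so sign issues never arise on $\mathcal{N}_\eps$; the Herring condition matters only in that it makes the wedge geometry symmetric, not for any cancellation inside the crude-extension region.
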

Nevertheless, note that in the presence of triple junctions this rate of convergence $O(\eps)$ for the relative entropy cannot be improved without modifying either the definition of the relative entropy \eqref{eq:relativeenergy} or our assumptions (A1)--(A4), as it may be impossible to construct initial data $u_\eps(\cdot,0)$ with $E_\eps[u_\eps|\xi]\ll \eps$. Let us illustrate the reason for this limitation in the case $d=2$: Suppose that the initial data $\bar \chi(\cdot,0)$ for the strong solution contain at least one triple junction. By virtue of the term $\int \tfrac{\eps}{2}|\nabla u_\eps|^2 \,dx$ in the energy and the pointwise nonnegativity of the integrand in the relative entropy, if we were to have $E[u_\eps|\xi](0)\ll \eps$, the approximating initial data $u_\eps(\cdot,0)$ would have to contain a true mixture of three phases in an $\eps$-ball $B_\eps(y)$ somewhere. At the same time, our assumptions (A1)--(A4) allow the potential $W$ to be arbitrarily large for a true mixture of three phases (i.\,e., away from the boundary of the triangle in Figure~\ref{Fig_pot}a for a three-well potential as in Definition~\ref{def:3wellW}), independently of the functions $\psi_i$. If $W$ is large enough, on $B_\eps(y)$ the energy density $\tfrac{\eps}{2}|\nabla u_\eps|^2 + \tfrac{1}{\eps} W(u_\eps)$ then cannot be compensated by the term involving $\nabla \psi_i(u_\eps)$ in the relative entropy, resulting in a lower bound for the relative entropy of the order of $\smash{\int_{B_\eps(y)} \frac{1}{2\eps} W(u_\eps) \,dx \geq c \eps^{-1}\times \eps^2=c\eps}$. This limits the overall convergence rate for our method to $O(\eps^{1/2})$ when measured e.\,g.\ in the $L^1$ norm. We expect this to be a limitation of our method, caused by an insufficient control of the precise dynamics of the diffuse-interface model at triple junctions by the relative entropy $E[u_\eps|\xi]$; for suitably prepared initial data, we would anticipate a convergence rate $O(\eps)$. Whether such an improved convergence rate can be deduced by a more refined relative entropy approach is an open question.

Observe that the assumptions (A1) and (A2) are indeed sufficient to deduce global existence of bounded solutions to the Allen-Cahn equation \eqref{eq:AC}, starting from any measurable initial data taking values in $\overline U$.
\begin{remark} \label{RemarkExistence}
Let $W$ be any potential of class $C^{1,1}_{loc}$ satisfying our assumption (A2).
Given any measurable initial data $u_\varepsilon (\cdot,0)$ taking values in $\overline U$, for any $T>0$ there exists a unique bounded weak solution $u_\eps$ to the Allen-Cahn equation \eqref{eq:AC} on the time interval $[0,T]$. To see this, one may first show existence of a weak solution for a slightly modified PDE obtained by replacing $\partial_u W$ outside of $\overline U$ by a Lipschitz extension. For this modified PDE, existence of a weak solution can be shown in a standard way. A comparison argument (using (A2) and in particular the convexity of $\overline U$) then ensures that the weak solution to this modified equation may only take values in $\overline U$, proving both that it is bounded and that it actually solves the original equation.
Uniqueness is shown via the standard argument of a Gronwall-type estimate for the squared $L^2(\Rd)$ norm of the difference between two solutions.
\end{remark}

We next recall the definition of strong solutions to multiphase mean curvature flow in the case of two dimensions. For intuitive but technical-to-state geometric notions, we will refer to the precise definitions in \cite{FischerHenselLauxSimon}.
\begin{definition}[Strong solution for multiphase mean curvature flow]
\label{DefinitionStrongSolution}
Let $d=2$, let $P\geq 2$ be an integer,
and let $T>0$ be a finite time horizon.
Let $\bar\chi_0=(\bar\chi_1^0,\ldots,\bar\chi_P^0)$ be an initial regular partition of $\mathbb{R}^2$ with finite interface energy
in the sense of \cite[Definition~14]{FischerHenselLauxSimon}.

A measurable map
\begin{align*}
\bar\chi=(\bar\chi_1,\ldots,\bar\chi_P)\colon \Rd\times [0,T]\to\{0,1\}^P,
\end{align*}
is called a \emph{strong solution for multiphase mean curvature flow with initial data $\bar\chi_0$} if it satisfies the following conditions:
\begin{subequations}
\begin{itemize}[leftmargin=0.7cm]
\item[i)] (Smoothly evolving regular partition with finite interface energy)
Denote by $I_{i,j}:=\supp \bar \chi_i \cap \supp \bar \chi_j$ for $i\neq j$ the interface between phases $i$ and $j$.
The map~$\bar\chi$ is a smoothly evolving regular partition of $\mathbb{R}^d {\times}[0,T]$
and $\mathcal{I}:=\bigcup_{i,j\in\{1,\ldots,P\},i\neq j}I_{i,j}$ 
is a smoothly evolving regular network of interfaces in 
$\mathbb{R}^d{\times}[0,T]$ in the sense of \cite[Definition~15]{FischerHenselLauxSimon}. In particular, 
for every ${t}\in [0,T]$, $\bar\chi(\cdot,t)$ is a regular partition of $\mathbb{R}^d$
and $\smash{\bigcup_{i\neq j}I_{i,j}(t)}$ is a regular network of interfaces in $\mathbb{R}^d$
in the sense of \cite[Definition~14]{FischerHenselLauxSimon} such that
\begin{align}\label{GlobalEnergyBoundStrongSolution}
\sup_{t\in [0,T]} E[\bar\chi(\cdot,{t})] 
= \sup_{t\in [0,T]} \sum_{i,j=1,i<j}^P 
\int_{{I}_{i,j}(t)}1\,\mathrm{d}S < \infty.
\end{align}
\item[ii)] (Evolution by mean curvature) For $i,j =1,\ldots, P$ with 
$i\neq j$ and $(x,t)\in {I}_{i,j}$ let $\bar V_{i,j}(x,t)$ denote 
the normal speed of the interface at the point $x\in {I}_{i,j}(t)$.
Denoting by $\vec{H}_{i,j}(x,t)$ and $\vec{n}_{i,j}(x,t)$ the mean curvature vector and the normal vector of ${I}_{i,j}(t)$
at~$x\in I_{i,j}(t)$, the interfaces ${I}_{i,j}$ evolve by mean curvature in the sense
\begin{align}\label{MotionByMeanCurvature}
\bar V_{i,j}(x,t) \vec{n}_{i,j}(x,t) = \vec{H}_{i,j}(x,t),
\quad\text{for all } t\in [0,T],\, x\in {I}_{i,j}(t).
\end{align}
	\item[iii)] (Initial conditions) We have $\bar \chi_i(x,0) = \bar \chi_{i}^0(x)$ for all points $x\in \Rd$ and 
	each phase $i\in\{1,\ldots, P\}$.
\end{itemize}
\end{subequations}
\end{definition}

Our main results centrally rely on the concept of gradient flow calibrations introduced in \cite{FischerHenselLauxSimon}, whose definition we next recall.

\begin{definition}
\label{DefinitionGradFlowCalibration}
Let $d\geq 2$.
Let $(\bar \chi_1,\ldots,\bar \chi_N)$ be a smoothly evolving partition of $\Rd$ on a time interval $[0,T)$. Denote by $I_{i,j}:=\supp \bar\chi_i \cap \supp \bar\chi_j$, $1\leq i,j\leq N$, $i\neq j$, the corresponding interfaces. We say that a collection of $C^{1,1}$ vector fields $\xi_i:\mathbb{R}^d \times [0,T) \rightarrow \mathbb{R}^d$, $1\leq i\leq N$, and $B:\mathbb{R}^d \times [0,T) \rightarrow \mathbb{R}^d$ is a gradient flow calibration if the following conditions are satisfied:
\begin{subequations}
		\begin{align} \label{eq:timeevxiij} 
		\partial_t \xi_{i,j} + (B\cdot \nabla ) \xi_{i,j}  + (\nabla B)^\mathsf{T}\xi_{i,j}
		= O(\dist(\cdot, I_{i,j} )),
		\end{align}
		\begin{align} \label{eq:lenghtxiij}
	\frac12  \xi_{i,j} \cdot ( \partial_t \xi_{i,j} + (B\cdot \nabla ) \xi_{i,j}   ) = O(\dist^2(\cdot, I_{i,j} )),
		\end{align}
		\begin{align} \label{eq:normalB}
		(B\cdot {\xi}_{i,j}) {\xi}_{i,j} + (\nabla \cdot \xi_{i,j})\xi_{i,j},
			= O(\dist(\cdot, I_{i,j} ))
		\end{align}
		\begin{align} \label{eq:gradBnormal} 
		\nabla B : \xi_{i,j} \otimes \xi_{i,j} 	= O(\dist(\cdot, I_{i,j} )),
		\end{align}
		\begin{align} \label{eq:gradBmixed} 
	\nabla B :( \xi^\perp_{i,j}  \otimes {\xi}_{i,j} +   {\xi}_{i,j} \otimes \xi^\perp_{i,j}  ) 	= O(\dist(\cdot, I_{i,j} )),
		\end{align}
		\begin{align}
		\label{eq:lenghtconstrxiij}
		1-C_{\text{len}}\dist^2(\cdot, I_{i,j}) \leq |\xi_{i,j}|^2\leq \max\{1-c_{\text{len}} \dist^2(\cdot, I_{i,j})),0\},
		\end{align}
		\begin{align} \label{eq:normalOnInterface} 
	\xi_{i,j} = \vec{n}_{i,j} \qquad \text{on }I_{i,j},
		\end{align}
		\begin{align}
		|\sqrt{3} \xi_k|\leq 1 \qquad \text{and} \qquad \sum_{i=1}^N \xi_i = 0,
		\end{align}
\begin{align}
	|\sqrt{3} \xi_{i,j} \cdot \xi_k | \leq \min\{c_\perp \dist(\cdot, I_{i,j}), \sqrt{\delta_{\text{cal}}} \}
	\quad 
\text{ and } \quad
	\delta_{\text{cal}} \geq \frac{c_\perp^2}{c_{\text{len}}},
\end{align}
\end{subequations}
for some $\delta_{\text{cal}}>0$, some $c_{len}>0$ and some $c_\perp^2>0$.

Moreover, we call a family of $C^{1,1}$ functions $\vartheta_i$ a family of evolving distance weights if they satisfy
\begin{subequations}
\label{DistanceWeightConditions}
\begin{align}
\vartheta_i(\cdot,t) &\leq -c \min\{\dist(\cdot,I_{i,j}(t)),1\} &&\text{in }\{\bar \chi_i(\cdot,t)=1\},
\\
\vartheta_i(\cdot,t) &\geq c \min\{\dist(\cdot,I_{i,j}(t)), 1\} &&\text{outside of }\{\bar \chi_i(\cdot,t)=1\},
\\
|\vartheta_i(\cdot,t)|&\leq C \min\{\dist(\cdot,I_{i,j}(t)),1\} &&\text{globally},
\end{align}
\end{subequations}
and
	\begin{align} \label{eq:timeevtheta}
	|\partial_t \vartheta_i + B \cdot \nabla \vartheta_i| \leq C |\vartheta_i|.
	\end{align}
\end{definition}
Note that the existence of a calibration for a given smoothly evolving partition entails that the partition must evolve by multiphase mean curvature flow (i.\,e., the partition must be a strong solution to multiphase mean curvature flow). In fact, the conditions \eqref{eq:timeevxiij}, \eqref{eq:normalB}, \eqref{eq:normalOnInterface}, and \eqref{eq:lenghtconstrxiij} are sufficient to deduce the property \eqref{MotionByMeanCurvature}.

For many geometries, $(\bar \chi_1,\ldots,\bar \chi_N)$ being a strong solution to multiphase mean curvature flow is also sufficient to construct a gradient flow calibration.

\begin{theorem}[{Existence of gradient flow calibrations, \cite[Theorem~6]{FischerHenselLauxSimon} and \cite[Theorem~1]{HenselLauxMultiD}}]
Let $d\in \{2,3\}$ and let $\bar \chi_0$ be a regular partition of $\mathbb{R}^d$ with finite surface energy; for $d=3$, assume furthermore that the partition corresponds to a double bubble type geometry. Let $\bar \chi$ be a strong solution to multiphase mean curvature flow on the time interval $[0,T]$ in the sense of Definition~\ref{DefinitionStrongSolution} (for $d=2$) respectively in the sense of \cite[Definition~10]{HenselLauxMultiD} (for $d=3$). Then for any $\delta_{\text{cal}}>0$ and any $c_{len}\geq 1$ there exists a gradient flow calibration in the sense of Definition~\ref{DefinitionGradFlowCalibration} up to time $T$. Furthermore, there also exists a family of evolving distance weights.
\end{theorem}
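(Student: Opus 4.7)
The plan is to prove the theorem by an explicit local-to-global construction of the vector fields $\xi_i$ and $B$, following the strategy introduced in \cite{FischerHenselLauxSimon} for $d=2$ and extended to double-bubble geometry in $d=3$ in \cite{HenselLauxMultiD}. The overall architecture will be: (i) build a local model for the calibration in a tubular neighborhood of each smooth piece of each interface $I_{i,j}$; (ii) build a separate local model near each triple junction (in 2D) or triple line (in 3D), where the constraint $\sum_i \xi_i = 0$ and the sphericity bound $|\sqrt 3 \xi_k|\le 1$ must be reconciled with the prescribed normal data; (iii) patch these local constructions via a partition of unity subordinate to the smoothly-evolving decomposition of a neighborhood of $\mathcal I$ into ``bulk'', ``two-phase'' and ``junction'' pieces.

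In the two-phase regime, away from singularities, I would let $s_{i,j}(x,t)$ denote the signed distance to $I_{i,j}(t)$ and set $\xi_{i,j}(x,t):=\eta(s_{i,j}(x,t))\nabla s_{i,j}(x,t)$, where $\eta\colon \mathbb{R}\to [0,1]$ is an even cutoff equal to $1$ near $0$, with $\eta(r)^2 = 1 - c_{\text{len}} r^2 + O(r^4)$, tuned so that \eqref{eq:lenghtconstrxiij} is sharp. Differentiating the eikonal identity $|\nabla s_{i,j}|=1$ gives $(\nabla^2 s_{i,j})\nabla s_{i,j}=0$, which together with the classical identity $\partial_t s_{i,j} = -\bar V_{i,j}\circ \pi$ along normal lines (where $\pi$ is the nearest-point projection to $I_{i,j}$) reduces verification of \eqref{eq:timeevxiij}--\eqref{eq:gradBmixed} to a Taylor expansion in $s_{i,j}$, once $B$ has been chosen to coincide with the mean-curvature vector field $\vec H_{i,j}\circ\pi$ to leading order. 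The individual $\xi_i$ are then recovered locally near $I_{i,j}$ by writing $\xi_i:=\tfrac12 \xi_{i,j}$ and $\xi_j:=-\tfrac12 \xi_{i,j}$ and $\xi_k\equiv 0$ for $k\notin\{i,j\}$, which automatically verifies $\sum_i \xi_i=0$.

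The main obstacle is the construction near a triple junction. Here one exploits the $120^\circ$-condition, which is equivalent to the algebraic identity $\vec n_{1,2}+\vec n_{2,3}+\vec n_{3,1}=0$ at the junction. One seeks three vector fields $\xi_1,\xi_2,\xi_3$ that sum to zero and satisfy $\xi_j-\xi_i=\vec n_{i,j}$ on $I_{i,j}$ locally. A natural ansatz is to rotate into a standard frame at the junction, use the unique solution $\xi_i^*$ of the linear system at the junction itself (which gives $|\xi_i^*|=1/\sqrt 3$, saturating the bound $|\sqrt 3 \xi_k|\le 1$), and then extend smoothly while respecting \eqref{eq:normalOnInterface} on each of the three half-interfaces. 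To maintain the length constraint \eqref{eq:lenghtconstrxiij} on the differences, one composes the rotational extension with a radial cutoff in a coarser ``junction ball'' of radius comparable to the regularity scale of the network. Patching with the two-phase construction via a partition of unity indexed by the smooth geometric pieces then produces the global $\xi_i$; the estimates \eqref{eq:timeevxiij}--\eqref{eq:gradBmixed} carry over because on the overlap regions the two local models are tangent to second order in the distance to $I_{i,j}$.

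It remains to construct $B$ and the distance weights $\vartheta_i$. For $B$, one takes a $C^{1,1}$ extension of the normal velocities $\bar V_{i,j}\vec n_{i,j}=\vec H_{i,j}$ from the interfaces to $\Rd\times[0,T]$ that is tangent to the triple junction's trajectory; near smooth interface pieces the natural choice is $B:=\vec H_{i,j}\circ\pi$, and near junctions one again patches via a partition of unity after constructing a local extension using the known smoothness of the junction worldline under the strong-solution assumption. Given such $B$, the identities \eqref{eq:normalB}--\eqref{eq:gradBmixed} are verified by expanding in powers of $\dist(\cdot, I_{i,j})$ using \eqref{MotionByMeanCurvature}. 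For the distance weights $\vartheta_i$, one takes $\vartheta_i:=-\zeta(\dist_{i}^{\rm s}(\cdot,t))$ where $\dist_i^{\rm s}$ is a signed distance to $\partial\supp\bar\chi_i(\cdot,t)$ regularized near the junctions and $\zeta$ is a bounded $C^{1,1}$ truncation of the identity; \eqref{eq:timeevtheta} reduces to the transport identity $(\partial_t + B\cdot\nabla)\dist_i^{\rm s}=O(\dist_i^{\rm s})$, which follows from the motion law as in the two-phase case. The hardest step throughout is ensuring simultaneous validity of all conditions near the junctions with the correct quantitative dependence on $\delta_{\text{cal}}$ and $c_{\text{len}}$; this is precisely what is carried out in \cite[Theorem~6]{FischerHenselLauxSimon} for $d=2$ and in \cite[Theorem~1]{HenselLauxMultiD} for the three-dimensional double-bubble setting, to which we appeal.
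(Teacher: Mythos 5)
This theorem is imported by citation: the paper does not give a proof, deferring entirely to \cite[Theorem~6]{FischerHenselLauxSimon} for $d=2$ and \cite[Theorem~1]{HenselLauxMultiD} for $d=3$. Your proposal is consistent with that treatment, since you close by appealing to the same two references; what you add is a sketch of the construction carried out there. The sketch captures the genuine structure of those works accurately: the signed-distance ansatz $\xi_{i,j}=\eta(s_{i,j})\nabla s_{i,j}$ with a length-controlling cutoff, the local ansatz $\xi_i=\tfrac12\xi_{i,j}$, $\xi_j=-\tfrac12\xi_{i,j}$, $\xi_k\equiv 0$ in pure two-phase regions, the role of the Herring angle condition $\vec n_{1,2}+\vec n_{2,3}+\vec n_{3,1}=0$ in solving the junction system with $|\xi_i^*|=1/\sqrt3$, patching via a partition of unity subordinate to the topological decomposition of the network, the choice of $B$ from the mean-curvature extension, and the weights $\vartheta_i$ as a truncated signed distance transported along $B$. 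You also correctly flag that the delicate part is the quantitative control of $\delta_{\text{cal}}$ and $c_{\text{len}}$ near junctions. As a stand-alone proof the proposal would be far from complete (the compatibility of the junction and two-phase models at the level of second-order expansions, and the verification of \eqref{eq:normalB}--\eqref{eq:gradBmixed} after patching, is where the bulk of the technical work in the cited papers lives), but since you, like the paper, explicitly defer those details to the references, the logical status of your argument is the same as the paper's.
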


Finally, we conclude this section by showing that the class of potentials $W$ satisfying the assumptions (A1)--(A4) is indeed sufficiently broad. In fact, given
\begin{itemize}
\item a prescribed set of $N$ minima $\alpha_i\in \mathbb{R}^{N-1}$, $1\leq i\leq N$,
\item a prescribed set of non-intersecting minimal paths $\gamma_{i,j}$, $1\leq i<j\leq N$, that meet at the $\alpha_i$ at positive angles, and
\item a potential $\tilde W:\cup_{i,j:i<j} \gamma_{i,j}\rightarrow [0,\infty)$ defined on the minimal paths $\gamma_{i,j}$ and subject to (A1) and (A3), i.\,e.\ in particular with $\int_{\gamma_{i,j}} \sqrt{2\tilde W(u)} \,d\gamma(u)=1$,
\end{itemize}
it is always possible to extend the potential $\tilde W$ to a potential $W:\mathbb{R}^{N-1}\rightarrow [0,\infty)$ that satisfies condition (A4). More precisely, to satisfy (A4) it is sufficient to require $W(u)\geq (1+M |u-\alpha_i|^{-4} \dist(u,\gamma)^4)W(P_\gamma u)$ in some neighborhood $\mathcal{U}_i$ of $\alpha_i$ (with $P_\gamma$ denoting the projection onto the nearest point among all paths $\gamma:=\cup_{j,k}\gamma_{j,k}$) as well as $W(u)\geq M \dist(u,\cup_{i<j}\gamma_{i,j})^2$ in $\mathbb{R}^{N-1}\setminus \cup_i \mathcal{U}_i$. Here, $M$ is a constant depending only on $\tilde W$, the paths $\gamma_{i,j}$, and the neighborhoods $\mathcal{U}_i$.

For the sake of simplicity, we limit ourselves in our rigorous statement to the study of potentials defined on a simplex $\triangle^{N-1}$; however, it is not too difficult to see that our construction would generalize to the aforementioned situation.
\begin{proposition} \label{prop:hpW}
	Let $N\geq 3$. Let $\triangle^{N-1}$ be an $(N-1)$-simplex with edges of unit length in $\R^{N-1}$.  
	Let $W:\triangle^{N-1} \rightarrow [0,\infty)$ be a strongly coercive symmetric $N$-well potential on the simplex $\triangle^{N-1}$ in the sense of Definition~\ref{def:3wellW} below. 
	Then, the assumptions (A1)--(A4) (see Section \ref{sec:mainresults}) are satisfied. In particular, (A4) holds true for the set of functions $\psi_i:\triangle \rightarrow [0,1]$, $1\leq i\leq N$, provided by Construction~\ref{def:approxpartition} below.
\end{proposition}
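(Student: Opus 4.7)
The strategy is to verify the four assumptions (A1)--(A4) in turn, using the symmetry of $W$ and the explicit form of the functions $\psi_i$ provided by Construction~\ref{def:approxpartition}. The first three assumptions should be essentially immediate from the definition of a strongly coercive symmetric $N$-well potential (Definition~\ref{def:3wellW}): the polynomial growth condition (A1) is built directly into the definition; for (A2), I would take $U$ to be a small convex open neighborhood of the simplex $\triangle^{N-1}$ (or the interior of $\triangle^{N-1}$ itself, if the strong coercivity hypothesis forces $W$ to grow steeply at $\partial \triangle^{N-1}$), and check that $\partial_u W$ points inward on $\partial U$ by a direct computation exploiting symmetry and coercivity; for (A3), the symmetry of $W$ and the unit-length edges of the simplex make the edges $\gamma_{i,j} = [\alpha_i,\alpha_j]$ the natural candidates for minimal geodesics, and the strong coercivity condition (penalizing deviations from the edges) gives both the energy value $1$ and uniqueness.

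The bulk of the work is (A4). The functions $\psi_i$ from Construction~\ref{def:approxpartition} are, by design, approximate phase indicators vanishing outside a neighborhood of $\alpha_i$, with $\psi_i(\alpha_j) = \delta_{ij}$ and the ``corridor'' $\mathcal{T}_{i,j}$ a tubular region around the edge $\gamma_{i,j}$. On $\mathcal{T}_{i,j}$ all $\psi_k$ with $k\notin \{i,j\}$ vanish by construction, and $\psi_{i,j} = \psi_j - \psi_i$ is essentially a Modica--Mortola-type profile along $\gamma_{i,j}$: one expects $\partial_u \psi_{i,j}$ to be tangent to $\gamma_{i,j}$ up to lower-order corrections, with magnitude saturating the Young-type inequality $|\tfrac12 \partial_u \psi_{i,j}|^2 \leq 2W$ exactly on $\gamma_{i,j}$ (this is precisely the content of (A3), since $\int_{\gamma_{i,j}} \sqrt{2W}\,d\gamma = 1$ and $\psi_{i,j}$ varies by $2$ along $\gamma_{i,j}$). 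Off $\gamma_{i,j}$, the Modica--Mortola inequality becomes strict, and the gap should be comparable to the transverse distance $\dist(u,\gamma_{i,j})^2$ times the strong coercivity constant.

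Meanwhile $\psi_0 = 1 - \sum_i \psi_i$ vanishes on $\cup_{i<j}\gamma_{i,j}$ (since there $\psi_i + \psi_j = 1$), so $|\partial_u \psi_0|$ and $|\partial_u\psi_{i,j}\cdot \partial_u\psi_0|$ are both controlled by $\dist(u,\gamma_{i,j})$. The plan is therefore to expand $W(u)$ and the three terms on the left-hand side of the (A4) inequality in the transverse distance to $\gamma_{i,j}$, check that the leading-order term matches exactly thanks to (A3), and absorb the subleading terms $(1+\delta)|\tfrac{1}{2\sqrt{3}}\partial_u \psi_0|^2 + \delta|\partial_u\psi_{i,j}\cdot\partial_u\psi_0|$ into the strong-coercivity gap in $W$ by choosing $\delta>0$ sufficiently small; the secondary bound $|\partial_u \psi_i| \leq C\sqrt{2W}$ follows from the same Modica--Mortola computation since $\psi_i = \tfrac12(1-\psi_{i,j})$ on $\mathcal{T}_{i,j}$ (for $i\in\{i,j\}$, up to the lower-order $\psi_0$ piece).

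The main obstacle is the quantitative matching on the corridors $\mathcal{T}_{i,j}$: one must show that Construction~\ref{def:approxpartition} is sharp enough that the gradient of $\psi_{i,j}$ tracks the optimal Modica--Mortola rate along $\gamma_{i,j}$ without excess slack, while simultaneously leaving a coercivity margin in $W$ large enough to absorb the $\psi_0$-contributions uniformly in $u \in \mathcal{T}_{i,j}$. A delicate point is the transition regions where two corridors $\mathcal{T}_{i,j}$ and $\mathcal{T}_{i,k}$ meet near a vertex $\alpha_i$: there both $\partial_u \psi_{i,j}$ and $\partial_u\psi_{i,k}$ are near-tangential, $\psi_0$ can pick up non-negligible gradient, and one must rely on the ``strong coercivity'' away from the edges to close the estimate. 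I would handle this by treating small neighborhoods of the vertices separately, using the polynomial-growth bound from (A1) together with the specific angle $\pi/3$ of the edges meeting at each $\alpha_i$.
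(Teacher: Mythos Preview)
Your strategy matches the paper's: (A1)--(A3) are read off from Definition~\ref{def:3wellW}, and (A4) is verified region by region on each half-corridor $\mathcal{T}^i_j$ using the explicit formulas from Construction~\ref{def:approxpartition}. The paper makes the decomposition fully explicit---three regions, namely the vertex ball $\mathcal{U}_i\cap\mathcal{T}^i_j$, the edge tube $(\mathcal{N}_{i,j}\setminus\mathcal{U}_i)\cap\mathcal{T}^i_j$, and the interior $\mathcal{T}^i_j\setminus(\mathcal{U}_i\cup\mathcal{N}_{i,j})$---and in each it computes $\partial_u\psi_{i,j}$ and $\partial_u\psi_0$ directly rather than Taylor-expanding, then invokes the matching growth condition on $W$ (respectively items~(3), (5), (6) of Definition~\ref{def:3wellW}). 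Your ``transverse-distance expansion plus coercivity gap'' is exactly Step~2, and your vertex treatment is Step~1.

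Two small corrections and one omission. First, on $\mathcal{T}_{i,j}$ the identity is $\psi_i=\tfrac12(1-\psi_{i,j}-\psi_0)$, not $\tfrac12(1-\psi_{i,j})$; the extra $\psi_0$ term is harmless but must be tracked. Second, the angle between edges at a vertex of the regular $(N-1)$-simplex is $\pi/3$ only for $N=3$; the paper never uses a specific angle but instead the quantitative angular-growth function $\omega(\beta)$ from Definition~\ref{def:3wellW}(3), which is what actually closes the vertex estimate. Finally, you should make the third region explicit: away from both $\mathcal{U}_i$ and $\mathcal{N}_{i,j}$, the function $\psi_i$ is only given as a $C^{1,1}$ extension with controlled Lipschitz constant, and the (A4) inequality holds there not by any Modica--Mortola matching but simply because $W$ is bounded below by a sufficiently large constant (condition~(6), via $C_{\text{int}}$).
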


\section{Strategy of the proof}

The key idea for our proof is the notion of relative entropy (or, more accurately, relative energy) given by
\begin{align} 
E[u_\varepsilon|\xi] &:= E[u_\varepsilon] + \sum_{i=1}^{3} \int_{\R^d} \xi_i \cdot \nabla (\psi_i  \circ u_\varepsilon ) \dx \notag \\&
=  \int_{\R^d} \frac{ \varepsilon }{2} |\nabla u_\varepsilon|^2 + \frac{1}{\varepsilon} W(u_\varepsilon)  + \sum_{i=1}^{N} \xi_i \cdot \nabla (\psi_i  \circ u_\varepsilon ) \dx. \label{eq:relativeenergy}
\end{align}
The form of the ansatz for the relative entropy is inspired by two earlier approaches:
\begin{itemize}
\item
The concept of gradient flow calibrations introduced in \cite{FischerHenselLauxSimon} by the first author, Hensel, Laux, and Simon to derive weak-strong uniqueness and stability results for distributional solutions to multiphase mean curvature flow. Gradient flow calibrations provide a lower bound of the form $-\sum_i \int \xi_i \cdot d\nabla \chi_i$ on the interface energy functional $\frac{1}{2}\sum_i \int 1 \,d|\nabla \chi_i|$, thereby facilitating a relative entropy approach to weak-strong uniqueness principles for multiphase mean curvature flow. We emphasize that gradient flow calibrations are specifically designed to handle the (singular) geometries at triple junctions in the strong solution. We refer to \cite{JerrardSmets,FischerHensel} for earlier uses of relative entropy techniques for weak-strong uniqueness for geometric evolution problems with smooth geometries (in the strong solution).
\item The relative entropy approach to the sharp-interface limit of the scalar Allen-Cahn equation by the first author, Laux, and Simon \cite{FischerLauxSimon}, relying on the Modica-Mortola trick to obtain a lower bound of the form $\int \xi\cdot \nabla \psi(u_\eps) \,dx$ for the Ginzburg-Landau energy $E[u_\eps]=\int_\Rd \frac{\eps}{2}|\nabla u_\eps|^2 + \frac{1}{\eps}W(u_\eps) \,dx$ (see also \cite{LauxLiu} for a subsequent application of the relative energy method to a problem in the context of liquid crystals).
\end{itemize}
The two key steps towards establishing our main results are as follows:
\begin{itemize}
\item Establishing a number of coercivity properties of the relative entropy $E[u_\eps|\xi]$, including for example
\begin{align}
E[u_\eps|\xi]\geq c\int \min\{\dist^2(\cdot,\cup_{i\neq j} I_{i,j}),1\} (\tfrac{\eps}{2}|\nabla u_\eps|^2+\tfrac{1}{\eps}W(u_\eps)) \,dx.
\label{SimpleCoercivity}
\end{align}
\item Deriving a Gronwall-type estimate 
for the time evolution of the relative energy of the type
\begin{align*}
\partial_t E[u_\eps|\xi]\leq C E[u_\eps|\xi].
\end{align*}
\end{itemize}
We shall illustrate this strategy by stating the main intermediate results in the present section below.

As it central for our strategy, let us first give the main argument for the coercivity of the relative entropy \eqref{eq:relativeenergy} (despite it being slightly technical). It makes use of the following elementary lemma.
\begin{lemma} \label{prop:sum}
Let $\xi_i$, $1\leq i\leq N$, be vector fields of class $C^1$ satisfying $\sum_{i=1}^N \xi_i=0$; suppose that at any point $(x,t)\in \Rd\times [0,T]$ at most three of the $\xi_i$ do not vanish.
Let $\psi_i:\mathbb{R}^{N-1}\rightarrow [0,1]$, $1 \leq i\leq N$, be functions as in assumption (A4). In particular, set $\psi_0:=1-\sum_{i=1}^N \psi_i$. Let $u_\eps\in L^\infty([0,T];\dot H^1(\Rd))$.
Defining $\psi_{i,j}:= \psi_{j}- \psi_{i}$ and $\xi_{i,j}:=\xi_i-\xi_j$, we have for any distinct $i,j,k \in \{1,\ldots,N\}$
	\begin{align} 
		\sum_{\ell=1}^{N} \xi_\ell \otimes \nabla (\psi_\ell  \circ u_\varepsilon ) 
		&= 
		- \frac12 \xi_{i,j} \otimes \nabla (\psi_{i,j} \circ u_\varepsilon)
		+ \sumk \frac12 \xi_{k} \otimes  \nabla (\psi_{0} \circ u_\varepsilon)  
		\label{eq:sum} 
	\end{align}
almost everywhere in $\{u_\eps\in \mathcal{T}_{i,j}\}$ as well as
	\begin{align}
		\sum_{\ell=1}^{N} \nabla \xi_\ell \otimes \nabla (\psi_\ell  \circ u_\varepsilon ) 
		&= 
		- \frac12 \nabla \xi_{i,j} \otimes \nabla (\psi_{i,j} \circ u_\varepsilon)
		+ \sumk \frac12 \nabla \xi_{k} \otimes  \nabla (\psi_{0} \circ u_\varepsilon),
		\label{eq:sum2}
		\\
		\sum_{\ell=1}^{N} \nabla \xi_\ell \otimes \partial_u \psi_\ell (u_\varepsilon)
		&= 
		- \frac12 \nabla \xi_{i,j} \otimes \partial_u \psi_{i,j} (u_\varepsilon)
		+ \sumk \frac12 \nabla \xi_{k} \otimes  \partial_u \psi_{0} (u_\varepsilon)
		\label{eq:sum3}
	\end{align}
almost everywhere in $\{u_\eps\in \mathcal{T}_{i,j}\}$.
\end{lemma}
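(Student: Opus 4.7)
The plan is to reduce all three identities \eqref{eq:sum}, \eqref{eq:sum2}, \eqref{eq:sum3} to the same piece of linear algebra, applied three times: with the $\xi_\ell$ (respectively $\nabla\xi_\ell$) playing one role, and the spatial gradients $\nabla(\psi_\ell \circ u_\eps)$ (respectively the phase-space gradients $\partial_u \psi_\ell(u_\eps)$) playing the other. The two structural inputs are the vanishing relation $\psi_k \equiv 0$ on $\mathcal{T}_{i,j}$ for $k \notin \{i,j\}$ coming from (A4), and the closure relation $\sum_{\ell=1}^N \xi_\ell = 0$ built into the calibration. The side assumption that at most three of the $\xi_\ell$ do not vanish at each point is in fact not needed for the algebraic identities themselves; it reflects the geometric content of the calibration (triple junctions in 2D, double bubbles in 3D) and hence the typical situation in which the lemma is applied.

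I will prove \eqref{eq:sum} first. On $\{u_\eps \in \mathcal{T}_{i,j}\}$, the vanishing $\psi_k \equiv 0$ on $\mathcal{T}_{i,j}$ for $k \notin \{i,j\}$ yields $\nabla(\psi_k \circ u_\eps) = 0$ almost everywhere on that set: on $\{u_\eps \in \operatorname{int} \mathcal{T}_{i,j}\}$ we have $\partial_u \psi_k(u_\eps) = 0$ pointwise, and on the preimage of $\partial \mathcal{T}_{i,j}$ the standard Sobolev-chain-rule argument (Stampacchia) applies. Moreover, $\psi_0 = 1 - \sum_{\ell=1}^N \psi_\ell$ reduces on $\mathcal{T}_{i,j}$ to $\psi_0 = 1 - \psi_i - \psi_j$, so that $\nabla(\psi_0 \circ u_\eps) = -\nabla(\psi_i \circ u_\eps) - \nabla(\psi_j \circ u_\eps)$ there. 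Solving the $2 \times 2$ linear system against $\nabla(\psi_{i,j} \circ u_\eps) = \nabla(\psi_j \circ u_\eps) - \nabla(\psi_i \circ u_\eps)$ yields
\begin{align*}
\nabla(\psi_i \circ u_\eps) &= -\tfrac{1}{2}\nabla(\psi_0 \circ u_\eps) - \tfrac{1}{2}\nabla(\psi_{i,j} \circ u_\eps), \\
\nabla(\psi_j \circ u_\eps) &= -\tfrac{1}{2}\nabla(\psi_0 \circ u_\eps) + \tfrac{1}{2}\nabla(\psi_{i,j} \circ u_\eps).
\end{align*}
Substituting into $\sum_{\ell=1}^N \xi_\ell \otimes \nabla(\psi_\ell \circ u_\eps) = \xi_i \otimes \nabla(\psi_i \circ u_\eps) + \xi_j \otimes \nabla(\psi_j \circ u_\eps)$ and grouping terms produces
\[
-\tfrac{1}{2}(\xi_i - \xi_j) \otimes \nabla(\psi_{i,j} \circ u_\eps) - \tfrac{1}{2}(\xi_i + \xi_j) \otimes \nabla(\psi_0 \circ u_\eps),
\]
and finally the calibration constraint $\sum_\ell \xi_\ell = 0$ converts $-\tfrac{1}{2}(\xi_i + \xi_j)$ into $\tfrac{1}{2}\sum_{k \notin \{i,j\}} \xi_k$, giving precisely \eqref{eq:sum}.

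For \eqref{eq:sum2} I run the identical argument with the vectors $\xi_\ell$ replaced by the matrices $\nabla\xi_\ell$; the only new input is $\sum_\ell \nabla\xi_\ell = 0$, which follows by spatially differentiating $\sum_\ell \xi_\ell = 0$. For \eqref{eq:sum3} the chain-rule step is skipped entirely: the relation $\partial_u \psi_k(u_\eps) = 0$ a.e.\ on $\{u_\eps \in \mathcal{T}_{i,j}\}$ for $k \notin \{i,j\}$ follows directly from $\psi_k \equiv 0$ on $\mathcal{T}_{i,j}$, and $\partial_u \psi_0 = -\partial_u \psi_i - \partial_u \psi_j$ on $\mathcal{T}_{i,j}$ follows by differentiating $\psi_0 = 1 - \sum_\ell \psi_\ell$ directly in phase space. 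The same linear solve expresses $\partial_u \psi_i(u_\eps)$ and $\partial_u \psi_j(u_\eps)$ in terms of $\partial_u \psi_{i,j}(u_\eps)$ and $\partial_u \psi_0(u_\eps)$, and the same regrouping via $\sum_\ell \nabla \xi_\ell = 0$ closes the identity.

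There is essentially no obstacle here: the argument is purely algebraic, and the only technical point is the measure-theoretic justification for the a.e. vanishing of $\nabla(\psi_k \circ u_\eps)$ and $\partial_u \psi_k(u_\eps)$ on $\{u_\eps \in \mathcal{T}_{i,j}\}$ for $k \notin \{i,j\}$, which is standard. The real content of the lemma is not the proof but the identification of the correct right-hand side: by trading $\nabla(\psi_i \circ u_\eps)$ and $\nabla(\psi_j \circ u_\eps)$ for $\nabla(\psi_{i,j} \circ u_\eps)$ and $\nabla(\psi_0 \circ u_\eps)$, the sum $\sum_\ell \xi_\ell \otimes \nabla(\psi_\ell \circ u_\eps)$ is rewritten in a form in which the first term can be controlled by the Modica--Mortola-type bound in (A4) applied to the pair $(i,j)$, while the second term is tangential to the interface $I_{i,j}$ up to a controlled error — this is exactly what drives the coercivity estimate \eqref{SimpleCoercivity} stated in the outline of the strategy.
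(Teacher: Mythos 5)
Your proof is correct and follows the same approach as the paper: the paper's ``adding zeros'' step is precisely your $2\times 2$ linear solve trading $(\nabla(\psi_i\circ u_\eps),\nabla(\psi_j\circ u_\eps))$ for $(\nabla(\psi_{i,j}\circ u_\eps),\nabla(\psi_0\circ u_\eps))$, followed by the closure relation $\sum_\ell\xi_\ell=0$ (and its derivative $\sum_\ell\nabla\xi_\ell=0$ for \eqref{eq:sum2}--\eqref{eq:sum3}). One small imprecision: for \eqref{eq:sum3} the statement $\partial_u\psi_k(u_\eps)=0$ a.e.\ on $\{u_\eps\in\mathcal{T}_{i,j}\}$ does not follow ``directly'' from $\psi_k\equiv 0$ on $\mathcal{T}_{i,j}$ alone, but from that together with $\psi_k\geq 0$ (so every point of $\mathcal{T}_{i,j}$ is a global minimum of the $C^1$ function $\psi_k$, whence $\partial_u\psi_k$ vanishes there) --- a point the paper likewise leaves implicit.
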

\begin{proof}
	By adding zeros, using the definitions $\xi_{i,j}:=\xi_i-\xi_j$ and $\psi_{i,j}:=\psi_j-\psi_i$, we obtain
    \begin{align*}
    \sum_{\ell=1}^{N}  \xi_\ell \cdot \nabla (\psi_\ell  \circ u_\varepsilon ) 
    = &
    \,- \frac12 \xi_{i,j} \cdot \nabla (\psi_{i,j} \circ u_\varepsilon) 
    + \frac12 \xi_{i} \cdot \nabla ((\psi_{i}+ \psi_{j}) \circ u_\varepsilon) \\
    &+ \frac12 \xi_{j} \cdot \nabla ((\psi_{i}+ \psi_{j}) \circ u_\varepsilon) 
    + \sum_{\substack{k=1 \\k \notin\{i,j\} }}^{N} \xi_{k} \cdot \nabla (\psi_k \circ u_\varepsilon),
    \end{align*}	
almost everywhere in $\R^d \times (0,T)$. The equation \eqref{eq:sum} now follows by exploiting that $\partial_u \psi_k=0$ on $\mathcal{T}_{i,j}$ for $k\notin \{i,j\}$, inserting the definition of $\psi_0$, and using $\sum_{\ell=1}^N \xi_\ell =0$.
The proof of the other properties is analogous.
\end{proof}
With the previous lemma and our assumptions (A1)--(A4), it becomes rather straightforward to establish coercivity of our relative energy:
Observe that we may compute for $(x,t)$ with $u_\eps(x,t) \in \mathcal{T}_{i,j}$ 
\begin{align} \label{eq:startcoer0}
&\frac{\vareps}{2}|\nabla u_\vareps|^2
+\frac{1}{\vareps} W(u_\vareps)
+\sum^N_{\ell=1} \xi_\ell \cdot \nabla (\psi_\ell \circ u_\vareps)
\notag\\&
=
\frac{\vareps}{2}|\nabla u_\vareps|^2
+\frac{1}{\vareps} W(u_\vareps)
-\bigg(\tfrac{1}{2}\partial_u \psi_{i,j} (u_\vareps)\otimes \xi_{i,j} 
-  \sumk  \tfrac1{2\sqrt{3}} \partial_u \psi_0 (u_\vareps)
\otimes \sqrt{3}\xi_k 
\bigg) :\nabla u_\vareps
\notag\\&
=
\frac{1}{2}\bigg|\sqrt{\vareps}\nabla u_\vareps - \frac{1}{\sqrt{\vareps}} \bigg(\tfrac{1}{2}\partial_u \psi_{i,j} (u_\vareps)\otimes \xi_{i,j} 
- \sumk \tfrac1{2\sqrt{3}} \partial_u \psi_0 (u_\vareps)
\otimes \sqrt{3}\xi_k 
\bigg) \bigg|^2
\notag\\&~~~+
\frac{1}{2\vareps} \Bigg[ 2W(u_\vareps)-\bigg|\tfrac{1}{2}\partial_u \psi_{i,j}(u_\vareps) \otimes \xi_{i,j}
- \sumk \tfrac1{2\sqrt{3}} \partial_u \psi_0 (u_\vareps) \otimes \sqrt{3}\xi_k 
\bigg|^2\Bigg]
\end{align}
due to the fact that $\psi_k \equiv 0$ on $\mathcal{T}_{i,j}$ for any $k \in \{1,...,N\} \setminus \{i,j\}$. 
This will be the starting point to prove the coercivity properties satisfied by the relative energy functional \eqref{eq:relativeenergy}; note in particular that
\begin{align*}
&\bigg|
\tfrac{1}{2}\partial_u \psi_{i,j} (u_\vareps)\otimes \xi_{i,j}
-  \sumk \tfrac1{2\sqrt{3}} \partial_u \psi_0 (u_\vareps) \otimes \sqrt{3}\xi_k 
 \bigg|^2
\\&
= |\xi_{i,j}|^2 \big|\tfrac{1}{2}\partial_u \psi_{i,j} (u_\vareps)\big|^2
+ \sumk | \sqrt{3}\xi_{k}|^2 \big|\tfrac1{2\sqrt{3}} \partial_u \psi_0 (u_\vareps)  \big|^2\\
&~~~- \sumk \frac{1}{2} (\xi_{i,j}\cdot \xi_k) \partial_u \psi_{i,j} (u_\vareps)\cdot \partial_u \psi_0 (u_\vareps).
\end{align*}
Using our assumption (A4) and the properties of the gradient flow calibration $|\xi_i|\leq \frac{1}{\sqrt{3}}$, $|\xi_{i,j}|\leq \max\{1-c_{len}\dist^2(\cdot,I_{i,j}),0\}$, and $|\sqrt{3} \xi_{i,j}\cdot \xi_k|\leq \min\{c_\perp \dist(\cdot,I_{i,j}), \sqrt{\delta_{cal}}\}$ for pairwise distinct $i,j,k$, this establishes a first coercivity bound like \eqref{SimpleCoercivity}.
Going substantially beyond this simple estimate, we shall see that in fact we have the following coercivity properties.
\begin{proposition} \label{prop:maincoer}
Let $W$ and $\psi_i$ be functions subject to assumption (A4).
Let $\xi_i$, $1\leq i\leq N$, be any collection of $C^1$ vector fields satisfying $\sum_{i=1}^N \xi_i=0$, $|\sqrt{3}\xi_i|\leq 1$ for all $i$, as well as with the notation $\xi_{i,j}:=\xi_i-\xi_j$
\begin{align}
\label{AssumptionLengthXi}
|\xi_{i,j}|^2 + \frac1{\delta_{\text{cal}}}  \sum^N_{\substack{ k=1 : k \notin \{i,j\}}}  |\sqrt{3} \xi_{i,j} \cdot \xi_{k}|^2 \leq 1.
\end{align}
Furthermore, suppose that at each point at most three of the vector fields $\xi_i$ do not vanish.
For any function $u_\eps\in \dot H^1(\Rd;\overline{U})$ with $E[u_\eps]<\infty$, we then have the estimates
\begin{subequations}
		\begin{align}
	\label{eq:coeren}
	\int_{\R^d} \left(\sqrt{\vareps} |\nabla u_\vareps| - \frac{1}{\sqrt{\vareps} } \sqrt{2W(u_\vareps)} \right)^2 \dx 
		&\leq C E[u_\varepsilon|\xi] \,,
\\ \label{eq:coernormalpsi}
 \sumij	\int_{\R^d}    \left|\frac{\nabla (\psi_{i,j}  \circ u_\vareps)}{|\nabla (\psi_{i,j}  \circ u_\vareps)|} - \xi_{i,j} \right|^2
	|\nabla (\psi_{i,j} \circ u_\vareps)|  \chit (u_\vareps) \dx
&	\leq C E[u_\varepsilon|\xi] ,
	\\ \label{eq:coerdistpsi}
\sumij    \int_{\R^d} \min \{ \dist^2(x, I_{i,j}) ,1\} |\nabla (\psi_{i,j} \circ u_\vareps)|  \chi_{\mathcal{T}_{i,j}}(u_\vareps) \dx 
&	\leq C E[u_\varepsilon|\xi] ,
			\\ \label{eq:coerdist}
		\sumij   \int_{\R^d} \min \{ \dist^2(x, I_{i,j}) ,1\} \left(\frac{\varepsilon}{2} | \nabla u_\vareps|^2 + \frac1\varepsilon W(u_\vareps)\right)\chit (u_\vareps) \dx 
		&	\leq C E[u_\varepsilon|\xi] \,,\\
		 \label{eq:coertangentgradu}
		 \sumij	 \int_{\R^d}   \varepsilon | (\Id-\xi_{i,j} \otimes \xi_{i,j})  \nabla u_\vareps^\mathsf{T} |^2 \chit (u_\vareps) \dx 
		 & \leq C E[u_\varepsilon|\xi].
\end{align}
\end{subequations}
\end{proposition}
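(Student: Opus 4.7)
The plan is to exploit the pointwise algebraic splitting \eqref{eq:startcoer0}, which on $\{u_\eps\in\mathcal{T}_{i,j}\}$ identifies the integrand of $E[u_\eps|\xi]$ as
\begin{equation*}
\tfrac12\bigl|\sqrt{\eps}\,\nabla u_\eps-\tfrac1{\sqrt{\eps}}A_{i,j}\bigr|^2+\tfrac1{2\eps}\bigl(2W(u_\eps)-|A_{i,j}|^2\bigr),
\end{equation*}
with $A_{i,j}:=\tfrac12\partial_u\psi_{i,j}(u_\eps)\otimes\xi_{i,j}-\sum_{k\notin\{i,j\}}\tfrac1{2\sqrt{3}}\partial_u\psi_0(u_\eps)\otimes\sqrt{3}\,\xi_k$. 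The first step is to prove that the second summand is nonnegative with quantitative slack. Expanding $|A_{i,j}|^2$ as in the excerpt and using that at each point at most three of the $\xi_\ell$ are nonvanishing (so only a single $k\notin\{i,j\}$ contributes), one bounds the diagonal terms via $|\xi_{i,j}|\leq 1$ and $|\sqrt{3}\,\xi_k|\leq 1$ and the cross term via the length constraint $|\sqrt{3}\,\xi_{i,j}\cdot\xi_k|\leq\sqrt{\delta_{\text{cal}}(1-|\xi_{i,j}|^2)}$ from \eqref{AssumptionLengthXi} combined with Young's inequality. Assumption (A4) then yields the quantitative slack
\begin{equation}\label{eq:coerSlack}
2W(u_\eps)-|A_{i,j}|^2\geq c\Bigl[(1-|\xi_{i,j}|^2)\bigl|\tfrac12\partial_u\psi_{i,j}(u_\eps)\bigr|^2+\delta\bigl|\tfrac1{2\sqrt{3}}\partial_u\psi_0(u_\eps)\bigr|^2\Bigr]
\end{equation}
for some $c=c(\delta,\delta_{\text{cal}})>0$; together with the Modica--Mortola square this provides the reservoir of positivity driving all five estimates.

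Estimate \eqref{eq:coeren} would then follow from the pointwise identity
\begin{equation*}
\bigl(\sqrt{\eps}\,|\nabla u_\eps|-\tfrac1{\sqrt{\eps}}\sqrt{2W(u_\eps)}\bigr)^2=\bigl|\sqrt{\eps}\,\nabla u_\eps-\tfrac1{\sqrt{\eps}}A_{i,j}\bigr|^2+2\bigl(\nabla u_\eps:A_{i,j}-|\nabla u_\eps|\sqrt{2W}\bigr)+\tfrac1\eps\bigl(2W-|A_{i,j}|^2\bigr),
\end{equation*}
since by Cauchy--Schwarz and $|A_{i,j}|\leq\sqrt{2W}$ the middle bracket is nonpositive, so the left-hand side is pointwise at most twice the integrand of $E[u_\eps|\xi]$. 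For \eqref{eq:coernormalpsi} and \eqref{eq:coerdistpsi} the key identity is $\tfrac12|\nu_{i,j}-\xi_{i,j}|^2+\tfrac12(1-|\xi_{i,j}|^2)=1-\xi_{i,j}\cdot\nu_{i,j}$ with $\nu_{i,j}:=\nabla(\psi_{i,j}\circ u_\eps)/|\nabla(\psi_{i,j}\circ u_\eps)|$. Multiplying by $\tfrac12|\nabla(\psi_{i,j}\circ u_\eps)|$, using Lemma \ref{prop:sum} to identify $-\tfrac12\xi_{i,j}\cdot\nabla(\psi_{i,j}\circ u_\eps)$ as the leading part of $\sum_\ell\xi_\ell\cdot\nabla(\psi_\ell\circ u_\eps)$, bounding $\tfrac12|\nabla(\psi_{i,j}\circ u_\eps)|\leq|\nabla u_\eps|\sqrt{2W}\leq\tfrac\eps2|\nabla u_\eps|^2+\tfrac1\eps W$ by (A4) and AM--GM, and absorbing the remaining $\nabla(\psi_0\circ u_\eps)$ correction via $|\sqrt{3}\,\xi_k|\leq 1$, $|\tfrac1{2\sqrt{3}}\partial_u\psi_0|\leq\sqrt{2W/(1+\delta)}$ and the slack \eqref{eq:coerSlack}, one arrives at
\begin{equation*}
c\bigl[|\nu_{i,j}-\xi_{i,j}|^2+(1-|\xi_{i,j}|^2)\bigr]\,|\nabla(\psi_{i,j}\circ u_\eps)|\leq(\text{integrand of }E[u_\eps|\xi]),
\end{equation*}
which gives \eqref{eq:coernormalpsi} immediately and \eqref{eq:coerdistpsi} via \eqref{eq:lenghtconstrxiij} (using $|\xi_{i,j}|^2=0$ whenever $\dist^2\geq 1/c_{\text{len}}$, so $1-|\xi_{i,j}|^2\geq c\min\{\dist^2(\cdot,I_{i,j}),1\}$).

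For \eqref{eq:coerdist}, starting from $\eps|\nabla u_\eps|^2\leq 2|\sqrt{\eps}\,\nabla u_\eps-\tfrac1{\sqrt{\eps}}A_{i,j}|^2+\tfrac2\eps|A_{i,j}|^2$, I would bound $\int\min\{\dist^2,1\}\,\eps|\nabla u_\eps|^2\chi_{\mathcal{T}_{i,j}}(u_\eps)\,dx$ by the Modica--Mortola square (using $\min\{\dist^2,1\}\leq 1$) plus $\int\min\{\dist^2,1\}\tfrac1\eps|A_{i,j}|^2\chi_{\mathcal{T}_{i,j}}(u_\eps)\,dx$; after replacing $\min\{\dist^2,1\}\leq C(1-|\xi_{i,j}|^2)$ via \eqref{eq:lenghtconstrxiij} and expanding $|A_{i,j}|^2$, this reduces to $\int\tfrac1\eps(1-|\xi_{i,j}|^2)|\tfrac12\partial_u\psi_{i,j}|^2\,dx+\int\tfrac1\eps|\tfrac1{2\sqrt{3}}\partial_u\psi_0|^2\,dx$, both controlled by \eqref{eq:coerSlack}. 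The term $\int\min\{\dist^2,1\}\tfrac1\eps W\chi_{\mathcal{T}_{i,j}}(u_\eps)\,dx$ is handled analogously by writing $\tfrac1\eps W=\tfrac1{2\eps}(2W-|A_{i,j}|^2)+\tfrac1{2\eps}|A_{i,j}|^2$. Finally, for \eqref{eq:coertangentgradu}, the triangle inequality
\begin{equation*}
\eps|\nabla u_\eps(\Id-\xi_{i,j}\otimes\xi_{i,j})|^2\leq 2\bigl|\sqrt{\eps}\,\nabla u_\eps-\tfrac1{\sqrt{\eps}}A_{i,j}\bigr|^2+\tfrac2\eps\bigl|A_{i,j}(\Id-\xi_{i,j}\otimes\xi_{i,j})\bigr|^2
\end{equation*}
reduces the task to bounding $\int\tfrac1\eps|A_{i,j}(\Id-\xi_{i,j}\otimes\xi_{i,j})|^2$; the explicit formula $A_{i,j}(\Id-\xi_{i,j}\otimes\xi_{i,j})=\tfrac12\partial_u\psi_{i,j}\otimes(1-|\xi_{i,j}|^2)\xi_{i,j}-\sum_k\tfrac1{2\sqrt{3}}\partial_u\psi_0\otimes[\sqrt{3}\,\xi_k-(\sqrt{3}\,\xi_k\cdot\xi_{i,j})\xi_{i,j}]$ shows that this too is absorbed by \eqref{eq:coerSlack} together with the length constraint.

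The main technical obstacle will be the quantitative slack \eqref{eq:coerSlack} itself: the cross term $\sum_k(\xi_{i,j}\cdot\xi_k)\,\partial_u\psi_{i,j}\cdot\partial_u\psi_0$ couples the ``$\psi_{i,j}$ direction'' and the ``$\psi_0$ direction'' of the potential, and its absorption requires a delicate interplay between the $\delta$-slack built into (A4), the length constraint \eqref{AssumptionLengthXi}, and the fact that at each point only one $k\notin\{i,j\}$ is relevant. Once \eqref{eq:coerSlack} is in hand, all five estimates reduce to algebraic identities and chain-rule computations, combined with careful bookkeeping of which reservoir---the Modica--Mortola square or the slack $2W-|A_{i,j}|^2$---absorbs each remainder.
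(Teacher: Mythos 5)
Your proposal follows the same core strategy as the paper---starting from the algebraic split \eqref{eq:startcoer0} and exploiting (A4)---but reorganizes it around a single pointwise coercivity estimate, which is worth comparing. The paper's Proposition~\ref{prop:coerpartialpsi} only extracts from the second square in \eqref{eq:startcoer0} the pieces proportional to $|\partial_u\psi_0|^2$ and $|\partial_u\psi_0\cdot\partial_u\psi_{i,j}|$, and then recovers the $(1{-}|\xi_{i,j}|^2)$-weighted coercivity indirectly via the surface-tension decomposition $1-\xi_{i,j}\cdot\nu_{i,j}=\tfrac12|\nu_{i,j}-\xi_{i,j}|^2+\tfrac12(1-|\xi_{i,j}|^2)$ multiplied against $|\nabla(\psi_{i,j}\circ u_\eps)|$. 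Your slack \eqref{eq:coerSlack} is a genuinely pointwise and slightly stronger statement: via Young with the sharpened bound $|\sqrt3\,\xi_{i,j}\cdot\xi_k|\leq\sqrt{\delta_{\text{cal}}(1-|\xi_{i,j}|^2)}$ from \eqref{AssumptionLengthXi} and a parameter $\mu\in(\delta_{\text{cal}}/\delta,1)$, one indeed gets $2W-|A_{i,j}|^2\geq(1-\mu)(1-|\xi_{i,j}|^2)|\tfrac12\partial_u\psi_{i,j}|^2+(\delta-\tfrac{\delta_{\text{cal}}}{\mu})|\tfrac1{2\sqrt3}\partial_u\psi_0|^2+\delta|\partial_u\psi_{i,j}\cdot\partial_u\psi_0|$. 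Your derivation of \eqref{eq:coeren} via the pointwise identity and Cauchy--Schwarz on the middle bracket (using $|A_{i,j}|\leq\sqrt{2W}$) is cleaner and more direct than the paper's chain through \eqref{eq:coergradpsik}. The derivations of \eqref{eq:coerdist} and \eqref{eq:coertangentgradu} from the triangle inequality and your slack also work.

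Two cautionary remarks. First, you need $\delta_{\text{cal}}<\delta$ for the Young parameter to exist; the paper's Proposition~\ref{prop:coerpartialpsi} needs this too, and both arguments rely silently on the freedom to choose $\delta_{\text{cal}}$ small in the calibration existence theorem. Second, and more substantively: as stated, your slack \eqref{eq:coerSlack} drops the nonnegative term $\delta|\partial_u\psi_{i,j}\cdot\partial_u\psi_0|$. For \eqref{eq:coernormalpsi} and \eqref{eq:coerdistpsi} you must absorb the $\nabla(\psi_0\circ u_\eps)$ correction that appears when you trade $-\tfrac12\xi_{i,j}\cdot\nabla(\psi_{i,j}\circ u_\eps)$ for $\sum_\ell\xi_\ell\cdot\nabla(\psi_\ell\circ u_\eps)$ via Lemma~\ref{prop:sum}; the standard expansion $\nabla(\psi_0\circ u_\eps)=\tfrac1{\sqrt\eps}\partial_u\psi_0\cdot[\sqrt\eps\nabla u_\eps-\tfrac1{\sqrt\eps}A_{i,j}]+\tfrac1{2\eps}(\partial_u\psi_0\cdot\partial_u\psi_{i,j})\xi_{i,j}-\tfrac1\eps\sum_k\tfrac1{2\sqrt3}|\partial_u\psi_0|^2\sqrt3\xi_k$ produces the cross term $\tfrac1{2\eps}|\partial_u\psi_0\cdot\partial_u\psi_{i,j}|$, which is not controlled by \eqref{eq:coerSlack} alone (Cauchy--Schwarz would require $\int\tfrac1\eps|\partial_u\psi_{i,j}|^2\lesssim E[u_\eps|\xi]$, which is false in general). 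You should therefore retain the $\delta|\partial_u\psi_{i,j}\cdot\partial_u\psi_0|$ contribution in your slack, at which point it exactly reproduces the information in the paper's \eqref{EstimateCoerIntermediate} and everything closes.
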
 
These coercivity estimates will be derived as a consequence of the computation \eqref{eq:startcoer0} and the following coercivity properties.
\begin{proposition}  \label{prop:coerpartialpsi}
Let $W$ and $\psi_i$ be functions subject to assumption (A4). Let $\xi_i$, $1\leq i\leq N$, be as in Proposition~\ref{prop:maincoer}.
For any function $u_\eps\in \dot H^1(\Rd;\overline{U})$ with $E[u_\eps]<\infty$, we then have the estimates
\begin{subequations}
	\begin{align} 
	\label{eq:coerpartialpsik}
	\sumij  \int_{\R^d} \frac{1}{ \vareps}  \left| \partial_u \psi_{0} (u_\vareps)  \right|^2   \chit (u_\vareps) \dx 
	&\leq C E[u_\varepsilon|\xi] ,
\\ 
\label{eq:coerpartialpsiijk}
\sumij	\int_{\R^d} \frac{1}{\vareps}  \left| \partial_u \psi_{i,j} (u_\vareps)  \cdot   \partial_u \psi_{0} (u_\vareps) \right|  \chit(u_\vareps) \dx 
	&\leq C E[u_\varepsilon|\xi] \,,
	\\
	\label{eq:coergradpsik}
\sumij  \int_{\R^d}    | \nabla ( \psi_{0} \circ u_\vareps) | \chit(u_\vareps) \dx 
	&\leq C E[u_\varepsilon|\xi] \,.
	\end{align}	
\end{subequations}
\end{proposition}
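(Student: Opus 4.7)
The proof starts from the pointwise identity \eqref{eq:startcoer0}, which on $\{u_\vareps\in\mathcal{T}_{i,j}\}$ decomposes the relative energy integrand as $T_1+T_2$, with
\begin{equation*}
T_1 := \tfrac12 \bigl|\sqrt{\vareps}\nabla u_\vareps - \tfrac{1}{\sqrt\vareps}(\cdots)\bigr|^2, \qquad
T_2 := \tfrac{1}{2\vareps}\bigl(2W(u_\vareps) - |(\cdots)|^2\bigr),
\end{equation*}
where $(\cdots)$ denotes the matrix $\tfrac12 \partial_u\psi_{i,j}(u_\vareps)\otimes\xi_{i,j} - \sumk\tfrac{1}{2\sqrt3}\partial_u\psi_0(u_\vareps)\otimes\sqrt3\xi_k$. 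The hypothesis that at most three of the $\xi_\ell$ are non-zero at each spatial point ensures that among the $\xi_k$ with $k\notin\{i,j\}$ at most one is non-zero there, so the cross terms $\xi_k\cdot\xi_{k'}$ drop out of $|(\cdots)|^2$ and the explicit expansion from the excerpt holds. Subtracting it from $2W(u_\vareps)$, applying assumption (A4), and using $\sumk|\sqrt3\xi_k|^2\leq 1$ (only one term contributes and each has $|\sqrt3\xi_k|\leq 1$), one obtains the pointwise lower bound
\begin{align*}
T_2 &\geq \tfrac{1}{2\vareps}\Bigl[(1-|\xi_{i,j}|^2)\bigl|\tfrac12\partial_u\psi_{i,j}\bigr|^2 + \delta\bigl|\tfrac{1}{2\sqrt3}\partial_u\psi_0\bigr|^2 \\
&\quad + \delta|\partial_u\psi_{i,j}\cdot\partial_u\psi_0| + \sumk\tfrac12(\xi_{i,j}\cdot\xi_k)(\partial_u\psi_{i,j}\cdot\partial_u\psi_0)\Bigr]
\end{align*}
on $\{u_\vareps\in\mathcal{T}_{i,j}\}$, exhibiting the three coercive terms required for (a), (b), and (after further work) (c).

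For \eqref{eq:coerpartialpsik} and \eqref{eq:coerpartialpsiijk}, the signed cross term is absorbed by means of \eqref{AssumptionLengthXi}, which implies $|\sqrt3\xi_{i,j}\cdot\xi_k|\leq\sqrt{\delta_{\text{cal}}}$. Fixing the calibration parameter $\delta_{\text{cal}}$ sufficiently small in terms of $\delta$ and $N$ (harmless, since the existence theorem for gradient flow calibrations grants any $\delta_{\text{cal}}>0$), the residual is pointwise bounded by $\tfrac{\delta}{2}|\partial_u\psi_{i,j}\cdot\partial_u\psi_0|$, leaving a strictly positive effective coefficient. Integrating the resulting inequality against $\chit(u_\vareps)$ and summing over $i<j$, together with $\int T_2\,\chit(u_\vareps)\dx \leq \int(T_1+T_2)\,\chit(u_\vareps)\dx \leq E[u_\vareps|\xi]$, yields both estimates directly.

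The subtlest step is \eqref{eq:coergradpsik}. The naive Young's inequality applied to $|\nabla(\psi_0\circ u_\vareps)|\leq|\partial_u\psi_0|\,|\nabla u_\vareps|$ produces $\int\vareps|\nabla u_\vareps|^2\,\chit(u_\vareps)\dx$, which is \emph{not} controlled by $E[u_\vareps|\xi]$ (the Modica-Mortola density is of order $1/\vareps$ in an $O(\vareps)$-thin transition layer, yielding an $O(1)$ integral, whereas $E[u_\vareps|\xi]$ may be $O(\vareps)$ for well-prepared data); this is the main obstacle. The remedy is to project $\nabla u_\vareps$ onto the Modica-Mortola profile direction, writing the chain-rule identity as
\begin{equation*}
\nabla(\psi_0\circ u_\vareps) = \tfrac{1}{\vareps}[\partial_u\psi_0(u_\vareps)]^\mathsf{T}(\cdots) + [\partial_u\psi_0(u_\vareps)]^\mathsf{T}\bigl(\nabla u_\vareps - \tfrac{1}{\vareps}(\cdots)\bigr).
\end{equation*}
The first (``profile'') contribution evaluates explicitly to $\tfrac{1}{2\vareps}(\partial_u\psi_0\cdot\partial_u\psi_{i,j})\xi_{i,j} - \sumk \tfrac{1}{2\vareps}|\partial_u\psi_0|^2\xi_k$, whose $\R^d$-norm is at most $\tfrac{1}{2\vareps}|\partial_u\psi_0\cdot\partial_u\psi_{i,j}| + \tfrac{1}{2\sqrt3\vareps}|\partial_u\psi_0|^2$ via $|\xi_{i,j}|\leq 1$ and $\sumk|\xi_k|\leq\tfrac{1}{\sqrt3}$, and is therefore integrably bounded by the already-established \eqref{eq:coerpartialpsik}--\eqref{eq:coerpartialpsiijk}. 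The second (``deviation'') contribution has pointwise magnitude at most $\tfrac{\sqrt2}{\sqrt\vareps}|\partial_u\psi_0|\sqrt{T_1}$, and a single Cauchy-Schwarz step
\begin{equation*}
\int \tfrac{|\partial_u\psi_0|}{\sqrt\vareps}\sqrt{T_1}\,\chit(u_\vareps)\dx \leq \Bigl(\int \tfrac{|\partial_u\psi_0|^2}{\vareps}\,\chit(u_\vareps)\dx\Bigr)^{1/2}\Bigl(\int T_1\,\chit(u_\vareps)\dx\Bigr)^{1/2}
\end{equation*}
combined with \eqref{eq:coerpartialpsik} and $\int T_1\,\chit(u_\vareps)\dx\leq E[u_\vareps|\xi]$ delivers \eqref{eq:coergradpsik}. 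This profile-subtraction manoeuvre — replacing $\nabla u_\vareps$ by its Modica-Mortola ansatz and collecting the error in $T_1$ — is what renders the coercivity of $E[u_\vareps|\xi]$ strong enough to dominate $|\nabla(\psi_0\circ u_\vareps)|$ in the absence of any direct control on $\vareps|\nabla u_\vareps|^2$.
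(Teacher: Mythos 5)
Your proof is correct and follows essentially the same route as the paper: both start from the pointwise identity \eqref{eq:startcoer0}, extract the coercive terms $|\partial_u\psi_0|^2$ and $|\partial_u\psi_{i,j}\cdot\partial_u\psi_0|$ from $T_2$ using (A4) and the smallness of $\delta_{\text{cal}}$, and then handle \eqref{eq:coergradpsik} by subtracting the Modica--Mortola ansatz from $\nabla u_\vareps$ and collecting the error inside $T_1$. The only (cosmetic) differences are that you absorb the cross term $\sumk\tfrac12(\xi_{i,j}\cdot\xi_k)(\partial_u\psi_{i,j}\cdot\partial_u\psi_0)$ directly through $|\sqrt3\xi_{i,j}\cdot\xi_k|\leq\sqrt{\delta_{\text{cal}}}$, whereas the paper first splits it by Young's inequality so as to combine one half with $|\xi_{i,j}|^2$ via the hypothesis \eqref{AssumptionLengthXi}; and that for \eqref{eq:coergradpsik} you use Cauchy--Schwarz at the integral level on the deviation term while the paper applies Young pointwise. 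Both variants are fully equivalent given the freedom to choose $\delta_{\text{cal}}$ small relative to the $\delta$ of (A4).
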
 

To introduce a proxy at the level of the Allen-Cahn equation for the limiting mean curvature (or, more precisely, a quantity $\h_\vareps$ such that $|\h_\vareps|^2$ is a proxy for the dissipation in mean curvature flow), we introduce the abbreviation
\begin{equation} \label{eq:h}
	\h_\vareps := - \vareps \left(\Delta u_\vareps - \frac{1}{\vareps^2} \partial_u W(u_\vareps)\right) \cdot \frac{\nabla u_\vareps}{|\nabla u_\vareps|}.
\end{equation}

The key step in our proof is to establish the following estimate for the relative energy using a Gronwall-type argument.
\begin{theorem}[Relative energy inequality]
\label{TheoremRelativeEnergyInequality}
Let $\bar{\chi} = (\bar{\chi}_1,\ldots, \bar{\chi}_N)$ be a smoothly evolving partition of $\Rd$; let $((\xi_i)_i,B)$ be an associated gradient flow calibration in the sense of Definition~\ref{DefinitionGradFlowCalibration}.
Let $W$ be a potential subject to assumptions (A1)--(A4).
Let $u_\varepsilon$ be a bounded solution to the vector-valued Allen-Cahn equation \eqref{eq:AC} with initial data $u_\varepsilon (\cdot,0) \in \dot H^1(\Rd;\overline{U})$ with finite energy $E[u_\varepsilon(\cdot,0)]<\infty$. Then for any $t \in [0, T ]$ the estimate
\begin{align} \label{ineq:relativeenergy}
	\ddt &E[u_\varepsilon|\xi] +\ \sumij \int_{\R^d} \frac{1}{2 \vareps}|\h_\vareps - \vareps  (B \cdot {\xi}_{i,j}){\xi}_{i,j} |\nabla u_\vareps| |^2  \chi_{\mathcal{T}_{i,j}}(u_\vareps)\dx \notag\\
	& +\int_{\R^d} \frac{1}{2 \vareps} \left( \Big|\vareps \Delta u_\vareps - \frac{1}{\vareps} \partial_u W(u_\vareps)\Big|^2 - |\h_\vareps|^2\right) \dx \notag\\
	& + \int_{\R^d} \frac{1}{4 \vareps}  \Big| \Big(\vareps \Delta u_\vareps - \frac{1}{\vareps} \partial_u W(u_\vareps)\Big) + \sum_{i=1}^N (\nabla \cdot \xi_i) \partial_u \psi_i (u_\vareps) \Big|^2 \dx \notag\\
	& \leq C(d,\bar \chi) E[u_\varepsilon|\xi] 
\end{align}
holds true, with $\h_\eps$ as defined in \eqref{eq:h} and $E[u_\eps|\xi]$ as defined in \eqref{eq:relativeenergyfirst}.
\end{theorem}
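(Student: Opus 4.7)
The strategy is a one-step Gronwall argument: differentiate $E[u_\eps|\xi]$ along the Allen--Cahn flow, complete the square to extract the three dissipation quantities on the LHS of \eqref{ineq:relativeenergy}, and absorb what remains into $CE[u_\eps|\xi]$ using the calibration identities \eqref{eq:timeevxiij}--\eqref{eq:gradBmixed} together with the coercivity estimates of Propositions~\ref{prop:maincoer} and \ref{prop:coerpartialpsi}. The mechanism parallels the scalar calculation of \cite{FischerLauxSimon}, with the multiphase primitives $\psi_{i,j}=\psi_j-\psi_i$ playing the role of the Modica--Mortola primitive $\psi(u)=\int_0^u\sqrt{2W(s)}\,ds$; the presence of several calibrations $\xi_i$ then forces a localization of the computation on each set $\mathcal{T}_{i,j}$ supplied by assumption~(A4).

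Writing $L:=-\eps\Delta u_\eps+\tfrac1\eps\partial_u W(u_\eps)=-\eps\partial_t u_\eps$ and $X:=\sum_{i=1}^N(\nabla\cdot\xi_i)\partial_u\psi_i(u_\eps)$, testing the Allen--Cahn equation by $\partial_t u_\eps$ gives $\frac{d}{dt}E[u_\eps]=-\tfrac1\eps\int|L|^2\dx$, and a spatial integration by parts in the relative-energy term yields
\begin{equation*}
\frac{d}{dt}E[u_\eps|\xi]=-\tfrac1\eps\int|L|^2\dx+\sum_{i=1}^N\int\partial_t\xi_i\cdot\nabla(\psi_i\circ u_\eps)\dx+\tfrac1\eps\int L\cdot X\dx.
\end{equation*}
The algebraic identity $-\tfrac1\eps|L|^2+\tfrac1\eps L\cdot X=-\tfrac{1}{2\eps}(|L|^2-|\h_\eps|^2)-\tfrac{1}{2\eps}|\h_\eps|^2-\tfrac{1}{2\eps}|L-X|^2+\tfrac{1}{2\eps}|X|^2$ moves the second and third quantities of \eqref{ineq:relativeenergy} to the LHS (keeping $\tfrac{1}{4\eps}|L-X|^2$ and discarding the remaining $-\tfrac{1}{4\eps}|L-X|^2\leq 0$); expanding the first quantity of \eqref{ineq:relativeenergy} via $\sum_{i<j}\chi_{\mathcal{T}_{i,j}}(u_\eps)\equiv 1$ then reduces the theorem to showing $\mathcal{R}\leq CE[u_\eps|\xi]$, where
\begin{align*}
\mathcal{R}&:=\sum_{i=1}^N\int\partial_t\xi_i\cdot\nabla(\psi_i\circ u_\eps)\dx+\tfrac{1}{2\eps}\int|X|^2\dx\\
&\quad+\tfrac{\eps}{2}\sumij\int|B\cdot\xi_{i,j}|^2|\xi_{i,j}|^2|\nabla u_\eps|^2\chi_{\mathcal{T}_{i,j}}(u_\eps)\dx-\sumij\int\h_\eps\cdot(B\cdot\xi_{i,j})\xi_{i,j}|\nabla u_\eps|\chi_{\mathcal{T}_{i,j}}(u_\eps)\dx.
\end{align*}

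To estimate $\mathcal R$ I would localize on each $\{u_\eps\in\mathcal{T}_{i,j}\}$ and apply Lemma~\ref{prop:sum}, rewriting the sums defining $\sum_\ell \xi_\ell\otimes\nabla(\psi_\ell\circ u_\eps)$, $\sum_\ell \partial_t\xi_\ell\otimes\nabla(\psi_\ell\circ u_\eps)$ and $X$ as a $\xi_{i,j}$-aligned principal block plus a secondary block involving $\partial_u\psi_0$ and $\xi_k$ for $k\notin\{i,j\}$. In the principal block, the transport relation \eqref{eq:timeevxiij} replaces $\partial_t\xi_{i,j}$ by $-(B\cdot\nabla)\xi_{i,j}-(\nabla B)^\mathsf{T}\xi_{i,j}$ up to $O(\dist(\cdot,I_{i,j}))$; integrating by parts in the $(B\cdot\nabla)$-term and using the orthogonality conditions \eqref{eq:normalB}--\eqref{eq:gradBmixed} (which force $\nabla B$ to be small in the $\xi_{i,j}\otimes\xi_{i,j}$- and mixed $\xi_{i,j}\otimes\xi_{i,j}^\perp$-directions to leading order), this principal block produces exactly the counter-term $\sumij\int\h_\eps\cdot(B\cdot\xi_{i,j})\xi_{i,j}|\nabla u_\eps|\chi_{\mathcal{T}_{i,j}}(u_\eps)\dx$ that cancels the last summand of $\mathcal R$. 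The principal part $\tfrac{1}{8\eps}|\nabla\cdot\xi_{i,j}|^2|\partial_u\psi_{i,j}(u_\eps)|^2\chi_{\mathcal{T}_{i,j}}(u_\eps)$ of $\tfrac{1}{2\eps}|X|^2$ and the remaining $\tfrac\eps2|B\cdot\xi_{i,j}|^2|\xi_{i,j}|^2|\nabla u_\eps|^2\chi_{\mathcal{T}_{i,j}}(u_\eps)$-quantity are then handled via the higher-order length identity \eqref{eq:lenghtxiij}, which turns $|\xi_{i,j}|^2=1+O(\dist^2(\cdot,I_{i,j}))$ into an $O(\dist^2)$-weighted multiple of $\tfrac\eps2|\nabla u_\eps|^2+\tfrac1\eps W(u_\eps)$, directly absorbed by \eqref{eq:coerdist}.

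All remaining contributions fall into three classes: (i) $O(\dist(\cdot,I_{i,j}))$-prefactors paired with $|\nabla(\psi_{i,j}\circ u_\eps)|\chi_{\mathcal{T}_{i,j}}(u_\eps)$ or with $(\tfrac\eps2|\nabla u_\eps|^2+\tfrac1\eps W(u_\eps))\chi_{\mathcal{T}_{i,j}}(u_\eps)$, handled by \eqref{eq:coerdistpsi}--\eqref{eq:coerdist} and Cauchy--Schwarz; (ii) tangential-gradient contributions $\eps|(\Id-\xi_{i,j}\otimes\xi_{i,j})\nabla u_\eps^\mathsf{T}|^2\chi_{\mathcal{T}_{i,j}}(u_\eps)$ arising when \eqref{eq:gradBmixed} is applied, absorbed by \eqref{eq:coertangentgradu}; (iii) contributions involving $\partial_u\psi_0(u_\eps)$, $\partial_u\psi_{i,j}\cdot\partial_u\psi_0$ or $|\nabla(\psi_0\circ u_\eps)|$ coming from the secondary block of Lemma~\ref{prop:sum} (further multiplied by the smallness $|\sqrt3\xi_{i,j}\cdot\xi_k|\leq c_\perp\dist(\cdot,I_{i,j})$ whenever a cross-product $\xi_{i,j}\cdot\xi_k$ appears), absorbed by Proposition~\ref{prop:coerpartialpsi}. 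The main obstacle I anticipate is the bookkeeping in the principal block: extracting the scalar--vector product $\h_\eps\cdot(B\cdot\xi_{i,j})\xi_{i,j}|\nabla u_\eps|$ from the a priori matrix-valued expression $\partial_t\xi_{i,j}\otimes\partial_u\psi_{i,j}(u_\eps):\nabla u_\eps$ relies on the equipartition deficit \eqref{eq:coeren} to convert $\partial_u\psi_{i,j}(u_\eps)\cdot\nabla u_\eps$ into $\xi_{i,j}|\nabla u_\eps|$ up to a term absorbed into $E[u_\eps|\xi]$, and it is here that the tangential bound \eqref{eq:coertangentgradu} together with the full strength of (A4) has to be used.
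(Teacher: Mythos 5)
Your overall route---differentiate along the flow, complete the square to surface the three LHS dissipation terms, reduce to $\mathcal R\leq CE[u_\eps|\xi]$, and then kill $\mathcal R$ using transport, integration by parts, and the coercivity inventory---is the route the paper takes (with the integration-by-parts bookkeeping packaged in Lemma~\ref{LemmaRelativeEnergyEvolution}). Your reduction to $\mathcal R$ and the preliminary algebra are sound.

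The gap is in the last step. You claim the quadratic quantities $\tfrac{1}{8\eps}|\nabla\cdot\xi_{i,j}|^2|\partial_u\psi_{i,j}(u_\eps)|^2\chi_{\mathcal T_{i,j}}(u_\eps)$ and $\tfrac\eps2|B\cdot\xi_{i,j}|^2|\xi_{i,j}|^2|\nabla u_\eps|^2\chi_{\mathcal T_{i,j}}(u_\eps)$ are handled by the length identity \eqref{eq:lenghtxiij} and $|\xi_{i,j}|^2=1+O(\dist^2)$, which supposedly turns them into $O(\dist^2)$-weighted quantities absorbable by \eqref{eq:coerdist}. This is not correct: $\nabla\cdot\xi_{i,j}$ and $B\cdot\xi_{i,j}$ are generically $O(1)$ near and on $I_{i,j}$ (on the interface they equal minus the mean curvature and the normal velocity, respectively), so these two quantities are of full order $O(\tfrac1\eps W)$ and $O(\eps|\nabla u_\eps|^2)$ with no $\dist^2$ prefactor, and $|\xi_{i,j}|^2=1+O(\dist^2)$ controls only the $(1-|\xi_{i,j}|^2)$-deficit, while \eqref{eq:lenghtxiij} concerns the time evolution of $|\xi_{i,j}|^2$, not its spatial size. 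What produces the needed smallness is the mean-curvature/velocity cancellation $(B\cdot\xi_{i,j})\xi_{i,j}+(\nabla\cdot\xi_{i,j})\xi_{i,j}=O(\dist(\cdot,I_{i,j}))$ from \eqref{eq:normalB}, which you misattribute (your description of \eqref{eq:normalB} as controlling $\nabla B$ in the $\xi_{i,j}\otimes\xi_{i,j}$-direction actually belongs to \eqref{eq:gradBnormal}). To exploit it you must first complete the square: the integration by parts of $(\nabla\cdot\xi_i)B\cdot\nabla(\psi_i\circ u_\eps)$ also produces the cross term $\sumij\int\tfrac12(\nabla\cdot\xi_{i,j})(B\cdot\xi_{i,j})(\xi_{i,j}\cdot\nabla(\psi_{i,j}\circ u_\eps))\chi_{\mathcal T_{i,j}}(u_\eps)\dx$, and after adding zeros as in Step~3 of the paper's proof the dominant piece of the resulting square is $\tfrac32\int|(B\cdot\xi_{i,j})\xi_{i,j}+(\nabla\cdot\xi_{i,j})\xi_{i,j}|^2\eps|\nabla u_\eps|^2\chi_{\mathcal T_{i,j}}(u_\eps)\dx$, plus equipartition and orientation deficits controlled by \eqref{eq:coersurfmeas} and \eqref{eq:coernormalu}. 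Likewise, the term $\int\h_\eps\cdot B|\nabla u_\eps|\dx$ produced by this integration by parts only has its $\xi_{i,j}\otimes\xi_{i,j}$-projection cancel the last summand of $\mathcal R$; the tangential remainder $\sumij\int(\Id-\xi_{i,j}\otimes\xi_{i,j}):\h_\eps\otimes B|\nabla u_\eps|\chi_{\mathcal T_{i,j}}(u_\eps)\dx$ must be paired with the tangential projection of the cross term and absorbed via \eqref{eq:coertangentgradu} into the $\bar\delta$-weighted $\tfrac{1}{4\eps}|L-X|^2$ slack. Without the completion of squares and the separate normal/tangential split of the $\h_\eps$-term, the estimate $\mathcal R\leq CE[u_\eps|\xi]$ does not close.
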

Building on the previous estimate and the coercivity properties of the relative entropy, we will show the following error estimate at the level of the indicator functions.
\begin{proposition}
\label{PropBulkConvergence}
Let the assumptions of Theorem~\ref{TheoremRelativeEnergyInequality} be in place. In addition, let $\vartheta_i$ be a family of evolving distance weights as defined in Definition~\ref{DefinitionGradFlowCalibration}. We then have for all $i\in \{1,\ldots,N\}$
\begin{align*} 
&\sup_{t \in [0,T]} \int_{\R^d} |\psi_i(u_\vareps ) - \bar{\chi}_i|  \min\{\dist(\cdot, \partial \supp \bar \chi_i(\cdot,t) ),1\} \dx
\\&
\leq C(d,T, (\bar{\chi}(t))_{t \in [0,T]}) E[u_\eps|\xi](0)
\\&~~~~
+ C(d,T, (\bar{\chi}(t))_{t \in [0,T]}) \int_{\R^d} |\psi_i(u_\vareps(\cdot,0)) - \bar{\chi}_i(\cdot,0)|  \min\{\dist(\cdot, \partial \supp \bar \chi_i(\cdot,0) ),1\} \dx .
\end{align*}
\end{proposition}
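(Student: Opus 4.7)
The plan is to test the deviation $\psi_i(u_\varepsilon)-\bar\chi_i$ against the signed weight $\vartheta_i$ and derive a Gronwall-type estimate coupled to the relative energy inequality. I would first introduce
\[
F_i(t) := \int_{\R^d}\big(\psi_i(u_\varepsilon(\cdot,t)) - \bar\chi_i(\cdot,t)\big)\vartheta_i(\cdot,t)\dx.
\]
By the sign conventions~\eqref{DistanceWeightConditions} and $0 \leq \psi_i \leq 1$, the integrand is pointwise nonnegative; thus $F_i(t) = \int |\psi_i(u_\varepsilon) - \bar\chi_i||\vartheta_i|\dx$ already controls the left-hand side of the proposition (up to the equivalence $|\vartheta_i|\sim \min\{\dist(\cdot,\partial\supp\bar\chi_i),1\}$), and it suffices to prove $F_i(t) \leq C F_i(0) + C E[u_\varepsilon|\xi](0)$.

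Second, I would differentiate $F_i$ in time. The calibration property guarantees that the strong solution transports along $B$, so $\partial_t \bar\chi_i + B\cdot \nabla \bar\chi_i = 0$ distributionally; combined with $\vartheta_i|_{\partial\supp\bar\chi_i} = 0$, the contribution of $\partial_t \bar\chi_i$ vanishes. Using also $|\partial_t \vartheta_i + B\cdot \nabla \vartheta_i|\leq C|\vartheta_i|$ from~\eqref{eq:timeevtheta} and integrating by parts (once more invoking $\vartheta_i|_{\partial\supp\bar\chi_i}=0$ to kill the boundary term of $\nabla \bar\chi_i$), the time derivative reorganizes into the transport-compatible identity
\[
\frac{dF_i}{dt} = \int_{\R^d} \vartheta_i\, \partial_u\psi_i(u_\varepsilon)\cdot \big(\partial_t u_\varepsilon + B\cdot \nabla u_\varepsilon\big)\dx + O(F_i).
\]
Inserting the Allen-Cahn equation \eqref{eq:AC} and decomposing $\varepsilon\Delta u_\varepsilon - \frac{1}{\varepsilon}\partial_u W(u_\varepsilon) = \mathcal{R}_\varepsilon - \sum_k (\nabla\cdot \xi_k)\partial_u\psi_k(u_\varepsilon)$, where $\mathcal{R}_\varepsilon$ is exactly the quantity whose weighted $L^2$-norm is controlled by the dissipation in Theorem~\ref{TheoremRelativeEnergyInequality}, produces three pieces: a $\mathcal{R}_\varepsilon$-piece, a ``divergence-of-$\xi$'' piece, and a transport piece $\int \vartheta_i B\cdot \nabla\psi_i(u_\varepsilon)\dx$.

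Finally, I would bound each piece. For the $\mathcal{R}_\varepsilon$-piece, weighted Young's inequality gives $\tfrac{1}{8\varepsilon}\int |\mathcal{R}_\varepsilon|^2 \dx$ (half of the dissipation) plus $C\int \vartheta_i^2 |\partial_u\psi_i(u_\varepsilon)|^2/\varepsilon \dx$, the latter being bounded by $CE[u_\varepsilon|\xi]$ via $|\partial_u \psi_i|^2 \leq CW$ on $\mathcal{T}_{i,j}$ from (A4) and the coercivity~\eqref{eq:coerdist}. For the remaining two pieces, the decompositions $\partial_u\psi_i = -\tfrac12(\partial_u\psi_{i,j}+\partial_u\psi_0)$ and $\nabla\psi_i(u_\varepsilon) = -\tfrac12(\nabla\psi_{i,j}(u_\varepsilon)+\nabla \psi_0(u_\varepsilon))$ on $\mathcal{T}_{i,j}$ (consequences of $\psi_i+\psi_j+\psi_0=1$ and $\psi_{i,j}=\psi_j-\psi_i$) isolate a ``triple-junction'' contribution involving $\psi_0$ and an ``$I_{i,j}$-interface'' contribution involving $\psi_{i,j}$. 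The former is bounded by $CE[u_\varepsilon|\xi]$ via Proposition~\ref{prop:coerpartialpsi} and $|\vartheta_i|\leq C$, whereas for the latter I would use a Young-type splitting coupled to the normal-direction dissipation $\int \frac{1}{2\varepsilon}|\h_\varepsilon - \varepsilon(B\cdot \xi_{i,j})\xi_{i,j}|\nabla u_\varepsilon||^2 \chi_{\mathcal{T}_{i,j}}\dx$ from Theorem~\ref{TheoremRelativeEnergyInequality}, which absorbs the bad contributions into the relative energy's dissipation. Collecting these estimates yields a differential inequality of the form $\tfrac{d}{dt}F_i \leq C F_i + C E[u_\varepsilon|\xi] + \lambda\,\mathcal{D}(t)$, with $\lambda>0$ a small fixed constant and $\mathcal{D}(t)$ the dissipation of Theorem~\ref{TheoremRelativeEnergyInequality}. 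Forming the combined quantity $G_i(t) := F_i(t) + \mu E[u_\varepsilon|\xi](t)$ with $\mu$ large enough to absorb $\lambda \mathcal{D}$ against $\mu$ times the relative energy's dissipation, one obtains $\tfrac{d}{dt}G_i \leq C G_i$, and a standard Gronwall argument delivers $F_i(t) \leq G_i(t) \leq e^{CT}G_i(0) \leq C(F_i(0)+E[u_\varepsilon|\xi](0))$, which is the claim. The main obstacle I anticipate is the careful handling of the $\psi_{i,j}$-piece: a brute-force Cauchy--Schwarz against $\vartheta_i^2$ only provides a $CE[u_\varepsilon|\xi]^{1/2}$ bound and would degrade the convergence rate; the delicate point is to exploit the precise structure of the Allen-Cahn dissipation involving $(B\cdot \xi_{i,j})\xi_{i,j}$ in Theorem~\ref{TheoremRelativeEnergyInequality} to recover the correct linear dependence on $E[u_\varepsilon|\xi]$.
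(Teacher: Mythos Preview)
Your overall strategy---defining $F_i(t)=\int(\psi_i(u_\varepsilon)-\bar\chi_i)\vartheta_i\dx$, differentiating, isolating a transport-compatible form via the advection property \eqref{eq:timeevtheta}, and then closing a Gronwall inequality by coupling to the dissipation of Theorem~\ref{TheoremRelativeEnergyInequality}---is correct and matches the paper. The handling of the $\psi_0$-contributions and of the quadratic weight $\tfrac{1}{\varepsilon}|\vartheta_i|^2|\partial_u\psi_i|^2$ via (A4) and \eqref{eq:coerdist} is also exactly as in the paper.

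Where your route diverges from the paper is in the treatment of the interface ($\psi_{i,j}$-)piece. You add and subtract $\sum_k(\nabla\cdot\xi_k)\partial_u\psi_k$ to produce the $\mathcal{R}_\varepsilon$-quantity and thus land on the \emph{fourth} dissipation term of \eqref{ineq:relativeenergy}; afterwards you are left with
\[
-\tfrac{1}{4\varepsilon}\!\int\!\vartheta_i(\nabla\!\cdot\!\xi_{i,j})|\partial_u\psi_{i,j}|^2\chit
\;-\;\tfrac12\!\int\!\vartheta_i\,B\!\cdot\!\nabla(\psi_{i,j}\!\circ\! u_\varepsilon)\chit,
\]
which does \emph{not} couple naturally to the $\h_\varepsilon$-dissipation as you suggest. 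It can still be closed, but via the calibration relation \eqref{eq:normalB} (to replace $\nabla\cdot\xi_{i,j}$ by $-B\cdot\xi_{i,j}$ up to $O(\dist)$) together with the coercivity-type estimate $\int|\sqrt{\varepsilon}(\xi_{i,j}\cdot\nabla)u_\varepsilon-\tfrac{1}{2\sqrt{\varepsilon}}\partial_u\psi_{i,j}|^2\chit\leq CE[u_\varepsilon|\xi]$ that is implicit in the proof of \eqref{eq:coertangentgradu}. By contrast, the paper takes a more direct route: it does \emph{not} introduce $\mathcal{R}_\varepsilon$ but instead decomposes the transport term via the projection $\xi_{i,j}\otimes\xi_{i,j}$, then exploits the algebraic identity
\[
\tfrac{1}{2\varepsilon}\Big|\big(\varepsilon\Delta u_\varepsilon-\tfrac{1}{\varepsilon}\partial_uW\big)+\h_\varepsilon\cdot\tfrac{\nabla u_\varepsilon^\mathsf{T}}{|\nabla u_\varepsilon|}\Big|^2
\;\leq\;\tfrac{1}{2\varepsilon}\Big(|\varepsilon\Delta u_\varepsilon-\tfrac{1}{\varepsilon}\partial_uW|^2-|\h_\varepsilon|^2\Big),
\]
thereby consuming the \emph{third} dissipation term of \eqref{ineq:relativeenergy} for the AC-piece, while the first dissipation term absorbs the remainder $\varepsilon(B\cdot\xi_{i,j})\xi_{i,j}|\nabla u_\varepsilon|-\h_\varepsilon$ from the normal transport. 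Both approaches work; the paper's identity avoids the extra appeal to \eqref{eq:normalB} and to an unstated coercivity bound.
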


The proof of Theorem~\ref{TheoremRelativeEnergyInequality} crucially relies on the coercivity properties of Proposition~\ref{prop:coerpartialpsi} and \ref{prop:maincoer} and the following simplification of the evolution equation for the relative entropy.
\begin{lemma}
\label{LemmaRelativeEnergyEvolution}
Let $W$ be a potential of class $C^{1,1}_{loc}(\mathbb{R}^{N-1})$ subject to assumptions (A1)--(A4).
Let $u_\varepsilon$ be a solution to the vector-valued Allen-Cahn equation \eqref{eq:AC} with initial data $u_\varepsilon (\cdot,0) \in \dot H^1(\Rd;\overline{U})$ with finite energy $E[u_\eps(\cdot,0)]<\infty$. 
Let $(\xi_i,B)$ be a gradient flow calibration in the sense of Definition~\ref{DefinitionGradFlowCalibration}.
The time evolution of the relative energy \eqref{eq:relativeenergy} is then given by
\begin{align}
\label{eq:timeevolutionrelativeenergy}
&\ddt E[u_\varepsilon|\xi]
\\
\nonumber
&= - \sumij  \int_{\R^d} \frac{1}{2 \vareps}\big|\h_\vareps - \vareps  (B \cdot {\xi}_{i,j}){\xi}_{i,j} |\nabla u_\vareps| \big|^2  \chit (u_\vareps)\dx
\\&~~~
\nonumber
- \int_{\R^d} \frac{1}{2 \vareps}  \left| \Big(\vareps \Delta u_\vareps - \frac{1}{\vareps} \partial_u W(u_\vareps)\Big) + \sum^N_{i=1} (\nabla \cdot \xi_i) \partial_u \psi_i (u_\vareps) \right|^2 \dx
\\&~~~
\nonumber
-\int_{\R^d} \frac{1}{2 \vareps} \left( \Big|\vareps \Delta u_\vareps - \frac{1}{\vareps} \partial_u W(u_\vareps)\Big|^2 - |\h_\vareps|^2\right) \dx
\\&~~~
\nonumber
+\operatorname{Err}_{AllenCahn}
+\operatorname{Err}_{instab}+\operatorname{Err}_{dt\xi}+\operatorname{Err}_{MC\xi}+\operatorname{Err}_{OtherPhases}
\end{align}
where we have abbreviated
\begin{subequations}
\begin{align}
&\operatorname{Err}_{instab}
:=\notag
\\&~~~
\int_{\R^d} (\nabla \cdot B) \Big( \frac{\vareps}{2} |\nabla u_\vareps|^2 + \frac1\vareps W(u_\vareps) + \sum^N_{i=1} \xi_i \cdot \nabla (\psi_i \circ u_\vareps)\Big) \dx
\label{Errinstab}
\\
&~~~
- \sumij \int_{\R^d}\frac12 \nabla B : \left( \xi_{i,j} - \frac{\nabla (\psi_{i,j}  \circ u_\vareps)}{|\nabla (\psi_{i,j}  \circ u_\vareps)|}  \right)
\otimes  \left( \xi_{i,j} - \frac{\nabla (\psi_{i,j}  \circ u_\vareps)}{|\nabla (\psi_{i,j}  \circ u_\vareps)|}  \right)
\notag
\\
&~~~~~~ \hspace{6.7cm}
\times 
|\nabla (\psi_{i,j}  \circ u_\vareps)| \chit(u_\vareps)  \dx
\notag
\end{align}
and
\begin{align}
&\operatorname{Err}_{AllenCahn}
:=
\notag
\\&~~~~~~~~~~~~
\label{ErrAllenCahn}
\sumij  \int_{\R^d}\nabla B : \bigg( \frac{\nabla (\psi_{i,j}  \circ u_\vareps)}{|\nabla (\psi_{i,j}  \circ u_\vareps)|}\otimes \frac{\nabla (\psi_{i,j}  \circ u_\vareps)}{|\nabla (\psi_{i,j}  \circ u_\vareps)|} \frac{1}{2} |\nabla (\psi_{i,j}  \circ u_\vareps)|
\\&~~~~~~~~~~~~~~~~~~~~~~~~~~~~~~~~~~~~~~~~~~~~~~~~~~~~~~~~~~~~~~~~
- \vareps \nabla u_\vareps^\mathsf{T} \nabla u_\vareps \bigg) \chit (u_\vareps) \dx
\notag
\end{align}
and
\begin{align}
\operatorname{Err}_{dt\xi}
:=
\label{Errdtxi}
&\sumij  \int_{\R^d} \frac12 (\partial_t \xi_{i,j} + (B\cdot \nabla ) \xi_{i,j}  + (\nabla B)^\mathsf{T}\xi_{i,j} )
\\
& \hspace{2cm}\cdot \left( \xi_{i,j} - \frac{\nabla (\psi_{i,j}  \circ u_\vareps)}{|\nabla (\psi_{i,j}  \circ u_\vareps)|}  \right)   |\nabla (\psi_{i,j}  \circ u_\vareps)| \chit(u_\vareps) \dx \notag
\\&
- \sumij  \int_{\R^d} \frac12  \xi_{i,j} \cdot ( \partial_t \xi_{i,j} + (B\cdot \nabla ) \xi_{i,j}   ) |\nabla (\psi_{i,j}  \circ u_\vareps)| \chit (u_\vareps) \dx \notag
\end{align}
as well as
\begin{align}
\nonumber
\operatorname{Err}_{MC\xi}
:=&
\int_{\R^d} \frac{1}{2 \vareps}  \left| \sum^N_{i=1} (\nabla \cdot \xi_i) \partial_u \psi_i (u_\vareps) \right|^2 \dx
\\&~~~
\label{Errmcxi}
- \sum^N_{i=1} \int_{\R^d}  (\nabla \cdot \xi_i ) B  \cdot  \nabla(\psi_i \circ u_\vareps) \dx
\\&~~~
\nonumber
+ \sumij  \int_{\R^d} \frac{\vareps}{2} |B \cdot {\xi}_{i,j}|^2 |\xi_{i,j}|^2 |\nabla u_\vareps|^2 \chit (u_\vareps)\dx 
\\&~~~
\nonumber
+ \sumij  \int_{\R^d} (\Id - \xi_{i,j} \otimes \xi_{i,j}):\h_\vareps \otimes B    |\nabla u_\vareps | \chit(u_\vareps)\dx
\end{align}
and
\begin{align}
\nonumber
&\operatorname{Err}_{OtherPhases}
:=
\\ \notag
&~
\sumkij \int_{\R^d} \frac12  (\partial_t \xi_{k} + (B\cdot \nabla ) \xi_{k}  + (\nabla B)^\mathsf{T}\xi_{k} )\cdot  \nabla (\psi_{0} \circ u_\vareps ) \chit (u_\vareps) \dx 
\\&
\label{ErrOtherPhases}
~-\sumkij  \int_{\R^d}\frac12 \nabla B :  \nabla (\psi_0 \circ u_\vareps ) \otimes \xi_{k} \chit(u_\vareps)  \dx
\\&
\nonumber
~ -\sumkij  \int_{\R^d} \frac12\nabla B : \xi_{k}\otimes  \nabla (\psi_{0} \circ u_\vareps )   \chit(u_\vareps)\dx. \notag
\end{align}
\end{subequations}
\end{lemma}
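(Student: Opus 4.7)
My plan is to differentiate $E[u_\varepsilon|\xi]$ under the integral sign, substitute the Allen-Cahn equation \eqref{eq:AC} to eliminate $\partial_t u_\varepsilon$, integrate by parts to move spatial derivatives between $\xi_i$ and $\psi_i\circ u_\varepsilon$, and then reorganize the resulting bulk integrals so that the three coercive square terms on the right-hand side of \eqref{eq:timeevolutionrelativeenergy} are produced, with the leftover pieces matched against the five listed error integrals \eqref{Errinstab}--\eqref{ErrOtherPhases}. The two structural tools are the partition-of-unity identities of Lemma~\ref{prop:sum}, which reduce the multiphase calibration sum to a dominant pairwise $(i,j)$-contribution plus $\psi_0$-residues, and a Modica-Mortola-type completion of the square as in \eqref{eq:startcoer0}. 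As a first step, setting $f := \varepsilon\Delta u_\varepsilon - \varepsilon^{-1}\partial_u W(u_\varepsilon)$, so that $\varepsilon\partial_t u_\varepsilon = f$ by \eqref{eq:AC}, integration by parts in space immediately yields $\ddt E[u_\varepsilon] = -\varepsilon^{-1}\int_{\Rd}|f|^2\dx$. Differentiating $\sum_i\int\xi_i\cdot\nabla(\psi_i\circ u_\varepsilon)\dx$ in time and integrating by parts once in space produces a $\partial_t\xi_i$-term tested against $\nabla(\psi_i\circ u_\varepsilon)$ together with the coupling $-\varepsilon^{-1}\sum_i\int(\nabla\cdot\xi_i)\partial_u\psi_i(u_\varepsilon)\cdot f\dx$.

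\textbf{Transport by $B$ and pairwise decomposition.} To activate the calibration conditions from Definition~\ref{DefinitionGradFlowCalibration}, I add and subtract $(B\cdot\nabla)\xi_i\cdot\nabla(\psi_i\circ u_\varepsilon)$ and integrate by parts the subtracted piece; this produces the transported combination $\partial_t\xi_i + (B\cdot\nabla)\xi_i + (\nabla B)^T\xi_i$ governed by \eqref{eq:timeevxiij}, a divergence term $-(\nabla\cdot B)\xi_i\cdot\nabla(\psi_i\circ u_\varepsilon)$, and a $\nabla B:\xi_i\otimes\nabla(\psi_i\circ u_\varepsilon)$ term. Applying Lemma~\ref{prop:sum} on each cell $\{u_\varepsilon\in\mathcal{T}_{i,j}\}$ then rewrites each multiphase sum as a dominant pairwise $(i,j)$-piece in $\xi_{i,j}$ and $\nabla(\psi_{i,j}\circ u_\varepsilon)$ plus residues involving $\psi_0$ and the third-phase fields $\xi_k$; these residues collect precisely into $\operatorname{Err}_{\operatorname{OtherPhases}}$.

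\textbf{Completing squares and identifying error terms.} Writing $g := \sum_i(\nabla\cdot\xi_i)\partial_u\psi_i(u_\varepsilon)$, I split $-\varepsilon^{-1}\int|f|^2$ into two equal halves. The first half combines with $-\varepsilon^{-1}\int f\cdot g$ from the previous step and a compensating $\frac{1}{2\varepsilon}\int|g|^2$ (the first summand of $\operatorname{Err}_{MC\xi}$, added and subtracted) into the second coercive square $-\frac{1}{2\varepsilon}\int|f+g|^2$. The other half is rewritten as $-\frac{1}{2\varepsilon}\int(|f|^2-|h_\varepsilon|^2) - \frac{1}{2\varepsilon}\int|h_\varepsilon|^2$; localizing the latter via $\sum_{i<j}\chi_{\mathcal{T}_{i,j}}(u_\varepsilon)\equiv 1$ and completing the square against the tangential-$B$ terms, whose balancing quadratic and mixed counterparts are the remaining summands of $\operatorname{Err}_{MC\xi}$, yields the first coercive square $-\sum_{i<j}\int\frac{1}{2\varepsilon}|h_\varepsilon - \varepsilon(B\cdot\xi_{i,j})\xi_{i,j}|\nabla u_\varepsilon||^2\chi_{\mathcal{T}_{i,j}}(u_\varepsilon)$. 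The still-remaining terms are matched to \eqref{Errinstab}, \eqref{ErrAllenCahn}, and \eqref{Errdtxi} as follows: with $\nu := \nabla(\psi_{i,j}\circ u_\varepsilon)/|\nabla(\psi_{i,j}\circ u_\varepsilon)|$, the transport derivative contracted against $\nabla(\psi_{i,j}\circ u_\varepsilon)$ is split via the projection $\operatorname{Id} = (\xi_{i,j}-\nu)\otimes(\cdot) + \nu\otimes(\cdot)$ into $\operatorname{Err}_{dt\xi}$; the divergence-of-$B$ piece is rewritten as $\frac12\nabla B : (\xi_{i,j}-\nu)\otimes(\xi_{i,j}-\nu)|\nabla(\psi_{i,j}\circ u_\varepsilon)|$ plus a divergence term acting on the full energy density to give $\operatorname{Err}_{\operatorname{instab}}$; and the residual $\nabla B$-contribution, when combined with $-\varepsilon\nabla u_\varepsilon^T\nabla u_\varepsilon$ via $\nabla(\psi_{i,j}\circ u_\varepsilon) = (\partial_u\psi_{i,j}\circ u_\varepsilon)\nabla u_\varepsilon$ and the vanishing of $\psi_k$ for $k\notin\{i,j\}$ on $\mathcal{T}_{i,j}$, produces $\operatorname{Err}_{\operatorname{AllenCahn}}$.

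\textbf{Main obstacle.} The chief difficulty is the bookkeeping: the pairwise decomposition from Lemma~\ref{prop:sum}, the $\chi_{\mathcal{T}_{i,j}}(u_\varepsilon)$-localization, and the tangential/normal decomposition used in completing the squares must all be performed in parallel so that the exact combinations defining \eqref{Errinstab}--\eqref{ErrOtherPhases} emerge without mismatch. An additional subtlety is that $\nu$ is only defined where $\nabla(\psi_{i,j}\circ u_\varepsilon) \neq 0$, but since every expression in which $\nu$ appears carries the weight $|\nabla(\psi_{i,j}\circ u_\varepsilon)|$, the manipulations are unambiguous under the convention that the integrand vanishes on $\{\nabla(\psi_{i,j}\circ u_\varepsilon) = 0\}$. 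The remaining manipulations are routine chain-rule and integration-by-parts computations, justified by the assumed $C^{1,1}_{\operatorname{loc}}$ regularity of $W$ and $\psi_i$, the boundedness of $u_\varepsilon \in L^\infty([0,T];\dot H^1(\Rd;\overline{U}))$, and the smoothness of the calibration $(\xi_i,B)$.
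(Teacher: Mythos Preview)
Your proposal is correct and follows essentially the same route as the paper: direct differentiation with substitution of \eqref{eq:AC}, integration by parts to produce the transported combination $\partial_t\xi_i+(B\cdot\nabla)\xi_i+(\nabla B)^{\mathsf T}\xi_i$, application of Lemma~\ref{prop:sum} to decompose into the dominant $(i,j)$-pair plus $\psi_0$-residues, and completion of squares to extract the three coercive integrals. The one algebraic identity the paper makes explicit and you leave implicit is $\int_{\Rd}\h_\vareps\cdot B\,|\nabla u_\vareps|\dx=\int_{\Rd}\vareps\,\nabla B:\nabla u_\vareps^{\mathsf T}\nabla u_\vareps\dx-\int_{\Rd}(\nabla\cdot B)\bigl(\tfrac{\vareps}{2}|\nabla u_\vareps|^2+\tfrac{1}{\vareps}W(u_\vareps)\bigr)\dx$, which is precisely what connects the $\h_\vareps$-square to $\operatorname{Err}_{AllenCahn}$ and the first summand of $\operatorname{Err}_{instab}$.
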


\section{The relative energy argument}

\subsection{Derivation of the Gronwall inequality for the relative entropy}

We first show how the evolution estimate for the relative entropy from Lemma~\ref{LemmaRelativeEnergyEvolution} and the coercivity properties of our relative entropy together imply a Gronwall-type estimate for the evolution of the relative entropy.

\begin{proof}[Proof of Theorem~\ref{TheoremRelativeEnergyInequality}]
We proceed by estimating the terms on the right-hand side of the equation \eqref{eq:timeevolutionrelativeenergy} for the time evolution of the relative energy. Note that it will be sufficient to prove
\begin{align*}
&\operatorname{Err}_{AllenCahn} + 
\operatorname{Err}_{instab}+\operatorname{Err}_{dt\xi}+
\operatorname{Err}_{MC\xi}+\operatorname{Err}_{OtherPhases}
\\&
\leq C(\xi(t),B(t),\bar\delta ) E[u_\vareps|\xi]
\\&~~~
+\bar\delta  \sumij \int_{\R^d} \frac{1}{2 \vareps}\big|\h_\vareps - \vareps  (B \cdot {\xi}_{i,j}){\xi}_{i,j} |\nabla u_\vareps| \big|^2  \chit (u_\vareps)\dx
\\&~~~
+ \bar\delta \int_{\R^d} \frac{1}{2 \vareps}  \left| \Big(\vareps \Delta u_\vareps - \frac{1}{\vareps} \partial_u W(u_\vareps)\Big) + \sum^N_{i=1} (\nabla \cdot \xi_i) \partial_u \psi_i (u_\vareps) \right|^2 \dx
\end{align*}
for any $\bar\delta >0$, as then an absorption argument applied to \eqref{eq:timeevolutionrelativeenergy} (for $\bar\delta<\tfrac{1}{4}$) yields
\begin{align*}
\ddt E[u_\varepsilon| \xi] \leq C(\xi(t),B(t),\bar\delta) E[u_\vareps| \xi].
\end{align*}
The Gronwall inequality then implies our conclusion.

\emph{Step 1: Estimates for $\operatorname{Err}_{OtherPhases}$, $\operatorname{Err}_{dt\xi}$, and $\operatorname{Err}_{instab}$.} We first show that
\begin{align*}
\operatorname{Err}_{instab}+\operatorname{Err}_{dt\xi}+\operatorname{Err}_{OtherPhases}
\leq C(\xi(t),B(t)) E[u_\vareps| \xi].
\end{align*}
Indeed, it is immediate by the definition \eqref{ErrOtherPhases} and the coercivity property \eqref{eq:coergradpsik} of our relative energy that the inequality
\begin{align*}
\operatorname{Err}_{OtherPhases}
&\leq \sumij C(\xi(t),B(t))  \int_{\R^d} |\nabla \psi_0| \chit (u_\vareps) \dx
\\&
\leq C(\xi(t),B(t)) E[u_\vareps|\xi]
\end{align*}
holds.
Using the defining properties \eqref{eq:timeevxiij} and \eqref{eq:lenghtxiij} of the calibration $\xi$ and the coercivity properties \eqref{eq:coernormalpsi} and \eqref{eq:coerdistpsi} of our relative energy, we likewise deduce from the definition \eqref{Errdtxi} that $\operatorname{Err}_{dt\xi}\leq C(\xi(t),B(t)) E[u_\vareps|\xi]$, using for instance the estimate
\begin{align*}
&\sumij  \int_{\R^d} \frac12 (\partial_t \xi_{i,j} + (B\cdot \nabla ) \xi_{i,j}  + (\nabla B)^\mathsf{T}\xi_{i,j} )
\\
& \hspace{2cm}\cdot \left( \xi_{i,j} - \frac{\nabla (\psi_{i,j}  \circ u_\vareps)}{|\nabla (\psi_{i,j}  \circ u_\vareps)|}  \right)   |\nabla (\psi_{i,j}  \circ u_\vareps)| \chit (u_\vareps) \dx
\\&
\leq
\sumij \bigg(\int_{\R^d} \big|\partial_t \xi_{i,j} + (B\cdot \nabla ) \xi_{i,j}  + (\nabla B)^\mathsf{T}\xi_{i,j}\big|^2 |\nabla (\psi_{i,j}  \circ u_\vareps)| \chit (u_\vareps) \dx \bigg)^{1/2}
\\
&~~~~~~~~~~~~~~~~
\times
\bigg(\int_{\R^d} \bigg|\xi_{i,j} - \frac{\nabla (\psi_{i,j}  \circ u_\vareps)}{|\nabla (\psi_{i,j}  \circ u_\vareps)|}\bigg|^2 |\nabla (\psi_{i,j}  \circ u_\vareps)| \chit (u_\vareps) \dx \bigg)^{1/2}
\\&
\stackrel{\eqref{eq:timeevxiij},\eqref{eq:coernormalpsi},\eqref{eq:coerdistpsi}}{\leq} C(\xi(t),B(t)) E[u_\vareps|\xi].
\end{align*}
Similarly, recalling the definition \eqref{Errinstab} and 
\eqref{eq:coernormalpsi}
as well as \eqref{eq:relativeenergyfirst}, we immediately get $\operatorname{Err}_{instab} \leq C(\xi(t),B(t)) E[u_\vareps|\xi]$. It therefore only remains to estimate $\operatorname{Err}_{AllenCahn}$ and $\operatorname{Err}_{MC\xi}$.

\emph{Step 2: Estimate for $\operatorname{Err}_{AllenCahn}$.} By adding zeroes, we may rewrite
\begin{align*}
&\operatorname{Err}_{AllenCahn}
\\&
= \sumij \int_{\R^d}\nabla B : \left( \frac{\nabla (\psi_{i,j}  \circ u_\vareps)}{|\nabla (\psi_{i,j}  \circ u_\vareps)|}\otimes \frac{\nabla (\psi_{i,j}  \circ u_\vareps)}{|\nabla (\psi_{i,j}  \circ u_\vareps)|}    - \xi_{i,j} \otimes\xi_{i,j}  \right)
\\&~~~~~~~~~~~~~~~~~~~~~
\hspace{1.4cm} \times \left(\frac12|\nabla (\psi_{i,j}  \circ u_\vareps) | - \vareps |\nabla u_\vareps|^2 \right) \chit (u_\vareps)\dx 
\\&~~~
+ \sumij \int_{\R^d}\nabla B : \xi_{i,j} \otimes\xi_{i,j} \left(\frac12|\nabla (\psi_{i,j}  \circ u_\vareps) | - \vareps |\nabla u_\vareps|^2 \right) \chit (u_\vareps)\dx
\\&~~~
+ \sumij \int_{\R^d}\nabla B : \left( \frac{\nabla (\psi_{i,j}  \circ u_\vareps)}{|\nabla (\psi_{i,j}  \circ u_\vareps)|}\otimes \frac{\nabla (\psi_{i,j}  \circ u_\vareps)}{|\nabla (\psi_{i,j}  \circ u_\vareps)|}   - \frac{ \nabla u_\vareps^\mathsf{T} \nabla u_\vareps }{|\nabla u_\vareps|^2} \right) \vareps |\nabla u_\vareps|^2 \chit (u_\vareps)\dx.
\end{align*}
The first term on the right-hand side can be bounded by $C(\xi(t),B(t))E[u_\vareps|\xi]$ by writing
\begin{align*}
	 &\frac{\nabla (\psi_{i,j}  \circ u_\vareps)}{|\nabla (\psi_{i,j}  \circ u_\vareps)|}\otimes \frac{\nabla (\psi_{i,j}  \circ u_\vareps)}{|\nabla (\psi_{i,j}  \circ u_\vareps)|}    - \xi_{i,j} \otimes\xi_{i,j}  \\&= \left( \frac{\nabla (\psi_{i,j}  \circ u_\vareps)}{|\nabla (\psi_{i,j}  \circ u_\vareps)|} - \xi_{i,j} \right)\otimes \frac{\nabla (\psi_{i,j}  \circ u_\vareps)}{|\nabla (\psi_{i,j}  \circ u_\vareps)|}    + \xi_{i,j} \otimes \left( \frac{\nabla (\psi_{i,j}  \circ u_\vareps)}{|\nabla (\psi_{i,j}  \circ u_\vareps)|}- \xi_{i,j} \right)
\end{align*} 
and using Young's inequality together with the coercivity estimates \eqref{eq:coernormalu} and \eqref{eq:coersurfmeaslenght} for our relative energy.
The second term on the right-hand side in the above formula can be estimated similarly by exploiting Young's inequality as well as the gradient flow calibration property \eqref{eq:gradBnormal} and the coercivity estimates \eqref{eq:coerdist} and \eqref{eq:coersurfmeaslenght}.

It remains to bound the third term on the right-hand side.
To this aim, we note that for any symmetric matrix $A$ we have
\begin{align*}
\nabla B : A = & \;\, \nabla B : (\Id-\xi_{i,j} \otimes \xi_{i,j})A(\Id-\xi_{i,j}  \otimes \xi_{i,j})
\\&
+ (\Id-\xi_{i,j} \otimes \xi_{i,j}) \big( \nabla B + (\nabla B)^T \big) \xi_{i,j} \cdot (\xi_{i,j}\cdot A (\Id-\xi_{i,j}  \otimes \xi_{i,j}) )
\\&
+ \xi_{i,j}\cdot (\Id-\xi_{i,j} \otimes \xi_{i,j}) \big( \nabla B + (\nabla B)^T \big)\xi_{i,j} (\xi_{i,j}\cdot A \xi_{i,j})
\\&
+ \xi_{i,j} \cdot \nabla B \,\xi_{i,j} (\xi_{i,j}\cdot A \xi_{i,j}).
\end{align*}
This entails by \eqref{eq:gradBmixed} and $|\xi_{i,j}(\Id-\xi_{i,j}\otimes\xi_{i,j})|\leq C \dist^2(\cdot,I_{i,j})$ (the latter being a consequence of \eqref{eq:lenghtconstrxiij})
\begin{align*}
&\sumij \int_{\R^d}\nabla B : \left( \frac{\nabla (\psi_{i,j}  \circ u_\vareps)}{|\nabla (\psi_{i,j}  \circ u_\vareps)|}\otimes \frac{\nabla (\psi_{i,j}  \circ u_\vareps)}{|\nabla (\psi_{i,j}  \circ u_\vareps)|}   - \frac{ \nabla u_\vareps^\mathsf{T} \nabla u_\vareps }{|\nabla u_\vareps|^2} \right) \vareps |\nabla u_\vareps|^2 \chit (u_\vareps)\dx
\\&
\leq
C(\xi(t),B(t))
\sumij \int_{\R^d} \bigg( \bigg|(\Id-\xi_{i,j}\otimes \xi_{i,j}) \frac{\nabla (\psi_{i,j}  \circ u_\vareps)}{|\nabla (\psi_{i,j}  \circ u_\vareps)|} \bigg|^2 \vareps |\nabla u_\vareps|^2
+ \eps |(\Id-\xi_{i,j}\otimes \xi_{i,j})\nabla u_\vareps^\mathsf{T}|^2 \bigg) \chit (u_\vareps)\dx
\\&~~~
+C(\xi(t),B(t))
\sumij \int_{\R^d} \min \{ \dist(x, I_{i,j}) ,1\} \bigg( \bigg|(\Id-\xi_{i,j}\otimes \xi_{i,j}) \frac{\nabla (\psi_{i,j}  \circ u_\vareps)}{|\nabla (\psi_{i,j}  \circ u_\vareps)|} \bigg| \vareps |\nabla u_\vareps|^2 	\\
& \hspace{7.5cm}+ \vareps |(\Id-\xi_{i,j}\otimes \xi_{i,j})\nabla u_\vareps^\mathsf{T}| |\nabla u_\vareps| \bigg) \chit (u_\vareps)\dx
\\&~~~
+C(\xi(t),B(t))
\sumij \int_{\R^d} \min \{\dist^2(x, I_{i,j}) ,1\} \vareps |\nabla u_\vareps|^2 \chit (u_\vareps)\dx
\\&~~~
+C(\xi(t),B(t))
\sumij \int_{\R^d}  \Bigg| \bigg|\xi_{i,j} \cdot \frac{\nabla (\psi_{i,j}  \circ u_\vareps)}{|\nabla (\psi_{i,j}  \circ u_\vareps)|} \bigg|^2 \vareps |\nabla u_\vareps|^2 - \vareps |(\xi_{i,j} \cdot \nabla) u_\vareps|^2 \Bigg| \chit (u_\vareps)\dx
\\&
\leq
C(\xi(t),B(t))
\sumij \int_{\R^d} \bigg( \bigg|(\Id-\xi_{i,j}\otimes \xi_{i,j}) \frac{\nabla (\psi_{i,j}  \circ u_\vareps)}{|\nabla (\psi_{i,j}  \circ u_\vareps)|} \bigg|^2 \vareps |\nabla u_\vareps|^2
+ \eps |(\Id-\xi_{i,j}\otimes \xi_{i,j})\nabla u_\vareps^\mathsf{T}|^2 \bigg) \chit (u_\vareps)\dx
\\&~~~
+C(\xi(t),B(t))
\sumij \int_{\R^d} \Bigg( \bigg(1-\bigg|\xi_{i,j} \cdot \frac{\nabla (\psi_{i,j}  \circ u_\vareps)}{|\nabla (\psi_{i,j}  \circ u_\vareps)|} \bigg|^2 \bigg) \vareps |\nabla u_\vareps|^2\\
& \hspace{4.5cm} + \vareps \big(|\nabla u_\vareps|^2-|(\xi_{i,j} \cdot \nabla) u_\vareps|^2 \big)\Bigg) \chit (u_\vareps) \dx
\\&~~~
+C(\xi(t),B(t))
\sumij \int_{\R^d} \min \{\dist^2(x, I_{i,j}) ,1\} \vareps |\nabla u_\vareps|^2 \chit (u_\vareps)\dx,
\end{align*}
where in the last step we have used Young's inequality.
By the coercivity properties \eqref{eq:coerdist}, \eqref{eq:coernormalu}, \eqref{eq:coerxigradu}, and \eqref{eq:coertangentgradu}, we conclude that
\begin{align*}
\operatorname{Err}_{AllenCahn}
\leq
C(\xi(t),B(t))E[u_\vareps|\xi].
\end{align*}

\emph{Step 3: Estimate for $\operatorname{Err}_{MC\xi}$.}
For the estimate on $\operatorname{Err}_{MC\xi}$, we have to work a bit more. We begin by adding zeroes and using \eqref{eq:sum3}
to obtain
\begin{align}
&\operatorname{Err}_{MC\xi}
\notag
\\&
\leq
\sumij
\int_{\R^d} \frac{1}{2 \vareps}  \bigg| - \frac{1}{2} (\nabla \cdot \xi_{i,j}) \partial_u \psi_{i,j} (u_\vareps)  + \sumk \frac{1}{2} (\nabla \cdot \xi_{k}) \partial_u \psi_0 ( u_\vareps)  \bigg|^2    \chit (u_\vareps) \dx \notag
\\&~~~
+\sumij \int_{\R^d}  \frac12 (\nabla \cdot \xi_{i,j}) B  \cdot  \nabla(\psi_{i,j} \circ u_\vareps) \chit (u_\vareps)\dx \notag
\\&~~~
+ \sumij \int_{\R^d} \frac{\vareps}{2} |B \cdot {\xi}_{i,j}|^2|\nabla u_\vareps|^2 \chit (u_\vareps)\dx \notag
\\&~~~
+ \sumij \int_{\R^d} (\Id - \xi_{i,j} \otimes \xi_{i,j}):\h_\vareps \otimes B     |\nabla u_\vareps | \chit (u_\vareps)\dx \notag
\\&~~~
- \sumkij \int_{\R^d}  \frac12(\nabla \cdot \xi_k ) B  \cdot  \nabla(\psi_0 \circ u_\vareps)   \chit (u_\vareps)\dx \notag
\\&
= \sumij  \int_{\R^d} \frac{1}{2 }  \bigg|
\frac{1}{\sqrt{\vareps}} (\nabla \cdot \xi_{i,j}) \frac{\nabla (\psi_{i,j}  \circ u_\vareps)}{|\nabla (\psi_{i,j}  \circ u_\vareps)|} \otimes \frac12 \partial_u \psi_{i,j} (u_\vareps)  \otimes  \frac{\nabla (\psi_{i,j}  \circ u_\vareps)}{|\nabla (\psi_{i,j}  \circ u_\vareps)|}
 \notag
\\&~~~
\hspace{6cm}
+ \sqrt{\vareps} (B\cdot {\xi}_{i,j}) {\xi}_{i,j} \otimes \nabla u_\vareps  \bigg|^2   \chit (u_\vareps) \dx \notag
\\&~~~
+ \sumij \int_{\R^d} \frac{1}{2 \vareps}  \bigg| \sumk \frac{1}{2} (\nabla \cdot \xi_{k}) \partial_u\psi_0 (u_\vareps)  \bigg|^2   \chit (u_\vareps) \dx \notag
\\&~~~
- \sumkij  \int_{\R^d} \frac{1}{4 \vareps}   (\nabla \cdot \xi_{i,j})  (\nabla \cdot \xi_{k}) \partial_u \psi_{i,j} (u_\vareps)  \cdot  \partial_u \psi_0 ( u_\vareps)  \chit (u_\vareps) \dx \notag
\\&~~~
+ \sumij \int_{\R^d}  \frac12 (\nabla \cdot \xi_{i,j}) (\Id - \xi_{i,j} \otimes \xi_{i,j}):B \otimes  \nabla(\psi_{i,j} \circ u_\vareps)\chit (u_\vareps) \dx \notag
\\&~~~
+ \sumij \int_{\R^d} \frac{\vareps}{2} (1-|\xi_{i,j}|^2) |B \cdot {\xi}_{i,j}|^2 |\nabla u_\vareps|^2 \chit (u_\vareps)\dx \notag
\\&~~~
+\sumij  \int_{\R^d} (\Id - \xi_{i,j} \otimes \xi_{i,j}):\h_\vareps \otimes B   |\nabla u_\vareps | \chit (u_\vareps)\dx\notag
\\&~~~
-  \sumkij \int_{\R^d}  \frac12(\nabla \cdot \xi_k ) B  \cdot  \nabla(\psi_0 \circ u_\vareps) \chit (u_\vareps)  \dx, \label{eq:relentineq1}
\end{align}
where in the second step we have also used $\partial_u \psi_{i,j}(u_\vareps) \otimes \frac{\nabla (\psi_{i,j}\circ u_\vareps)}{|\nabla (\psi_{i,j}\circ u_\vareps)|}:\nabla u_\eps=|\nabla (\psi_{i,j}\circ u_\vareps)|$.

Now note that the three terms on the right-hand side of \eqref{eq:relentineq1} that involve a $\partial_u \psi_0(u_\vareps)$ or $\nabla (\psi_0\circ u_\vareps)$ can be directly estimated by $CE[u_\vareps|\xi]$ by relying on the coercivity properties \eqref{eq:coerpartialpsik}, \eqref{eq:coerpartialpsiijk}, and \eqref{eq:coergradpsik}. Similarly, the third-to-last term on the right-hand side is estimated by $CE[u_\vareps|\xi]$ using \eqref{eq:lenghtconstrxiij} and \eqref{eq:coerdist}. This shows
\begin{align}
&\operatorname{Err}_{MC\xi}
\\
&\leq
\sumij  \int_{\R^d} \frac{1}{2 }  \bigg|
\frac{1}{\sqrt{\vareps}} (\nabla \cdot \xi_{i,j}) \frac{\nabla (\psi_{i,j}  \circ u_\vareps)}{|\nabla (\psi_{i,j}  \circ u_\vareps)|} \otimes \frac12 \partial_u \psi_{i,j} (u_\vareps)  \otimes  \frac{\nabla (\psi_{i,j}  \circ u_\vareps)}{|\nabla (\psi_{i,j}  \circ u_\vareps)|}
 \notag
\\&~~~
\hspace{6cm}
+ \sqrt{\vareps} (B\cdot {\xi}_{i,j}) {\xi}_{i,j} \otimes \nabla u_\vareps  \bigg|^2   {\chit} (u_\vareps) \dx \notag
\\&~~~
+ \sumij \int_{\R^d}  \frac12 (\nabla \cdot \xi_{i,j}) (\Id - \xi_{i,j} \otimes \xi_{i,j}):B \otimes  \nabla(\psi_{i,j} \circ u_\vareps) {\chit}(u_\vareps) \dx \notag
\\&~~~
+\sumij  \int_{\R^d} (\Id - \xi_{i,j} \otimes \xi_{i,j}):\h_\vareps \otimes B   |\nabla u_\vareps | {\chit} (u_\vareps)\dx\notag
\\&~~~
+CE[u_\vareps|\xi]
.
\label{eq:relentineq1b}
\end{align}
By adding zeros, the first term on the right-hand side of \eqref{eq:relentineq1b} can be rewritten as
\begin{align*}
\sumij  \int_{\R^d}& \frac{1}{2 }  \bigg| [(B\cdot {\xi}_{i,j}) {\xi}_{i,j} + (\nabla \cdot \xi_{i,j})\xi_{i,j}]\otimes \sqrt{\vareps}  \nabla u_\varepsilon
\\&
+(\nabla \cdot \xi_{i,j}) \frac{\nabla (\psi_{i,j}  \circ u_\vareps)}{|\nabla (\psi_{i,j}  \circ u_\vareps)|}\otimes \left(\frac{1}{2\sqrt{\vareps}}  \partial_u \psi_{i,j} (u_\vareps)  \otimes  \frac{\nabla (\psi_{i,j}  \circ u_\vareps)}{|\nabla (\psi_{i,j}  \circ u_\vareps)|} - \sqrt{\vareps} \nabla u_\eps\right)
\\&
+(\nabla \cdot \xi_{i,j}) \left(\frac{\nabla (\psi_{i,j}  \circ u_\vareps)}{|\nabla (\psi_{i,j}  \circ u_\vareps)|} - \xi_{i,j} \right) \otimes \sqrt{\vareps} \nabla u_\vareps \bigg|^2   {\chit} (u_\vareps) \dx
\\
\leq  
\sumij \frac{3}{2}  &\int_{\R^d} \left| (B\cdot {\xi}_{i,j}) {\xi}_{i,j} + (\nabla \cdot \xi_{i,j})\xi_{i,j} \right|^2 \vareps |\nabla u_\vareps|^2  {\chit}(u_\vareps) \dx
\\
+ \sumij \frac{3}{2 }&  \|\nabla \cdot \xi_{i,j} \|^2_{L_x^\infty}  \int_{\R^d} \left|  \frac{1}{2\sqrt{\vareps}}  \partial_u \psi_{i,j} (u_\vareps)  \otimes  \frac{\nabla (\psi_{i,j}  \circ u_\vareps)}{|\nabla (\psi_{i,j}  \circ u_\vareps)|} - \sqrt{\vareps} \nabla u_\eps \right|^2 {\chit} (u_\vareps) \dx 
\\
+ \sumij  \frac{3}{2 } & \|\nabla \cdot \xi_{i,j} \|^2_{L_x^\infty}  \int_{\R^d}    \left|\frac{\nabla (\psi_{i,j}  \circ u_\vareps)}{|\nabla (\psi_{i,j}  \circ u_\vareps)|} - \xi_{i,j} \right|^2 \vareps |\nabla u_\vareps|^2  {\chit} (u_\vareps) \dx,
\end{align*}
where in the second step we have used Young's inequality. Using the property \eqref{eq:normalB} of the calibration $(\xi,B)$ and the coercivity propeties \eqref{eq:coerdist}, \eqref{eq:coersurfmeas} and \eqref{eq:coernormalu} of the relative energy, we see that the right-hand side is bounded by $CE[u_\vareps|\xi]$.
   
It remains to estimate the second and third term on the right-hand side of \eqref{eq:relentineq1b}. Adding zero, these terms are seen to be equal to
\begin{align*}
&\sumij \int_{\R^d} (\Id - \xi_{i,j} \otimes \xi_{i,j}): B  \otimes \nabla u_\vareps^\mathsf{T} \cdot \bigg( \frac12 (\nabla \cdot \xi_{i,j}) \partial_u \psi_{i,j} (u_\vareps)
\\&
 \hspace{1.5cm}- \sumk \frac12 (\nabla \cdot \xi_{k})  \partial_u \psi_{0} (u_\vareps) - \Big( \vareps \Delta u_\vareps - \frac{1}{\vareps} \partial_u W(u_\vareps )\Big)\bigg) {\chit} (u_\vareps) \dx
\\
+&\sumkij \int_{\R^d}  \frac12 (\nabla \cdot \xi_{k})    (\Id - \xi_{i,j} \otimes \xi_{i,j}): B  \otimes \nabla u_\vareps^\mathsf{T} \cdot  \partial_u \psi_{0} (u_\vareps)  {\chit} (u_\vareps) \dx
\\
\stackrel{\eqref{eq:sum3}}{\leq} &\,\sumij C(\bar\delta) \| B \|^2_{L_x^\infty}  \int_{\R^d}   \varepsilon |(\Id-\xi_{i,j}\otimes \xi_{i,j}) \nabla u_\vareps^\mathsf{T} |^2 {\chit} (u_\vareps) \dx
\\&
+ \bar\delta \int_{\R^d} \frac{1}{2\vareps} \Big| \Big(\vareps \Delta u_\vareps - \frac{1}{\vareps} \partial_u W(u_\vareps)\Big) + \sum^N_{i=1} (\nabla \cdot \xi_i) \partial_u \psi_i (u_\vareps) \Big|^2 \dx
\\&
+ \sumkij \frac{1}{2 }  \|\nabla \cdot \xi_{k} \|_{L_x^\infty} \| B \|_{L_x^\infty}  \int_{\R^d}  | \nabla ( \psi_{0} \circ u_\vareps) | {\chit} (u_\vareps) \dx.
\end{align*}
Here,
in the last step we have used Young's inequality for $\bar\delta>0$ small enough.
Using the coercivity properties \eqref{eq:coertangentgradu}, \eqref{eq:coerdist}, and \eqref{eq:coergradpsik} of the relative energy, we see that the first and last term on the right-hand side are bounded by $CE[u_\vareps|\xi]$.

Overall, we have shown
\begin{align*}
\operatorname{Err}_{MC\xi}\leq C(\bar\delta) E[u_\vareps|\xi]
+ \bar\delta \int_{\R^d} \frac{1}{2\vareps} \Big| \Big(\vareps \Delta u_\vareps - \frac{1}{\vareps} \partial_u W(u_\vareps)\Big) + \sum^N_{i=1} (\nabla \cdot \xi_i) \partial_u \psi_i (u_\vareps) \Big|^2 \dx,
\end{align*}
which was the only missing ingredient for the proof of the theorem.
\end{proof}

In the above estimates, we have used the following additional coercivity properties of the relative entropy. We shall defer their proof to that of the other coercivity properties from Proposition~\ref{prop:coerpartialpsi} and \ref{prop:maincoer}.
\begin{lemma}\label{lemma:additionalcoer}
Let $W$, $\psi_i$, $\psi_{i,j}$, $\xi_i$, and $\xi_{i,j}$ be as in Proposition~\ref{prop:coerpartialpsi}. We then have
	\begin{align}
	\label{eq:coernormalu}
	\sumij 	\int_{\R^d}    \left|\frac{\nabla (\psi_{i,j}  \circ u_\vareps)}{|\nabla (\psi_{i,j}  \circ u_\vareps)|} - \xi_{i,j} \right|^2
	\vareps |\nabla u_\vareps|^2  {\chit}(u_\vareps) \dx
	&	\leq C E[u_\varepsilon|\xi] ,\\
	 \label{eq:coersurfmeaslenght}
	\sumij  \int_{\R^d}  
	\left|  \frac{1}{2\sqrt{\vareps}}   \frac{|\nabla (\psi_{i,j}  \circ u_\vareps)|}{|\nabla u_\vareps|} - \sqrt{\vareps} |\nabla u_\eps | \right|^2   {\chit}(u_\vareps) \dx & \leq C E[u_\varepsilon|\xi] ,	
	\\ \label{eq:coersurfmeas}
	\sumij  \int_{\R^d}  
	\left|  \frac{1}{2\sqrt{\vareps}}  \partial_u \psi_{i,j} (u_\vareps)  \otimes  \frac{\nabla (\psi_{i,j}  \circ u_\vareps)}{|\nabla (\psi_{i,j}  \circ u_\vareps)|} - \sqrt{\vareps} \nabla u_\eps \right|^2  {\chit}(u_\vareps) \dx 
	&	\leq C E[u_\varepsilon|\xi] ,\\
		 \label{eq:coerxigradu}
		\sumij   \int_{\R^d} \left|  \xi_{i,j} \otimes \xi_{i,j} 
		- \frac{ \nabla u_\vareps^\mathsf{T} \nabla u_\vareps }{|\nabla u_\vareps|^2}
		\right|^2 \vareps |\nabla u_\vareps|^2 {\chit}(u_\vareps)\dx
		&\leq C E[u_\varepsilon|\xi] .
	\end{align}
\end{lemma}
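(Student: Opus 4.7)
The plan is to derive all four estimates from the pointwise sum-of-squares identity~\eqref{eq:startcoer0} and the coercivity bounds already established in Propositions~\ref{prop:coerpartialpsi} and~\ref{prop:maincoer}. Since the $\mathcal{T}_{i,j}$ partition $\overline{U}$ and both summands on the right-hand side of~\eqref{eq:startcoer0} are pointwise nonnegative (by (A4) and the length constraints on $(\xi_i)_i$), one has $\int f\,\chit(u_\eps)\dx\leq E[u_\vareps|\xi]$ whenever $f$ is bounded by either of those two squares. A companion observation I will use repeatedly is that $\int e_\eps\chit(u_\eps)\dx$ may be rewritten via~\eqref{eq:sum} in terms of the relative-energy density (bounded by $E[u_\vareps|\xi]$), a term proportional to $\int(\xi_{i,j}\cdot\nu_{i,j})|\nabla(\psi_{i,j}\circ u_\vareps)|\chit(u_\eps)\dx$, and a term proportional to $\int\sumk\xi_k\cdot\nabla(\psi_0\circ u_\vareps)\chit(u_\eps)\dx$, the latter controlled via~\eqref{eq:coergradpsik}. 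Here $\nu_{i,j}:=\nabla(\psi_{i,j}\circ u_\vareps)/|\nabla(\psi_{i,j}\circ u_\vareps)|$ denotes the pointwise diffuse-interface normal.

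The key algebraic identity, following from $\nabla u_\vareps^\mathsf{T}\partial_u\psi_{i,j}(u_\vareps) = |\nabla(\psi_{i,j}\circ u_\vareps)|\nu_{i,j}$ and $|\nu_{i,j}|=1$, is
\[
\Bigl|\tfrac{1}{2\sqrt{\vareps}}\partial_u\psi_{i,j}(u_\vareps)\otimes\nu_{i,j}-\sqrt{\vareps}\nabla u_\vareps\Bigr|^2 = \tfrac{|\partial_u\psi_{i,j}|^2}{4\vareps}+\vareps|\nabla u_\vareps|^2-|\nabla(\psi_{i,j}\circ u_\vareps)|.
\]
Using (A4) in the form $|\partial_u\psi_{i,j}|^2/(4\vareps)\leq 2W/\vareps$, this is pointwise at most $2e_\vareps-|\nabla(\psi_{i,j}\circ u_\vareps)|$. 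Integrating against $\chit(u_\vareps)$, applying the rewriting of $\int e_\vareps\chit\dx$ from the previous paragraph, and noting that the resulting coefficient $(\xi_{i,j}\cdot\nu_{i,j}-1)$ is pointwise nonpositive (since $|\xi_{i,j}|\leq 1$), I obtain~\eqref{eq:coersurfmeas} after summing over the finitely many $(i,j)$. The same calculation with the scalar $|\nabla(\psi_{i,j}\circ u_\vareps)|/|\nabla u_\vareps|\leq|\partial_u\psi_{i,j}|\leq 2\sqrt{2W}$ in place of $|\partial_u\psi_{i,j}|$ yields~\eqref{eq:coersurfmeaslenght}. For~\eqref{eq:coernormalu}, I rearrange the above identity pointwise into $\vareps|\nabla u_\vareps|^2\leq|\tfrac{1}{2\sqrt{\vareps}}\partial_u\psi_{i,j}\otimes\nu_{i,j}-\sqrt{\vareps}\nabla u_\vareps|^2+|\nabla(\psi_{i,j}\circ u_\vareps)|$, multiply by the bounded factor $|\nu_{i,j}-\xi_{i,j}|^2\leq 4$, and conclude via the just-proved~\eqref{eq:coersurfmeas} together with~\eqref{eq:coernormalpsi}.

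The main obstacle is~\eqref{eq:coerxigradu}, where I need to simultaneously transfer control from $\xi_{i,j}$ to $\nu_{i,j}$ and manage the possibly higher-rank structure of $\nabla u_\vareps$. I split via the triangle inequality
\[
\xi_{i,j}\otimes\xi_{i,j}-\tfrac{\nabla u_\vareps^\mathsf{T}\nabla u_\vareps}{|\nabla u_\vareps|^2} = \bigl(\xi_{i,j}\otimes\xi_{i,j}-\nu_{i,j}\otimes\nu_{i,j}\bigr)+\Bigl(\nu_{i,j}\otimes\nu_{i,j}-\tfrac{\nabla u_\vareps^\mathsf{T}\nabla u_\vareps}{|\nabla u_\vareps|^2}\Bigr).
\]
For the first piece I use $|\xi\otimes\xi-\nu\otimes\nu|\leq(|\xi|+|\nu|)|\xi-\nu|\leq 2|\xi-\nu|$ and then invoke~\eqref{eq:coernormalu}. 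For the second, I decompose $\nabla u_\vareps=v\otimes\nu_{i,j}+M$ with $v:=\nabla u_\vareps\nu_{i,j}$ and $M:=\nabla u_\vareps(\Id-\nu_{i,j}\otimes\nu_{i,j})$ (so $M\nu_{i,j}=0$ and $|v|^2+|M|^2=|\nabla u_\vareps|^2$); an elementary calculation using $|M^\mathsf{T}v|\leq|M||v|$ and the Frobenius bound $|M^\mathsf{T}M|\leq|M|^2$ yields $|\nu\otimes\nu-\nabla u_\vareps^\mathsf{T}\nabla u_\vareps/|\nabla u_\vareps|^2|\cdot|\nabla u_\vareps|\leq C|M|$, and hence $\vareps|\nabla u_\vareps|^2|\nu\otimes\nu-\nabla u_\vareps^\mathsf{T}\nabla u_\vareps/|\nabla u_\vareps|^2|^2\leq C\vareps|(\Id-\nu_{i,j}\otimes\nu_{i,j})\nabla u_\vareps^\mathsf{T}|^2$. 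Since $(\Id-\nu\otimes\nu)\nabla u_\vareps^\mathsf{T}$ and $(\Id-\xi\otimes\xi)\nabla u_\vareps^\mathsf{T}$ differ by a term of Frobenius norm at most $2|\nu-\xi||\nabla u_\vareps|$, the desired estimate follows by combining~\eqref{eq:coertangentgradu} with~\eqref{eq:coernormalu}.
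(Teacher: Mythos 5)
Your proof is correct, and for the first three estimates it proceeds in essentially the same spirit as the paper: the starting point is the same pointwise expansion identity $\bigl|\tfrac{1}{2\sqrt\vareps}\partial_u\psi_{i,j}\otimes\nu_{i,j}-\sqrt\vareps\nabla u_\vareps\bigr|^2 = \tfrac{1}{4\vareps}|\partial_u\psi_{i,j}|^2+\vareps|\nabla u_\vareps|^2-|\nabla(\psi_{i,j}\circ u_\vareps)|$, the bound $\tfrac14|\partial_u\psi_{i,j}|^2\le 2W$ from (A4), and the rewriting via \eqref{eq:sum} of $\int e_\vareps\chit\dx$ into the relative energy plus sign-definite remainders; the only organizational difference is that you derive \eqref{eq:coernormalu} from the already-established \eqref{eq:coersurfmeas} combined with \eqref{eq:coernormalpsi}, whereas the paper proves \eqref{eq:coernormalu} independently from the inequality \eqref{eq:ineqforgradu} together with \eqref{eq:coergradpsik} and \eqref{eq:coernormalpsi}.

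Where you genuinely diverge from the paper is \eqref{eq:coerxigradu}. Your triangle-inequality split through $\nu_{i,j}\otimes\nu_{i,j}$ followed by the rank-one decomposition $\nabla u_\vareps = v\otimes\nu_{i,j}+M$ is correct, but it is more work than the paper needs: the paper simply expands the Frobenius square directly,
\begin{equation*}
\Bigl|\xi_{i,j}\otimes\xi_{i,j}-\tfrac{\nabla u_\vareps^\mathsf{T}\nabla u_\vareps}{|\nabla u_\vareps|^2}\Bigr|^2
= |\xi_{i,j}|^4 + \tfrac{|\nabla u_\vareps^\mathsf{T}\nabla u_\vareps|^2}{|\nabla u_\vareps|^4} - 2\tfrac{\xi_{i,j}\cdot\nabla u_\vareps^\mathsf{T}\nabla u_\vareps\,\xi_{i,j}}{|\nabla u_\vareps|^2}
\le 2 - 2\tfrac{|(\xi_{i,j}\cdot\nabla)u_\vareps|^2}{|\nabla u_\vareps|^2},
\end{equation*}
using $|\xi_{i,j}|\le 1$ and $|\nabla u_\vareps^\mathsf{T}\nabla u_\vareps|_F\le|\nabla u_\vareps|_F^2$, then converts $\vareps|\nabla u_\vareps|^2-\vareps|(\xi_{i,j}\cdot\nabla)u_\vareps|^2$ into $\vareps|(\Id-\xi_{i,j}\otimes\xi_{i,j})\nabla u_\vareps^\mathsf{T}|^2$ up to a $\min\{\dist^2,1\}\,\vareps|\nabla u_\vareps|^2$ correction controlled by \eqref{eq:coerdist}. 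This one-line expansion avoids passing through $\nu_{i,j}\otimes\nu_{i,j}$ altogether, hence needs neither \eqref{eq:coernormalu} nor your decomposition of $\nabla u_\vareps$, although your argument is also valid and has the mild advantage of isolating the geometric normal $\nu_{i,j}$, which some readers may find more transparent. In fact, the same Frobenius-square trick would shortcut your intermediate claim $|\nu_{i,j}\otimes\nu_{i,j}-\nabla u_\vareps^\mathsf{T}\nabla u_\vareps/|\nabla u_\vareps|^2|\,|\nabla u_\vareps|\le C|M|$: since $|\nu_{i,j}|=1$, the left square is bounded by $2-2|\nabla u_\vareps\nu_{i,j}|^2/|\nabla u_\vareps|^2 = 2|M|^2/|\nabla u_\vareps|^2$ directly, avoiding the componentwise Frobenius estimates.
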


\subsection{Time evolution of the relative energy}

We next give the technical computation that provides the estimate for the evolution of the relative entropy stated in Lemma~\ref{LemmaRelativeEnergyEvolution}. Although in parts technical, it is at the very heart of the proof of our results.
\begin{proof}[Proof of Lemma~\ref{LemmaRelativeEnergyEvolution}]
	By direct computations, using the definitions \eqref{eq:relativeenergyfirst} and \eqref{eq:AC} as well as (an analogue for $\partial_t \xi_i$ of) the relation \eqref{eq:sum}, we obtain
	\begin{align*}
			\ddt E[u_\varepsilon|\xi]
			=&
			- \int_{\R^d} \left( \vareps \Delta u_\vareps - \frac{1}{\vareps} \partial_u W(u_\vareps)\right) \partial_t u_\vareps  \dx
			\\
			& - \sum^N_{i=1} \int_{\R^d} (\nabla \cdot \xi_i) \partial_u \psi_i (u_\vareps) \cdot \partial_t u_\vareps  \dx
			\\
			& + \sum^N_{i=1} \int_{\R^d} \partial_t \xi_i \cdot \nabla (\psi_i \circ u_\vareps) \dx
			\\
			=&
			- \int_{\R^d} \frac{1}{\vareps}\left| \vareps \Delta u_\vareps - \frac{1}{\vareps} \partial_u W(u_\vareps)\right|^2  \dx \\
			& - \sum^N_{i=1} \int_{\R^d} (\nabla \cdot \xi_i) \partial_u \psi_i (u_\vareps) \cdot \left( \Delta u_\vareps - \frac{1}{\vareps^2} \partial_u W(u_\vareps) \right)  \dx
			\\
			& - \sumij \int_{\R^d} \frac12 \partial_t \xi_{i,j} \cdot \nabla (\psi_{i,j} \circ u_\vareps ) \chit(u_\vareps) \dx \\
			& + \sumkij  \int_{\R^d} \frac12 \partial_t \xi_{k} \cdot   \nabla (\psi_{0} \circ u_\vareps ) \chit (u_\vareps) \dx.
	\end{align*}
	By adding zeros and using again \eqref{eq:sum} as well as \eqref{eq:sum2}, we get
	\begin{align}
	\nonumber
	&\ddt E[u_\varepsilon|\xi]
	\\
\label{eqEvolutionRelEntropyFirstComputation}
	&= - \int_{\R^d} \frac{1}{\vareps}\left| \vareps \Delta u_\vareps - \frac{1}{\vareps} \partial_u W(u_\vareps)\right|^2 \dx \\ \nonumber
	&~~~ - \sum^N_{i=1} \int_{\R^d} (\nabla \cdot \xi_i) \partial_u \psi_i (u_\vareps) \cdot \left( \Delta u_\vareps - \frac{1}{\vareps^2} \partial_u W(u_\vareps) \right) \dx
	\\ 	\nonumber
	&~~~ - \sumij \int_{\R^d} \frac12 (\partial_t \xi_{i,j} + (B\cdot \nabla ) \xi_{i,j}  + (\nabla B)^\mathsf{T}\xi_{i,j} )\cdot \nabla (\psi_{i,j} \circ u_\vareps ) \chit (u_\vareps) \dx \\ \nonumber
	&~~~ + \sumkij \int_{\R^d} \frac12  (\partial_t \xi_{k} + (B\cdot \nabla ) \xi_{k}  + (\nabla B)^\mathsf{T}\xi_{k} )\cdot   \nabla (\psi_{0} \circ u_\vareps ) 	\chit (u_\vareps) \dx \\ \nonumber
	&~~~ - \sum^N_{i=1} \int_{\R^d}  \nabla \xi_i :  \nabla(\psi_i \circ u_\vareps) \otimes B \dx - \sum^N_{i=1} \int_{\R^d}   \nabla B : \xi_i \otimes \nabla (\psi_i \circ u_\vareps)  \dx .
	\end{align}
Integrating by parts several times and making use of an approximation argument for $((\xi_i)_i,B)$, the last two terms in the equation above can be rewritten as
	\begin{align*}
		 -\sum^N_{i=1} & \int_{\R^d}  \nabla \xi_i :  \nabla(\psi_i \circ u_\vareps) \otimes B \dx - \sum^N_{i=1}\int_{\R^d}   \nabla B : \xi_i \otimes \nabla (\psi_i \circ u_\vareps)  \dx \\
		 = & \sum^N_{i=1} \int_{\R^d}  \psi_i( u_\vareps) ( B \cdot \nabla ) ( \nabla \cdot \xi_i)   \dx 
		 +\sum^N_{i=1} \int_{\R^d} \psi_i( u_\vareps) (\nabla B)^\mathsf{T} : \nabla \xi_i \dx \\
		 &- \sum^N_{i=1} \int_{\R^d}   \nabla B : \xi_i \otimes \nabla (\psi_i \circ u_\vareps)  \dx \\
		  = & - \sum^N_{i=1} \int_{\R^d}   (\nabla \cdot \xi_i ) B  \cdot  \nabla(\psi_i \circ u_\vareps) \dx 
		 - \sum^N_{i=1} \int_{\R^d} \psi_i( u_\vareps) (\nabla \cdot B) (\nabla  \cdot \xi_i) \dx \\
		 & - \sum^N_{i=1} \int_{\R^d}  \psi_i( u_\vareps) ( \xi_i \cdot \nabla ) ( \nabla \cdot B )   \dx 
		 - \sum^N_{i=1} \int_{\R^d}  (\nabla B)^\mathsf{T} : \xi_i \otimes  \nabla(\psi_i \circ u_\vareps) \dx \\
		 &- \sum^N_{i=1} \int_{\R^d}   \nabla B : \xi_i \otimes \nabla (\psi_i \circ u_\vareps)  \dx \\
		 = &  \sum^N_{i=1} \int_{\R^d}  ((\nabla \cdot B ) \xi_i - (\nabla \cdot \xi_i ) B )  \cdot  \nabla(\psi_i \circ u_\vareps) \dx \\
		 & -\sum^N_{i=1} \int_{\R^d}   \nabla B : \nabla (\psi_i \circ u_\vareps) \otimes \xi_i  \dx 
		 - \sum^N_{i=1} \int_{\R^d}   \nabla B : \xi_i \otimes \nabla (\psi_i \circ u_\vareps)  \dx \\
		 \stackrel{\eqref{eq:sum}}{=} &  \sum^N_{i=1} \int_{\R^d}  ((\nabla \cdot B ) \xi_i - (\nabla \cdot \xi_i ) B )  \cdot  \nabla(\psi_i \circ u_\vareps) \dx \\
		 & + \sumij \int_{\R^d}\frac12 \nabla B : \nabla (\psi_{i,j}  \circ u_\vareps) \otimes \xi_{i,j}  \chit (u_\vareps)  \dx  \\
		 &+ \sumij \int_{\R^d} \frac12\nabla B : \xi_{i,j}\otimes \nabla (\psi_{i,j} \circ u_\vareps)   \chit (u_\vareps)\dx\\
		 & - \sumkij  \int_{\R^d}\frac12 \nabla B : \nabla (\psi_{0}  \circ u_\vareps)   \otimes \xi_{k}  \chit (u_\vareps)  \dx  \\
		 &- \sumkij \int_{\R^d} \frac12\nabla B : \xi_{k}\otimes  \nabla (\psi_{0}  \circ u_\vareps)   \chit (u_\vareps)\dx.
\end{align*}
By adding zero, we obtain
\begin{align}
\nonumber
 -&\sum^N_{i=1}  \int_{\R^d}  \nabla \xi_i :  \nabla(\psi_i \circ u_\vareps) \otimes B \dx - \sum^N_{i=1}  \int_{\R^d}   \nabla B : \xi_i \otimes \nabla (\psi_i \circ u_\vareps)  \dx \\ \label{eqRewriteTwoTerms}
		 = &  \sum^N_{i=1}  \int_{\R^d}  ((\nabla \cdot B ) \xi_i - (\nabla \cdot \xi_i ) B )  \cdot  \nabla(\psi_i \circ u_\vareps) \dx \\ \nonumber
		  & - \sumij \int_{\R^d}\frac12 \nabla B : \left( \xi_{i,j} - \frac{\nabla (\psi_{i,j}  \circ u_\vareps)}{|\nabla (\psi_{i,j}  \circ u_\vareps)|}  \right) \otimes  \left( \xi_{i,j} - \frac{\nabla (\psi_{i,j}  \circ u_\vareps)}{|\nabla (\psi_{i,j}  \circ u_\vareps)|}  \right) \\ \nonumber
		  & \hspace{7.5cm}
		   \times |\nabla (\psi_{i,j}  \circ u_\vareps)|  {\chit}(u_\vareps)  \dx \\ \nonumber
		  & + \sumij \int_{\R^d} \frac12\nabla B : \frac{\nabla (\psi_{i,j}  \circ u_\vareps)}{|\nabla (\psi_{i,j}  \circ u_\vareps)|}\otimes \frac{\nabla (\psi_{i,j}  \circ u_\vareps)}{|\nabla (\psi_{i,j}  \circ u_\vareps)|}  |\nabla (\psi_{i,j}  \circ u_\vareps)|  {\chit}(u_\vareps)\dx \\ \nonumber
		  &+ \sumij \int_{\R^d} \frac12\nabla B : \xi_{i,j} \otimes \xi_{i,j}   |\nabla (\psi_{i,j}  \circ u_\vareps)| {\chit}(u_\vareps)\dx \\ \nonumber
		   & -  \sumkij  \int_{\R^d}\frac12 \nabla B :  \nabla (\psi_{0}  \circ u_\vareps)  \otimes \xi_{k}   {\chit}(u_\vareps)  \dx  \\ \nonumber
		   &-  \sumkij \int_{\R^d} \frac12\nabla B : \xi_{k}\otimes  \nabla (\psi_{0}  \circ u_\vareps)   {\chit}(u_\vareps)\dx.
	\end{align}
	Using the relation
	\begin{align*}
		- &\int_{\R^d} \h_\vareps \cdot B |\nabla u_\vareps | \dx \\
		&\stackrel{\eqref{eq:h}}{=} - \int_{\R^d} \vareps \nabla B : \nabla u_\vareps^\mathsf{T} \nabla u_\vareps \dx + \int_{\R^d} (\nabla \cdot B) \left( \frac{\vareps}{2} |\nabla u_\vareps|^2 + \frac1\vareps W(u_\vareps)\right) \dx ,
	\end{align*}
in view of $\sum^N_{i,j=1:i < j}  {\chit}(u_\vareps) = 1$ we can rewrite the third term on the right-hand side as
	\begin{align*}
		&\sumij \int_{\R^d} \frac12\nabla B : \frac{\nabla (\psi_{i,j}  \circ u_\vareps)}{|\nabla (\psi_{i,j}  \circ u_\vareps)|}\otimes \frac{\nabla (\psi_{i,j}  \circ u_\vareps)}{|\nabla (\psi_{i,j}  \circ u_\vareps)|}  |\nabla (\psi_{i,j}  \circ u_\vareps)|  {\chit}(u_\vareps)\dx \\
		&=   \int_{\R^d} \h_\vareps \cdot B |\nabla u_\vareps | \dx + \int_{\R^d} (\nabla \cdot B) \left( \frac{\vareps}{2} |\nabla u_\vareps|^2 + \frac1\vareps W(u_\vareps)\right) \dx	 \\
		& \quad + \sumij \int_{\R^d}\nabla B : \bigg( \frac{\nabla (\psi_{i,j}  \circ u_\vareps)}{|\nabla (\psi_{i,j}  \circ u_\vareps)|}\otimes \frac{\nabla (\psi_{i,j}  \circ u_\vareps)}{|\nabla (\psi_{i,j}  \circ u_\vareps)|}  \frac12|\nabla (\psi_{i,j}  \circ u_\vareps) | -\vareps \nabla u_\vareps^\mathsf{T} \nabla u_\vareps  \bigg) \\
		& \hspace{10.5cm}\times  {\chit}(u_\vareps)\dx.
	\end{align*}
	Inserting this relation into \eqref{eqRewriteTwoTerms} and inserting the resulting equation into \eqref{eqEvolutionRelEntropyFirstComputation}, we obtain by collecting terms and adding and subtracting $\sum^N_{i,j=1:i < j} \int_{\R^d} \frac12  \xi_{i,j} \cdot ( \partial_t \xi_{i,j} + (B\cdot \nabla ) \xi_{i,j}   ) |\nabla (\psi_{i,j}  \circ u_\vareps)|  {\chit}(u_\vareps) \dx$
	\begin{align}
	&\ddt E[u_\varepsilon|\xi] \notag
	\\
	&= 	 - \int_{\R^d} \frac{1}{\vareps}\left| \vareps \Delta u_\vareps - \frac{1}{\vareps} \partial_u W(u_\vareps)\right|^2 \dx+  \int_{\R^d} \h_\vareps \cdot B |\nabla u_\vareps | \dx 
	\notag
	\\
	&\quad - \sum^N_{i=1} \int_{\R^d} (\nabla \cdot \xi_i) \partial_u \psi_i (u_\vareps) \cdot \left( \Delta u_\vareps - \frac{1}{\vareps^2} \partial_u W(u_\vareps) \right) \dx \notag
	\\
	&\quad- \sum^N_{i=1} \int_{\R^d}  (\nabla \cdot \xi_i ) B  \cdot  \nabla(\psi_i \circ u_\vareps) \dx \notag
	\\	
	&\quad+ \sumij  \int_{\R^d} \frac12 (\partial_t \xi_{i,j} + (B\cdot \nabla ) \xi_{i,j}  + (\nabla B)^\mathsf{T}\xi_{i,j} )\notag\\
	& \quad\hspace{2cm}\cdot \left( \xi_{i,j} - \frac{\nabla (\psi_{i,j}  \circ u_\vareps)}{|\nabla (\psi_{i,j}  \circ u_\vareps)|}  \right)   |\nabla (\psi_{i,j}  \circ u_\vareps)|  {\chit}(u_\vareps) \dx \notag
	\\
	& \quad-  \sumij \int_{\R^d} \frac12  \xi_{i,j} \cdot ( \partial_t \xi_{i,j} + (B\cdot \nabla ) \xi_{i,j}   ) |\nabla (\psi_{i,j}  \circ u_\vareps)|  {\chit}(u_\vareps) \dx \notag
	\\
	&\quad+  \int_{\R^d} (\nabla \cdot B) \Big( \frac{\vareps}{2} |\nabla u_\vareps|^2 + \frac1\vareps W(u_\vareps) + \sum_{i=1}^N \xi_i \cdot \nabla (\psi_i \circ u_\vareps)\Big) \dx	 \notag
	\\
	&\quad + \sumkij \int_{\R^d} \frac12  (\partial_t \xi_{k} + (B\cdot \nabla ) \xi_{k}  + (\nabla B)^\mathsf{T}\xi_{k} )\cdot   \nabla (\psi_{0}  \circ u_\vareps)  {\chit}(u_\vareps)  \dx \notag
	\\
	&\quad -\sumkij \int_{\R^d}\frac12 \nabla B : \nabla (\psi_{0}  \circ u_\vareps) \otimes \xi_{k}  {\chit}(u_\vareps)  \dx \notag
	\\ &\quad- \sumkij \int_{\R^d} \frac12\nabla B : \xi_{k}\otimes  \nabla (\psi_{0}  \circ u_\vareps) {\chit}(u_\vareps)\dx \notag
	\\
	& \quad-  \sumij \int_{\R^d}\frac12 \nabla B : \left( \xi_{i,j} - \frac{\nabla (\psi_{i,j}  \circ u_\vareps)}{|\nabla (\psi_{i,j}  \circ u_\vareps)|}  \right)\otimes  \left( \xi_{i,j} - \frac{\nabla (\psi_{i,j}  \circ u_\vareps)}{|\nabla (\psi_{i,j}  \circ u_\vareps)|}  \right) 
	  \notag 
	\\
	& \quad \hspace{8.0cm} \times |\nabla (\psi_{i,j}  \circ u_\vareps)| {\chit}(u_\vareps)  \dx \notag
	\\	
	& \quad +  \sumij \int_{\R^d}\nabla B : \bigg( \frac{\nabla (\psi_{i,j}  \circ u_\vareps)}{|\nabla (\psi_{i,j}  \circ u_\vareps)|}\otimes \frac{\nabla (\psi_{i,j}  \circ u_\vareps)}{|\nabla (\psi_{i,j}  \circ u_\vareps)|}  \frac12|\nabla (\psi_{i,j}  \circ u_\vareps) | -\vareps \nabla u_\vareps^\mathsf{T} \nabla u_\vareps  \bigg) \notag
	\\
	& \hspace{10.3cm} \times  {\chit}(u_\vareps)\dx . \notag
	\end{align}
Lemma~\ref{LemmaRelativeEnergyEvolution} follows from this equation using the definitions of the errors \eqref{Errinstab} --  \eqref{ErrOtherPhases} and the next formula (whose derivation relies on $\sum^N_{i,j=1:i < j} {\chit}(u_\vareps) = 1$ and repeated addition of zero).
	\begin{align*}
	&- \int_{\R^d} \frac{1}{\vareps}\left| \vareps \Delta u_\vareps - \frac{1}{\vareps} \partial_u W(u_\vareps)\right|^2 +  \int_{\R^d} \h_\vareps \cdot B |\nabla u_\vareps | \dx
	\\&
	- \sum^N_{i=1} \int_{\R^d} (\nabla \cdot \xi_i) \partial_u \psi_i (u_\vareps) \cdot \left( \Delta u_\vareps - \frac{1}{\vareps^2} \partial_u W(u_\vareps) \right) \dx
	\\&
	- \sum^N_{i=1} \int_{\R^d}  (\nabla \cdot \xi_i ) B  \cdot  \nabla(\psi_i \circ u_\vareps) \dx
	\\&
	= - \sumij  \int_{\R^d} \frac{1}{2 \vareps}\big|\h_\vareps - \vareps  (B \cdot {\xi}_{i,j}){\xi}_{i,j} |\nabla u_\vareps| \big|^2  {\chit}(u_\vareps)\dx
	\\&~~~
	- \int_{\R^d} \frac{1}{2 \vareps} \left( \Big|\vareps \Delta u_\vareps - \frac{1}{\vareps} \partial_u W(u_\vareps)\Big|^2 - |\h_\vareps|^2\right) \dx
	\\&~~~
	- \int_{\R^d} \frac{1}{2 \vareps}  \left|\vareps \Delta u_\vareps - \frac{1}{\vareps} \partial_u W(u_\vareps)\right|^2  \dx
	\\&~~~
	- \sum^N_{i=1}\int_{\R^d} (\nabla \cdot \xi_i) \partial_u \psi_i (u_\vareps) \cdot \left( \Delta u_\vareps - \frac{1}{\vareps^2} \partial_u W(u_\vareps) \right) \dx
	\\&~~~
	- \sum^N_{i=1} \int_{\R^d}  (\nabla \cdot \xi_i ) B  \cdot  \nabla(\psi_i \circ u_\vareps) \dx 
	\\&~~~
	+ \sumij  \int_{\R^d} \frac{\vareps}{2} |B \cdot {\xi}_{i,j}|^2  |\xi_{i,j}|^2 |\nabla u_\vareps|^2{\chit}(u_\vareps)\dx
	\\&~~~
	+ \sumij  \int_{\R^d} (\Id - \xi_{i,j} \otimes \xi_{i,j}):\h_\vareps \otimes B   |\nabla u_\vareps | {\chit}(u_\vareps)\dx
	\\&
	= - \sumij  \int_{\R^d} \frac{1}{2 \vareps}\big|\h_\vareps - \vareps  (B \cdot {\xi}_{i,j}){\xi}_{i,j} |\nabla u_\vareps| \big|^2  {\chit}(u_\vareps)\dx
	\\&~~~
	- \int_{\R^d} \frac{1}{2 \vareps}  \left| \Big(\vareps \Delta u_\vareps - \frac{1}{\vareps} \partial_u W(u_\vareps)\Big) + \sum^N_{i=1} (\nabla \cdot \xi_i) \partial_u \psi_i (u_\vareps) \right|^2 \dx
	\\&~~~
	- \int_{\R^d} \frac{1}{2 \vareps} \left( \Big|\vareps \Delta u_\vareps - \frac{1}{\vareps} \partial_u W(u_\vareps)\Big|^2 - |\h_\vareps|^2\right) \dx
	\\&~~~
	+  \int_{\R^d} \frac{1}{2 \vareps}  \left| \sum^N_{i=1}(\nabla \cdot \xi_i) \partial_u \psi_i (u_\vareps) \right|^2 \dx
	\\&~~~
	- \sum^N_{i=1} \int_{\R^d}  (\nabla \cdot \xi_i ) B  \cdot  \nabla(\psi_i \circ u_\vareps) \dx
	\\&~~~
	+ \sumij  \int_{\R^d} \frac{\vareps}{2} |B \cdot {\xi}_{i,j}|^2 |\xi_{i,j}|^2 |\nabla u_\vareps|^2 {\chit}(u_\vareps)\dx 
	\\&~~~
	+ \sumij  \int_{\R^d} (\Id - \xi_{i,j} \otimes \xi_{i,j}):\h_\vareps \otimes B    |\nabla u_\vareps | {\chit}(u_\vareps)\dx .
	\end{align*}
\end{proof}

\subsection{Derivation of the coercivity properties}

We next show how our assumption (A4) implies the coercivity properties of our relative entropy.
\begin{proof}[Proof of Proposition \ref{prop:coerpartialpsi}]
To prove \eqref{eq:coerpartialpsik}-\eqref{eq:coergradpsik}, let $i,j\in \{1,\ldots, N\}$, $i\neq j$; suppose that $(x,t)$ is such that $u_\vareps(x,t) \in \mathcal{T}_{i,j}$. In particular, we then have \(\chit(u_\vareps)=1\).
	
  \emph{Proof of \eqref{eq:coerpartialpsik} and \eqref{eq:coerpartialpsiijk}:} Starting from \eqref{eq:startcoer0}, expanding the second square, and making use of Young's inequality, the fact that for each $(x,t)$ there exists only one index $k\in \{1,\ldots,N\}\setminus \{i,j\}$ with $\xi_k(x,t)\neq 0$, and \eqref{AssumptionLengthXi}, we obtain
   \begin{align*}
   &\frac{\vareps}{2}|\nabla u_\vareps|^2
   +\frac{1}{\vareps} W(u_\vareps)
   +\sum^N_{\ell=1} \xi_\ell \cdot \nabla (\psi_\ell \circ u_\vareps)
   \\&
   \geq
   \frac{1}{2}\bigg|\sqrt{\vareps}\nabla u_\vareps - \frac{1}{\sqrt{\vareps}} \bigg(\tfrac{1}{2}\partial_u \psi_{i,j} (u_\vareps)\otimes \xi_{i,j} 
- \sumk \tfrac1{2\sqrt{3}} \partial_u \psi_0 (u_\vareps)
\otimes \sqrt{3}\xi_k 
\bigg) \bigg|^2
\\&~~~
+\frac{1}{2\vareps} \Bigg[ 2W(u_\vareps)-\bigg(|\xi_{i,j}|^2 + \frac1{\delta_{\text{cal}}}  \sum^N_{\substack{ k=1 \\ k \notin \{i,j\}}}  |\sqrt{3} \xi_{i,j} \cdot \xi_{k}|^2\bigg) \big|\tfrac{1}{2}\partial_u \psi_{i,j} (u_\vareps) \big|^2\\
   &\quad\quad \; \;  - \sumk  \left( |\sqrt{3}\xi_{k}|^2 + \delta_{\text{cal}}  \right)   \big|    \tfrac1{2\sqrt{3}} \partial_u \psi_0 (u_\vareps)\big|^2 
   \,\Bigg] .
   \end{align*}
   Then, by adding zeros and using $|\sqrt{3}\xi_k|\leq 1$, we obtain 
   \begin{align} \label{eq:startcoer}
   &\frac{\vareps}{2}|\nabla u_\vareps|^2
   +\frac{1}{\vareps} W(u_\vareps)
   +\sum^N_{\ell=1}  \xi_\ell \cdot \nabla (\psi_\ell \circ u_\vareps)
   \notag\\&
   \geq
   \frac{1}{2}\bigg|\sqrt{\vareps}\nabla u_\vareps - \frac{1}{\sqrt{\vareps}} \bigg(\tfrac{1}{2}\partial_u \psi_{i,j} (u_\vareps)\otimes \xi_{i,j} 
- \sumk \tfrac1{2\sqrt{3}} \partial_u \psi_0 (u_\vareps)
\otimes \sqrt{3}\xi_k 
\bigg) \bigg|^2
   \\&~~~
   +\frac{1}{2\vareps} \Bigg[ 2W(u_\vareps)- \big|\tfrac{1}{2}\partial_u \psi_{i,j}(u_\vareps)\big|^2 \notag\\
   & ~~~~~ \quad\quad \; \;  -( 1 +   \delta_{\text{cal}} + \delta_{\text{coer}, 1} ) \big|\tfrac{1}{2\sqrt{3}}\partial_u \psi_0(u_\vareps) \big|^2	\notag\\	
   &
   \quad~~ \quad\quad \; \; -  \delta_{\text{coer}, 2}    \left| \partial_u \psi_{i,j} (u_\vareps)\cdot   \partial_u \psi_{0} (u_\vareps) \right| 
   \,\Bigg] \notag\\&
   ~~~+ \frac{ \delta_{\text{coer}, 1}}{2\vareps}  \big|\tfrac{1}{2\sqrt{3}}\partial_u \psi_0(u_\vareps) \big|^2 \notag\\
\label{EstimateCoerIntermediate}
    & ~~~+  \frac{ \delta_{\text{coer}, 2}}{2\vareps}  \left| \partial_u \psi_{i,j}  (u_\vareps) \cdot   \partial_u \psi_{0}(u_\vareps) \right| 
   \,,
   \end{align}
   where \(\delta_{\text{coer}, 1},\delta_{\text{coer}, 2}, \delta_{\text{coer}, 3} >0\) are arbitrarily small constants.
   Finally, using (A4) for a suitable $\delta$ given by the gradient-flow calibration and integrating over the set $\{x:u_\eps(x,t)\in \mathcal{T}_{i,j} \}$, we can conclude about the validity of \eqref{eq:coerpartialpsik} and \eqref{eq:coerpartialpsiijk}.
   
   	\emph{Proof of \eqref{eq:coergradpsik}:} By adding zero, we can write
   \begin{align*}
   &	 \nabla (\psi_0 \circ u_\vareps) = \partial_u \psi_0 (u_\vareps) \cdot \nabla u_\vareps \\
   &	=
   \frac{1}{\sqrt{\vareps}} \partial_u \psi_0(u_\vareps) \cdot \bigg[
   \sqrt{\vareps}\nabla u_\vareps - \frac{1}{\sqrt{\vareps}} \bigg(\tfrac{1}{2}\partial_u \psi_{i,j} \otimes \xi_{i,j} - \sumk  \tfrac1{2\sqrt{3}} \partial_u \psi_0 (u_\vareps) \otimes \sqrt{3}\xi_k  \bigg)	
   \bigg]\\
   &\quad 	+\frac{1}{2\vareps}  \partial_u \psi_0(u_\vareps) \cdot 
   \partial_u \psi_{i,j} (u_\vareps)\xi_{i,j}
   - \frac{1}{\vareps}  \sumk   \tfrac1{2\sqrt{3}}  |\partial_u \psi_0 (u_\vareps) |^2 \sqrt{3}\xi_k \,.
   \end{align*}
    Then, Young's inequality yields
   \begin{align*}
   &| \nabla (\psi_0 \circ u_\vareps) |\\
   &	\leq 
   \frac{1}{2\vareps^2} |\partial_u \psi_0(u_\vareps) |^2 
   \\&
   + \frac12 \bigg|
   \sqrt{\vareps}\nabla u_\vareps - \frac{1}{\sqrt{\vareps}} \bigg(\tfrac{1}{2}\partial_u \psi_{i,j} (u_\vareps)\otimes \xi_{i,j} - \sumk  \tfrac1{2\sqrt{3}} \partial_u \psi_0 (u_\vareps) \otimes \sqrt{3}\xi_k  \bigg)	
   \bigg|^2\\
   &\quad 	+\frac{1}{2\vareps}   |\partial_u \psi_0(u_\vareps) \cdot 
   \partial_u \psi_{i,j} (u_\vareps)||\xi_{i,j}| \\
   & \quad + \frac{1}{\vareps}  \sumk     \tfrac1{2\sqrt{3}} |\partial_u \psi_0 (u_\vareps) |^2| \sqrt{3}\xi_k |
    \,,
   \end{align*}
   Using Young's inequality and the estimate \eqref{EstimateCoerIntermediate}, we can conclude about the validity of \eqref{eq:coergradpsik}.	
\end{proof}

\begin{proof}[Proof of Proposition \ref{prop:maincoer}]	

\emph{Proof of \eqref{eq:coeren}, \eqref{eq:coernormalpsi} and \eqref{eq:coerdistpsi}:}
	Using \eqref{eq:sum} and adding zero, we obtain
	\begin{align*} 
	E[u_\varepsilon| \xi] 
	&= E[u_\vareps] -  \sumij \int_{\R^d}\frac12 | \nabla (\psi_{i,j}  \circ u_\vareps) | \chit(u_\vareps) \dx \\
	&\quad
	+ \sumkij \int_{\R^d} \frac12 \xi_{k} \cdot 
	\nabla (\psi_{0}\circ u_\varepsilon)  {\chit}(u_\vareps)  \dx \notag \\
	& \quad + \sumij \int_{\R^d} \frac12 \Big( 1 - \xi_{i,j} \cdot\frac{\nabla (\psi_{i,j}  \circ u_\vareps)}{|\nabla (\psi_{i,j}  \circ u_\vareps)|}  \Big) |\nabla (\psi_{i,j}  \circ u_\vareps) |{\chit}(u_\vareps) \dx  \,.
		\end{align*}
	From assumption (A4), we can deduce
	\begin{align*}
	\frac12	| \nabla (\psi_{i,j}  \circ u_\vareps) | {\chit}(u_\vareps) \leq \frac12	| \partial_u \psi_{i,j}(u_\vareps)| | \nabla u_\vareps |  {\chit}(u_\vareps)\leq \sqrt{2W(u_\vareps)} |\nabla u_\vareps|  {\chit}(u_\vareps)\,.
	\end{align*}
	Hence, using the definition of $E[u_\eps]$, we have
	\begin{align*} 
	& E[u_\varepsilon| \xi] + \sumkij  \int_{\R^d} \frac12 \| \xi_{k}\|_{L_x^\infty} | \nabla (\psi_{0}\circ u_\varepsilon)| {\chit}(u_\vareps) \dx\\
	& \geq \sumij \int_{\R^d} \frac12 \bigg(\sqrt{\vareps}|\nabla u_\vareps| - \frac{\sqrt{2W(u_\vareps)}}{\sqrt{\vareps}} \bigg)^2  {\chit}(u_\vareps) \dx  \\
	& \quad +  \sumij \int_{\R^d} \frac12 \Big( 1 - \xi_{i,j} \cdot\frac{\nabla (\psi_{i,j}  \circ u_\vareps)}{|\nabla (\psi_{i,j}  \circ u_\vareps)|}  \Big) |\nabla (\psi_{i,j}  \circ u_\vareps)| {\chit}(u_\vareps) \dx \,.
	\end{align*}
	Then, noting that
	\begin{align*} 
	&\left|\frac{\nabla (\psi_{i,j}  \circ u_\vareps)}{|\nabla (\psi_{i,j}  \circ u_\vareps)|} - \xi_{i,j} \right|^2 |\nabla (\psi_{i,j} \circ u_\vareps)| {\chit}(u_\vareps) \\
	&\leq 2 \left( 1- \xi_{i,j} \cdot \frac{\nabla (\psi_{i,j}  \circ u_\vareps)}{|\nabla (\psi_{i,j}  \circ u_\vareps)|} \right) |\nabla (\psi_{i,j} \circ u_\vareps)|  {\chit}(u_\vareps)
	\end{align*}
	 together with the fact that $\sum^N_{i,j=1: i < j }{\chit}(u_\vareps) = 1$, we see that both \eqref{eq:coeren} and \eqref{eq:coernormalpsi} follow from the preceding two formulas and \eqref{eq:coergradpsik}.
Furthermore, using
\begin{align*}
	\min \{ \dist^2(x, I_{i,j}) ,1\} \leq C (1 - |\xi_{i,j}|) \leq C\bigg (1- \xi_{i,j} \cdot \frac{\nabla (\psi_{i,j}  \circ u_\vareps)}{|\nabla (\psi_{i,j}  \circ u_\vareps)|} \bigg),
\end{align*}
we obtain \eqref{eq:coerdistpsi}.	
	
\emph{Proof of \eqref{eq:coerdist}:}
By exploiting \eqref{eq:sum} and by adding zeros, we obtain
\begin{align} \label{eq:ineqforgradu}
&\left( \frac{\varepsilon}{2} |\nabla u_\vareps |^2  +  \frac{1}{\vareps} W(u_\vareps) \right){\chi}_{\mathcal{T}_{i,j}}(u_\vareps) \notag\\
&\leq \bigg[   \frac{\varepsilon}{2} |\nabla u_\vareps |^2 + \frac{1}{\vareps} W(u_\vareps)+ \sum_{\ell=1}^N \xi_\ell \cdot \nabla (\psi_\ell \circ u_\vareps ) 
\bigg]{\chit}(u_\vareps) \notag \\
& \quad + \bigg[   
 \frac{1}{2}|\nabla (\psi_{i,j} \circ u_\vareps)| 
+ \frac{1}{2}\left| \nabla (\psi_{0}\circ u_\varepsilon)\right|
  \bigg]{\chit}(u_\vareps)\,.
\end{align}
As a consequence, we have
\begin{align*}
&\sumij \int_{\R^d} \min \{ \dist^2(x, I_{i,j}) ,1\}  \left(\frac{\varepsilon}{2} |\nabla u_\vareps |^2  +  \frac{1}{\vareps} W(u_\vareps) \right)   {\chit}(u_\vareps) \dx \\
&\leq E[u_\vareps|\xi]
+\sumij \int_{\R^d} \min \{ \dist^2(x, I_{i,j}) ,1\} |\nabla (\psi_{i,j} \circ u_\vareps)| {\chit}(u_\vareps) \dx
\\
&\quad +   \sumij	 \int_{\R^d}  \left| \nabla (\psi_{0}\circ u_\varepsilon)\right| {\chit}(u_\vareps) \dx \,,
\end{align*}
whence we deduce \eqref{eq:coerdist} from \eqref{eq:coergradpsik} and \eqref{eq:coerdistpsi}.

\emph{Proof of  \eqref{eq:coertangentgradu}:}
Expanding the square and using $1-c\dist^2(\cdot,I_{i,j})\leq |\xi_{i,j}|\leq \max\{1-C\dist^2(\cdot,I_{i,j})),0\}$, we obtain
\begin{align*}
	 \varepsilon | (\Id-\xi_{i,j}\otimes \xi_{i,j}) \nabla u_\vareps^\mathsf{T} |^2 {\chi}_{\mathcal{T}^i_j}(u_\vareps)  
	 \leq &\, \left[  \varepsilon| \nabla u_\vareps |^2-  \vareps | (\xi_{i,j}  \cdot \nabla) u_\vareps |^2  \right] {\chi}_{\mathcal{T}_{i,j}}(u_\vareps) \\
	 &+ C\min\{\dist^2(\cdot, I_{i,j}), 1\}  \varepsilon| \nabla u_\vareps |^2 {\chi}_{\mathcal{T}_{i,j}}(u_\vareps) 	 
	 \,.
\end{align*}
Then, by adding zeros, we obtain
\begin{align*}
&\varepsilon | (\Id-\xi_{i,j}\otimes \xi_{i,j}) \nabla u_\vareps^\mathsf{T} |^2 {\chi}_{\mathcal{T}_{i,j}}(u_\vareps)  \\
&= \bigg[  \varepsilon| \nabla u_\vareps |^2 + \frac{1}{\vareps} 2W(u_\vareps)  - \xi_{i,j} \cdot \nabla (\psi_{i,j} \circ u_\vareps) 
+ \sumk   \xi_{k} \cdot 
\nabla (\psi_{0}\circ u_\varepsilon)  \bigg] {\chit}(u_\vareps) \\
&\quad - \sumk  \xi_{k} \cdot \nabla (\psi_{0}\circ u_\varepsilon) {\chit}(u_\vareps)  
+\left[ \frac{1}{4\vareps}  |\partial_u \psi_{i,j} (u_\vareps)  |^2 - \frac{1}{\vareps} 2W(u_\vareps)   \right] {\chit}(u_\vareps)\\
&\quad - \left[   \frac{1}{4\vareps}  |\partial_u \psi_{i,j} (u_\vareps)  |^2- \xi_{i,j} \cdot \nabla (\psi_{i,j} \circ u_\vareps)+  \vareps | (\xi_{i,j}  \cdot \nabla) u_\vareps |^2  \right] {\chit}(u_\vareps)\\
& \quad + C\min\{\dist^2(\cdot, I_{i,j}), 1\}  \varepsilon| \nabla u_\vareps |^2 {\chit}(u_\vareps) \\
&\leq 2 \left[  \frac{\vareps}{2}| \nabla u_\vareps |^2 + \frac{1}{\vareps} W(u_\vareps)  + \sum_{\ell=1}^N \xi_\ell \cdot \nabla (\psi_{\ell}  \circ u_\vareps)  \right] {\chit}(u_\vareps) \\
&\quad + \sumk \|\xi_{k} \|_{L_x^\infty} |  \nabla (\psi_{0} \circ u_\vareps) | {\chit}(u_\vareps)
\\
&\quad
+\left[ \frac{1}{4\vareps}  |\partial_u \psi_{i,j} (u_\vareps)  |^2 - \frac{1}{\vareps} 2W(u_\vareps)   \right] {\chit}(u_\vareps)\\
&\quad - \left| \frac{1}{2\sqrt{\vareps}}  \partial_u \psi_{i,j} (u_\vareps) -  \sqrt{\vareps} (\xi_{i,j}  \cdot \nabla) u_\vareps  \right|^2 {\chit}(u_\vareps)\\
& \quad + C \min\{\dist^2(\cdot, I_{i,j}), 1\}  \varepsilon| \nabla u_\vareps |^2 {\chit}(u_\vareps) \,,
\end{align*}
due to \eqref{eq:sum}.
Noting that assumption (A4) implies
\begin{align*}
\frac{1}{4} \big|\partial_u \psi_{i,j}  (u_\vareps)\big|^2  {\chit} (u_\vareps) \leq 2 W(u_\vareps){\chit}(u_\vareps)\,,
\end{align*}
the validity of \eqref{eq:coertangentgradu} follows from \eqref{eq:coergradpsik} and \eqref{eq:coerdist}.
\end{proof}

We next prove the additional coercivity properties stated in Lemma~\ref{lemma:additionalcoer}. 

\begin{proof}[Proof of Lemma \ref{lemma:additionalcoer}]

\emph{Proof of \eqref{eq:coernormalu}:}
Note that \eqref{eq:ineqforgradu} yields
\begin{align*}
& \sumij  \int_{\R^d}	\left|\frac{\nabla (\psi_{i,j}  \circ u_\vareps)}{|\nabla (\psi_{i,j}  \circ u_\vareps)|} - \xi_{i,j} \right|^2  \varepsilon |\nabla u_\vareps |^2 {\chit}(u_\vareps) \dx \\
&\leq  8 E[u_\vareps|\xi]
+  \sumij 	 \int_{\R^d} \left|\frac{\nabla (\psi_{i,j}  \circ u_\vareps)}{|\nabla (\psi_{i,j}  \circ u_\vareps)|} - \xi_{i,j} \right|^2 |\nabla (\psi_{i,j} \circ u_\vareps)| {\chit}(u_\vareps) \dx \\
&\quad + \sumij	4 \int_{\R^d} | \nabla (\psi_{0} \circ u_\vareps)| {\chit}(u_\vareps) \dx \,.
\end{align*}
Hence, using \eqref{eq:coergradpsik} and \eqref{eq:coernormalpsi}, we obtain \eqref{eq:coernormalu}.

\emph{Proof of \eqref{eq:coersurfmeaslenght} and  \eqref{eq:coersurfmeas}:}
First, we compute
	\begin{align*}
&\left|  \frac{1}{2\sqrt{\vareps}}   \frac{|\nabla (\psi_{i,j}  \circ u_\vareps)|}{|\nabla u_\vareps|} - \sqrt{\vareps} |\nabla u_\eps | \right|^2 {\chit}(u_\vareps) \\
&= \left[ \frac{1}{\vareps} \frac{|\nabla (\psi_{i,j}  \circ u_\vareps)|^2}{4|\nabla u_\vareps|^2}  + \vareps |\nabla u_\eps |^2   - |\nabla (\psi_{i,j}  \circ u_\vareps)| \right] {\chit}(u_\vareps) 
\end{align*}
and
	\begin{align*} 
	&\left|  \frac{1}{2\sqrt{\vareps}}  \partial_u \psi_{i,j} (u_\vareps)  \otimes  \frac{\nabla (\psi_{i,j}  \circ u_\vareps)}{|\nabla (\psi_{i,j}  \circ u_\vareps)|} - \sqrt{\vareps} \nabla u_\eps \right|^2   {\chit}(u_\vareps) \dx 	\\
	&=\left[   \frac{1}{4\vareps}  |\partial_u \psi_{i,j} (u_\vareps)  |^2 + \vareps |\nabla u_\eps |^2   - |\nabla (\psi_{i,j}  \circ u_\vareps)|\right] {\chit}(u_\vareps) \,.
	\end{align*}		
	Then, from assumption (A4) we deduce
	\begin{align*}
	\frac{|\nabla (\psi_{i,j}  \circ u_\vareps)|^2}{|\nabla u_\vareps|^2} {\chit}(u_\vareps) \leq |\partial_u \psi_{i,j}  (u_\vareps)|^2  {\chit}(u_\vareps)\leq 8 W(u_\vareps) {\chit}(u_\vareps)\,.
	\end{align*}
	Finally, by exploiting \eqref{eq:sum} and by adding zero, one can conclude about the validity of both \eqref{eq:coersurfmeas} and \eqref{eq:coersurfmeaslenght}, due to \eqref{eq:coerpartialpsik}.

\emph{Proof of \eqref{eq:coerxigradu}:}
Since we have
\begin{align*}
&	\left|  \xi_{i,j} \otimes \xi_{i,j} 
- \frac{ \nabla u_\vareps^\mathsf{T} \nabla u_\vareps }{|\nabla u_\vareps|^2}
\right|^2 \vareps |\nabla u_\vareps|^2 {\chit}(u_\vareps) \\
&	\leq \left[2- 2\frac{\xi_{i,j} \cdot  \nabla u_\vareps^\mathsf{T}  \nabla u_\vareps \cdot \xi_{i,j}}{ |\nabla u_\vareps|^2}  \right] \vareps |\nabla u_\vareps|^2 {\chit}(u_\vareps) \\
&	\leq 2 \left[  \varepsilon| \nabla u_\vareps |^2-  \vareps | (\xi_{i,j}  \cdot \nabla) u_\vareps |^2  \right] {\chit}(u_\vareps) \\
&	\leq 2   \vareps | (\Id-\xi_{i,j}\otimes \xi_{i,j}) \cdot \nabla u_\vareps^\mathsf{T} |^2  {\chi}_{\mathcal{T}_{i,j}}(u_\vareps) + C \min\{\dist^2(\cdot, I_{i,j}), 1\} \varepsilon| \nabla u_\vareps |^2 {\chit}(u_\vareps)
\end{align*}
(where in the last step we have used the estimate $1-C\dist^2(\cdot,I_{i,j}) \leq |\xi_{i,j}|\leq \max\{1-c\dist^2(\cdot,I_{i,j})),0\}$), the bound \eqref{eq:coerxigradu} follows from \eqref{eq:coertangentgradu} and \eqref{eq:coerdist}.

\end{proof}

\subsection{Convergence of the phase indicator functions}

We now show how to obtain the error estimate at the level of the indicator functions.
\begin{proof}[Proof of Proposition~\ref{PropBulkConvergence}]
Using \eqref{eq:AC} and the fact that $\supp \partial_t \bar \chi_i \subset \partial \supp \bar \chi_i$ as well as $\vartheta_i=0$ on $\partial \supp \bar \chi_i$, we compute
\begin{align}
\ddt& \int_{\R^d} (\psi_i(u_\vareps ) - \bar{\chi}_i) \vartheta_i \dx \notag \\
= &  \int_{\R^d} \frac1\vareps \partial_u \psi_i(u_\vareps ) \cdot \left( \vareps \Delta u_\vareps - \frac1\vareps \partial_u W(u_\vareps)\right) \vartheta_i \dx 
+ \int_{\R^d}(\psi_i(u_\vareps ) - \bar{\chi}_i) \partial_t \vartheta_i \dx \notag\\
= &  \int_{\R^d} \frac1\vareps \partial_u \psi_i(u_\vareps ) \cdot \left( \vareps \Delta u_\vareps - \frac1\vareps \partial_u W(u_\vareps)\right) \vartheta_i \dx 
+ \int_{\R^d} B \cdot \nabla (\psi_{i} \circ u_\vareps) \vartheta_i \dx \notag\\
& + \int_{\R^d}(\nabla \cdot B)(\psi_i(u_\vareps ) - \bar{\chi}_i) \vartheta_i  \dx 
+ \int_{\R^d}(\psi_i(u_\vareps ) - \bar{\chi}_i)  \left(\partial_t \vartheta_i + B \cdot \nabla \vartheta_i \right) \dx\,, \label{eq:netest1}
\end{align}
where we added a zero and then integrated by parts. Note that we used the fact that $\vartheta_i =0 $ on $\partial \{\bar{\chi}_i =1\}$.

By \eqref{eq:timeevtheta}, the last two terms on the right hand side of \eqref{eq:netest1} can be bounded by
\begin{align*}
\left(\| \nabla \cdot B\|_{L_x^\infty} + C\right)\int_{\R^d}|\psi_i(u_\vareps ) - \bar{\chi}_i| |\vartheta_i | \dx .
\end{align*}
As for the second term on the right hand side of \eqref{eq:netest1}, we perform the folllowing decomposition
\begin{align*}
\int_{\R^d} B \cdot \nabla (\psi_{i} \circ u_\vareps) \vartheta_i \dx 
=& \;  \sum^N_{\substack{j,k=1\\ j < k } } \int_{\R^d} (B \cdot \xi_{j,k}) \xi_{j,k} \cdot \nabla (\psi_{i} \circ u_\vareps) \vartheta_i \chi_{\mathcal{T}_{j,k}^{{\color{white} b }}}(u_\vareps) \dx \\
&+ \sum^N_{\substack{j,k=1\\ j < k } } \int_{\R^d} [(\Id- \xi_{j,k}  \otimes \xi_{j,k} )B)]
\cdot \nabla (\psi_{i} \circ u_\vareps) \vartheta_i \chi_{\mathcal{T}_{j,k}^{{\color{white} b }}} (u_\vareps)  \dx \,,
\end{align*}
whence, by adding a zero and using $\sum^N_{ j,k=1 : j  \neq k } \chi_{\mathcal{T}_{j,k}^{{\color{white} b }}}  = 1 $, 
\begin{align*}
\int_{\R^d} &B \cdot \nabla (\psi_{i} \circ u_\vareps) \vartheta_i \dx \\
=& \;   \int_{\R^d} \frac1\vareps \h_\vareps \cdot \frac{\nabla u_\vareps ^\mathsf{T}}{|\nabla u_\vareps|}\cdot \partial_u \psi_{i} ( u_\vareps)  \dx  \\
& + \sum^N_{\substack{j,k=1\\ j < k } }  \int_{\R^d} \frac1\vareps  \left[\vareps (B \cdot \xi_{j,k}) \xi_{j,k} |\nabla u_\vareps| - \h_\vareps \right]\cdot \frac{\nabla u_\vareps ^\mathsf{T}}{|\nabla u_\vareps|}\cdot \partial_u \psi_{i} ( u_\vareps) \vartheta_i \chi_{\mathcal{T}_{j,k}^{{\color{white} b }}} (u_\vareps) \dx \\
&+ \sum^N_{\substack{j,k=1\\ j < k } }  \int_{\R^d} [(\Id- \xi_{j,k}  \otimes \xi_{j,k} )B)]
\cdot \nabla (\psi_{i} \circ u_\vareps) \vartheta_i \chi_{\mathcal{T}_{j,k}^{{\color{white} b }}} (u_\vareps) \dx \,.
\end{align*}
Note that the last two terms are nonzero only if $j=i$ or $k=i$.
Hence, using Young's inequality, the second term can be estimated by 
\begin{align*}
 &  \frac12 \sum^N_{\substack{ k=1 \\ k \neq i }}  \int_{\R^d} \frac1\vareps  \left|\vareps (B \cdot \xi_{i,k}) \xi_{i,k} |\nabla u_\vareps| - \h_\vareps \right|^2 \chi_{\mathcal{T}_{i,k}^{{\color{white} b }}} (u_\vareps) \dx\\
&+ \frac12 \sum^N_{\substack{ k=1 \\ k \neq i }}  \int_{\R^d} \frac1\vareps  | \partial_u \psi_{i} ( u_\vareps)|^2| \vartheta_i |^2 \chi_{\mathcal{T}_{i,k}^{{\color{white} b }}}(u_\vareps) \dx\,.
\end{align*}
As for the third one, we obtain using Young's inequality and exploiting the coercivity property \eqref{eq:coertangentgradu}
\begin{align*}
&  \sum^N_{\substack{ k=1 \\ k \neq i }} \int_{\R^d} [(\Id- \xi_{i,k}  \otimes \xi_{i,k} )B)]
\cdot \nabla (\psi_{i} \circ u_\vareps) \vartheta_i  \chi_{\mathcal{T}_{i,k}^{{\color{white} b }}} (u_\vareps) \dx
\\
&\leq
\sum^N_{\substack{ k=1 \\ k \neq i }} \int_{\R^d} \eps |(\Id- \xi_{i,k}  \otimes \xi_{i,k} )B)] \nabla u_\eps^\mathsf{T} |^2 \chi_{\mathcal{T}_{i,k}^{{\color{white} b }}} (u_\vareps) \dx
\\&~~~~
+\sum^N_{\substack{ k=1 \\ k \neq i }} \int_{\R^d} \frac{1}{\eps} |\partial_u \psi_i|^2 |\vartheta_i|^2  \chi_{\mathcal{T}_{i,k}^{{\color{white} b }}} (u_\vareps) \dx
\\&
\leq C E[u_\eps|\xi] + \sum^N_{\substack{ k=1 \\ k \neq i }} \int_{\R^d} \frac{1}{\eps} |\partial_u \psi_i|^2 |\vartheta_i|^2  \chi_{\mathcal{T}_{i,k}^{{\color{white} b }}} (u_\vareps) \dx.
\end{align*}
In summary, we have shown
\begin{align*}
&\ddt \int_{\R^d} (\psi_i(u_\vareps ) - \bar{\chi}_i) \vartheta_i \dx
\\&
\leq C \int_{\R^d}|\psi_i(u_\vareps ) - \bar{\chi}_i| |\vartheta_i | \dx + \sum^N_{\substack{ k=1 \\ k \neq i }} \int_{\R^d} \frac{1}{\eps} |\partial_u \psi_i(u_\vareps )|^2 |\vartheta_i|^2  \chi_{\mathcal{T}_{i,k}^{{\color{white} b }}} (u_\vareps) \dx 
\\&~~~
+\frac12 \sum^N_{\substack{ k=1 \\ k \neq i }}  \int_{\R^d} \frac1\vareps  \left|\vareps (B \cdot \xi_{i,k}) \xi_{i,k} |\nabla u_\vareps| - \h_\vareps \right|^2 \chi_{\mathcal{T}_{i,k}^{{\color{white} b }}} (u_\vareps) \dx
\\&~~~
+\int_{\R^d} \frac1\vareps \partial_u \psi_i(u_\vareps ) \cdot \left( \vareps \Delta u_\vareps - \frac1\vareps \partial_u W(u_\vareps)\right) \vartheta_i \dx
+\int_{\R^d} \frac1\vareps \h_\vareps \cdot \frac{\nabla u_\vareps ^\mathsf{T}}{|\nabla u_\vareps|}\cdot \partial_u \psi_{i} ( u_\vareps)  \dx.
\end{align*}
We estimate the two terms in the last line
\begin{align*}
&\int_{\R^d} \frac1\vareps \partial_u \psi_i(u_\vareps ) \cdot \left( \vareps \Delta u_\vareps - \frac1\vareps \partial_u W(u_\vareps)\right) \vartheta_i \dx + \int_{\R^d} \frac1\vareps \h_\vareps \cdot \frac{\nabla u_\vareps ^\mathsf{T}}{|\nabla u_\vareps|}\cdot \partial_u \psi_{i} ( u_\vareps) \vartheta_i \dx  \\
&= \int_{\R^d} \frac1\vareps \bigg[\left( \vareps \Delta u_\vareps - \frac1\vareps \partial_u W(u_\vareps)\right) +  \h_\vareps \cdot \frac{\nabla u_\vareps ^\mathsf{T}}{|\nabla u_\vareps|} \bigg] \cdot \partial_u \psi_{i} ( u_\vareps) \vartheta_i \dx\\
&\leq \frac12 \int_{\R^d} \frac1\vareps \bigg[\left( \vareps \Delta u_\vareps - \frac1\vareps \partial_u W(u_\vareps)\right) +  \h_\vareps \cdot \frac{\nabla u_\vareps ^\mathsf{T}}{|\nabla u_\vareps|} \bigg]^2\dx
+ \frac12 \int_{\R^d} \frac1\vareps | \partial_u \psi_{i} ( u_\vareps)|^2| \vartheta_i|^2 \dx		\\
& \leq  \int_{\R^d} \frac1{2\vareps} \left(\bigg|\vareps \Delta u_\vareps - \frac1\vareps \partial_u W(u_\vareps) \bigg|^2 - |\h_\vareps|^2\right) \dx 
-  \int_{\R^d} \frac1{2\vareps} \biggl[ \Id - \frac{\nabla u_\vareps ^\mathsf{T} \nabla u_\vareps }{|\nabla u_\vareps|^2}   \biggr]: \h_\vareps \otimes \h_\vareps \dx \\
& \quad + \frac12 \int_{\R^d} \frac1\vareps | \partial_u \psi_{i} ( u_\vareps)|^2| \vartheta_i|^2 \dx\\
& \leq  \int_{\R^d} \frac1{2\vareps} \left(\bigg|\vareps \Delta u_\vareps - \frac1\vareps \partial_u W(u_\vareps) \bigg|^2 - |\h_\vareps|^2\right) \dx + \frac12 \int_{\R^d} \frac1\vareps | \partial_u \psi_{i} ( u_\vareps)|^2| \vartheta_i|^2 \dx\,,
\end{align*}
where we used the fact that $\big[ \Id - \frac{\nabla u_\vareps ^\mathsf{T} \nabla u_\vareps }{|\nabla u_\vareps|^2}   \big]$ is a positive semidefinite matrix.
Since $|\partial_u \psi_{i}(u_\vareps) |\leq C \sqrt{2W(u_\vareps)}$ and $| \vartheta_i | \leq \min\{\dist^2(\cdot, \partial \supp \bar \chi_i), 1\} \leq C\min\{\dist(\cdot, I_{i,k}),1\}$ (see \eqref{DistanceWeightConditions}), from \eqref{eq:coerdist} it follows that
\begin{align*}
& \int_{\R^d} \frac1\vareps | \partial_u \psi_{i} ( u_\vareps)|^2| \vartheta_i|^2 \dx =  \sum^N_{\substack{ k=1 \\ k \neq i }}  \int_{\R^d}  \frac1\vareps |\partial_u \psi_{i}(u_\vareps) |^2| \vartheta_i |^2 \chi_{\mathcal{T}_{i,k}^{{\color{white} b }}}   (u_\vareps) \dx \\
&\leq 	C \sum^N_{\substack{ k=1 \\ k \neq i }}     \int_{\R^d}  \frac1\vareps 2W(u_\vareps) | \vartheta_i |^2 \chi_{\mathcal{T}_{i,k}^{{\color{white} b }}} (u_\vareps) \dx	\\
&\leq C E[u_\vareps| \xi ] \,.
\end{align*}
Summarizing the previous estimates, we get
\begin{align*}
\ddt& \int_{\R^d} (\psi_i(u_\vareps ) - \bar{\chi}_i) \vartheta_i \dx
\\&
\leq
C E[u_\vareps| \xi ]
+C \int_{\R^d}|\psi_i(u_\vareps ) - \bar{\chi}_i| |\vartheta_i | \dx
\\&~~~
+\frac12 \sum^N_{\substack{ k=1 \\ k \neq i }}  \int_{\R^d} \frac1\vareps  \left|\vareps (B \cdot \xi_{i,k}) \xi_{i,k} |\nabla u_\vareps| - \h_\vareps \right|^2 \chi_{\mathcal{T}_{i,k}^{{\color{white} b }}} (u_\vareps) \dx
\\&~~~~
+\int_{\R^d} \frac1{2\vareps} \left(\bigg|\vareps \Delta u_\vareps - \frac1\vareps \partial_u W(u_\vareps) \bigg|^2 - |\h_\vareps|^2\right) \dx.
\end{align*}
An application of the Gronwall inequality to Theorem \ref{TheoremRelativeEnergyInequality} yields
\begin{align*}
&\sup_{t \in [0,T]}   E[u_\vareps|\xi](t) + 
 \sum_{i=1}^N \sum^N_{\substack{ k=1 \\ k \neq i }} \int_{\R^d} \frac1\vareps  \left|\vareps (B \cdot \xi_{i,k}) \xi_{i,k} |\nabla u_\vareps| - \h_\vareps \right|^2 \chi_{\mathcal{T}_{i,k}^{{\color{white} b }}}  (u_\vareps) \dx \\
& + \int_{\R^d} \frac1{2\vareps} \bigg(\bigg|\vareps \Delta u_\vareps - \frac1\vareps \partial_u W(u_\vareps) \bigg|^2 - |\h_\vareps|^2\bigg) \dx
\leq C(d,T, (\bar{\chi}(t))_{t \in [0,T]}) E[u_\eps|\xi](0).
\end{align*}
Integrating the previous formula in time and inserting this estimate, we deduce by the condition \eqref{DistanceWeightConditions} on the weight $\vartheta_i$
\begin{align*}
&\int_{\R^d} (\psi_i(u_\vareps(\cdot,T)) - \bar{\chi}_i(\cdot,T)) \vartheta_i(\cdot,T) \dx
\\&
\leq C(d,T, (\bar{\chi}(t))_{t \in [0,T]}) E[u_\eps|\xi](0) + \int_0^T \int_{\R^d} (\psi_i(u_\vareps) - \bar{\chi}_i) \vartheta_i \dx \,dt.
\end{align*}
The Gronwall inequality now implies our result, using again \eqref{DistanceWeightConditions}.
\end{proof}

\subsection{Proof of the main theorem}

Our main theorem (Theorem~\ref{MainTheorem}) is a simple consequence of Theorem~\ref{TheoremRelativeEnergyInequality} and Proposition~\ref{PropBulkConvergence}.
\begin{proof}
Combining Theorem~\ref{TheoremRelativeEnergyInequality} and Proposition~\ref{PropBulkConvergence}, we obtain the desired bounds
\begin{align*}
\sup_{t\in [0,T]} E_\eps[u_\eps|\xi](t) &\leq C \eps,
\\
\sup_{t\in [0,T]} 
\max_{i\in \{1,\ldots,N\}} \int_{\Rd} |\psi_i(u_\eps(\cdot,t))-\bar \chi_i(\cdot,t)| \dist(x,\partial \supp \bar\chi_i(\cdot,t)) \,dx &\leq C \eps.
\end{align*}
\end{proof}

\section{Construction of well-prepared initial data}
In this section we construct an initial datum $u_\vareps(\cdot,0)$ complying with the following relative energy estimate:
\begin{align} \label{eq:initialestimate}
E[u_\vareps|\xi](0) \leq C\vareps \,,
\end{align}
where the constant $C>0$ depends on the initial data $(\bar \chi_1(\cdot,0),\ldots,\bar \chi_N(\cdot,0))$ and the potential $W$ satisfying assumptions (A1)--(A4) (see Sec. \ref{sec:mainresults}).
In particular, we provide an explicit construction of $u_\vareps(\cdot,0)$ for a network of interfaces meeting at two-dimensional triple junctions ($d=2$) satisfying the 120 degree angle condition. To this aim, we adopt a geometric setting for the initial network which was already introduced in \cite[Sec.~5-6]{FischerHenselLauxSimon} in the general time-dependent case. 
A similar construction can be provided for three-dimensional double
bubbles ($d=3$) satisfying the correct angle condition along the triple line, this time by exploiting the corresponding geometric setting given by \cite[Sec.~3-4]{HenselLauxMultiD}.

Note that from our construction and the fast decay of the Modica-Mortola profiles towards the pure phases $\alpha_i$, $1\leq i\leq N$, it will also be apparent that our initial data $u_\eps(\cdot,0)$ also satisfies the estimate
\begin{align*}
\max_{i\in \{1,\ldots,N\}} \int_{\Rd} |\psi_i(u_\eps(\cdot,0))-\bar \chi_i(\cdot,0)| \dist(x,\partial \supp \bar\chi_i(\cdot,0)) \,dx &\leq C \eps.
\end{align*}
In fact, for this lower-order quantity one may even show the stronger bound $O(\eps^2)$. In summary, the considerations in the present section will establish Proposition~\ref{PropositionInitialData}.

\subsection{Rescaled  one-dimensional equilibrium profiles}
For any distinct $i,j \in \{1,...,N\}$, let $\gamma_{i , j }: [-1,1] \rightarrow \R^2$ be the unique constant-speed $C^1$ path connecting $\alpha_i$ to $\alpha_j$ such that $\gamma_{i,j}(-1)= \alpha_i$,  $\gamma_{i,j}(1)= \alpha_j$, and
\begin{align*}
\int_{-1}^{1} \sqrt{2W(\gamma_{i , j }(r))} |\gamma_{i , j }^\prime(r)| \mathrm{d} r =1 \,.
\end{align*}
Let $\tilde{\theta}_{i,j} : \R \rightarrow [-1,+1]$ be the unique solution of the ODE 
\begin{align*}
\tilde{\theta}_{i,j}^\prime(s)= |\gamma_{i , j }^\prime(\tilde{\theta}_{i,j}(s))|^{-1}\sqrt{2W(\gamma_{i , j }(\tilde{\theta}_{i,j}(s))}
\end{align*}
with boundary conditions $\tilde{\theta}_{i,j}(\pm \infty) = \pm 1$.
Due to the growth properties of $W$ in the neighborhoods of $\alpha_{i}$ and $\alpha_{j}$ (see condition (A1) in Sec. \ref{sec:mainresults}), the profile $\tilde{\theta}_{i,j}$ approaches its boundary values $\pm 1$ at $\pm \infty$ with a power law of order $\frac{2}{q-2}$ for $q > 2$ and an exponential rate for $q=2$ \cite{Sternberg}.

Let $s^0_{i,j} \in \R$ such that $\tilde{\theta}_{i,j}(s_{i,j}^0)=0$.
Let $\rho> 0$ be such that $\tilde{\theta}_{i,j}(\rho+s^0_{i,j}) =\bar{\theta}_{i,j}^+ $ and $\tilde{\theta}_{i,j}(-\rho+s^0_{i,j})= - \bar{\theta}_{i,j}^-$ for $\bar{\theta}_{i,j}^\pm \in (0,1)$.
We define the rescaled one-dimensional equilibrium profiles $\theta_{i,j} : \R \rightarrow \gamma_{i,j}$ as
\begin{align} \label{eq:equilibriumprofile}
\theta_{i,j}(s) :=
 \begin{cases}
\gamma_{i , j } \Big( \frac1{\bar{\theta}_{i,j}^+}{ \tilde{\theta}_{i,j}( s^0_{i,j}  \vee  (s+s^0_{i,j} ) \wedge (\rho + s^0_{i,j}) ) } \Big) \quad \text{for } s \in [0,\infty)\,,\\
\gamma_{i , j } \Big( \frac1{\bar{\theta}_{i,j}^-} { \tilde{\theta}_{i,j}( (- \rho + s^0_{i,j} )\vee  (s+s^0_{i,j} ) \wedge s^0_{i,j}  ) } \Big)
\quad \text{for } s \in (-\infty,0) \,,
\end{cases}
\end{align}
so that $	\theta_{i,j}(s) = \alpha_i$ for any $s \leq \rho$ and $	\theta_{i,j} (s) = \alpha_j$  for any $s \geq \rho$. Furthermore, we have
\begin{align} 
&(\theta_{i,j})^\prime(s) \\
&= \begin{cases}
\frac1{\bar{\theta}_{i,j}^+}\sqrt{2W(\theta_{i,j}(s))} \frac{\gamma_{i , j }^\prime}{|\gamma_{i , j }^\prime|} 
\Big( \frac1{\bar{\theta}_{i,j}^+} { \tilde{\theta}_{i,j}( s^0_{i,j}  \vee  (s+s^0_{i,j} ) \wedge (\rho + s^0_{i,j}) ) } \Big)   \;\; \text{for } s \in [0,  \rho),\\
\frac1{\bar{\theta}_{i,j}^-}\sqrt{2W(\theta_{i,j}(s))} \frac{\gamma_{i , j }^\prime}{|\gamma_{i , j }^\prime|} \Big( \frac1{\bar{\theta}_{i,j}^-}{ \tilde{\theta}_{i,j}( (- \rho  + s^0_{i,j}  )\vee  (s+s^0_{i,j} ) \wedge s^0_{i,j}  ) } \Big)\; \;   \text{for } s \in (-\rho,  0),\\
0 \;\; \text{for } s \in (- \infty, -\rho] \cup [ \rho, + \infty) .
\end{cases}
\end{align}
Note that, if $W$ satisfies additional symmetry properties along the path $\gamma_{i , j }$, then $\tilde{\theta}_{i,j}$ is odd, thus $s^0_{i,j}=0$ and $\bar{\theta}_{i,j}^-=\bar{\theta}_{i,j}^+$.
Moreover, if $W$ satisfies additional symmetry properties with respect to all the paths $\gamma_{i , j }$, then all $\bar{\theta}_{i,j}^\pm$ coincide.

\subsection{Geometry of the initial network} \label{subsec:geometry}
For simplicity of notation, we shall omit the evaluation at initial time throughout this chapter, i.\,e.\ we write $\bar{\chi}$ instead of $\bar{\chi}(\cdot,0)$.
Let $\bar{\chi} = (\bar{\chi}_1,..., \bar{\chi}_N)$ be an initial partition of $\R^2$ with interfaces $\partial\{\bar{\chi}_i = 1\} \cap \partial\{\bar{\chi}_j = 1\}=: I_{i,j}$ for distinct $ i,j \in \{1,..., N\}$. 
We decompose the network of interfaces according to its
topological features, i.e., into smooth two-phase interfaces and
triple junctions. 
Suppose that the network has $P$ of
such topological features $\mathcal{T}_n$, $n \in \{1,..., P\}$. We then split $\{1,..., P\} =: \mathcal{C} \cup \mathcal{P}$,
where $\mathcal{C}$ enumerates the connected components
of the two-phase interfaces and $\mathcal{P}$ enumerates the triple junctions. 
In particular, if  $p \in \mathcal{P}$, $\mathcal{T}_p$ is a triple junction, whereas if $c \in \mathcal{C}$,
$\mathcal{T}_c$ is a connected component of a two-phase
interface $I_{i,j}$ for some distinct $i,j \in \{1,..., N\}$.

In the following we use a suitable notion of neighborhood for a single connected component of the network of interfaces provided by \cite[Definition 21]{FischerHenselLauxSimon}. In particular, we adopt the notion of localization radius, which allows to define the diffeomorphism corresponding to a single connected component of a network as it follows.
Let $r_{i,j}$ be a localization radius for the interface $I_{i,j}$ and let $\vec n_{i,j}$ be the normal vector field to $I_{i,j}$ pointing towards $\{\bar{\chi}_j = 1\}$ for some distinct $ i,j \in \{1,..., N\}$.
Then, the map $\Psi_{i,j}: I_{i,j} \times (-r_{i,j}, r_{i,j}) \rightarrow \R^2$, $(x,s) \mapsto x + s \vec n_{i,j}(x)$ defines a diffeomorphism, whose inverse can be splitted as follows
$\Psi^{-1}_{i,j}: \text{im}(\Psi_{i,j}) \mapsto  I_{i,j}  \times (-r_{i,j}, r_{i,j}) $, 
$x \mapsto (P_{{I}_{i,j}} x, \dist^\pm(x,I_{i,j }))$, where
$P_{ {I}_{i,j}}: \operatorname{im}\left(\Psi_{i, j}\right)  \rightarrow {I}_{i,j}$ represent the projection onto the nearest point on the interface ${I}_{i,j}$, whereas $\dist^\pm(\cdot,I_{i,j }): \operatorname{im}\left(\Psi_{i, j}\right)  \rightarrow (-r_{i,j}, r_{i,j})$ a signed distance function.

Similarly as in \cite[Definition 24]{FischerHenselLauxSimon}, we provide a notion of admissible localization radius for a triple junction.
\begin{definition}
	Let $d = 2$. Let $\bar{\chi} = (\bar{\chi}_1,..., \bar{\chi}_N)$ be an initial partition of $\R^d$ with interfaces $\partial\{\bar{\chi}_i = 1\}$, $i = 1,...,N$. Let $\mathcal{T}$ be a triple junction present in the network of interfaces of $\bar{\chi}$, which we assume for simplicity to be formed by the phases 1, 2 and 3.
	For each $i \in \{1, 2, 3\}$, denote by $\mathcal{T}_{i, i+1}$ the connected component of $I_{i,i+1}$ with an endpoint at the triple junction $\mathcal{T}$  and let $r_{i, i+1} \in (0, 1]$ be an admissible localization radius for the interface ${I}_{i, i+1}$ in the sense of \cite[Definition 21]{FischerHenselLauxSimon}.
	We call a scale $r=r_\mathcal{T} \in (0, r_{1,2} \wedge r_{2,3} \wedge r_{3,1}]$ an admissible localization radius
	for the triple junction $\mathcal{T}$ if there exists a wedge decomposition of the
	neighborhood $B_r(\mathcal{T})$ of the triple junction in the following sense:
	
	For each $i \in \{1, 2, 3\}$ there exist sets $W_{i, i+1}$ and $W_i$ with the following properties:
	
	First, the sets $W_{i, i+1}$ and $W_i$ are non-empty subsets of $B_r(\mathcal{T})$ with pairwise disjoint interior such that
	\begin{align*}
	\bigcup_{i \in\{1,2,3\}} \overline{W_{i, i+1}} \cup \overline{W_{i}}=\overline{B_{r}(\mathcal{T})} \,.
	\end{align*}
	
	Second, each of these sets is represented by a cone with apex at the triple junction $\mathcal{T}$
	intersected with $B_r(\mathcal{T})$. More precisely, there exist six pairwise distinct unit-length vectors $\left(X_{i, i+1}^{i}, X_{i, i+1}^{i+1}\right)_{i \in\{1,2,3\}}$ such that for all $i \in\{1,2,3\}$ we have 
	\begin{align*}
W_{i, i+1}&=\left(\mathcal{T}+\left\{a X_{i, i+1}^{i}+ b  X_{i, i+1}^{i+1}: a, b \in(0, \infty)\right\}\right) \cap B_{r}(\mathcal{T}) \\ W_{i} &=\left(\mathcal{T}+\left\{a  X_{i, i+1}^{i}+ b  X_{i-1, i}^{i}: a,b  \in(0, \infty)\right\}\right) \cap B_{r}(\mathcal{T}) \,.
	\end{align*}
	The opening angles of these cones are numerically fixed by
     \begin{align*}
     	X_{i, i+1}^{i} \cdot X_{i, i+1}^{i+1} =  \cos(\tfrac{\pi}{2})=0  \,, \quad X_{i, i+1}^{i}\cdot  X_{i-1, i}^{i} = \cos(\tfrac{\pi}{6})\,.
     \end{align*}
     
     Third, we require that for all $i \in\{1,2,3\}$ it holds
     \begin{align*}
     	B_{r}(\mathcal{T}) \cap \mathcal{T}_{i, i+1} & \subset W_{i, i+1} \cup \mathcal{T} \subset \mathbb{H}_{i, i+1} \\ 
     	W_{i} & \subset \mathbb{H}_{i, i+1} \cap \mathbb{H}_{i, i-1} 
     \end{align*}
     with the domains $\mathbb{H}_{i, i+1}:= \left\{x \in \mathbb{R}^{2}:x \in \operatorname{im}\left(\Psi_{i, i+1}\right)\right\} \cap B_{r}(\mathcal{T})$, where $\Psi_{i, i+1}$ is the diffeomorphism defining the neighboorhood of ${I}_{i, i+1}$ in the
     sense of \cite[Definition 21]{FischerHenselLauxSimon}.
\end{definition}

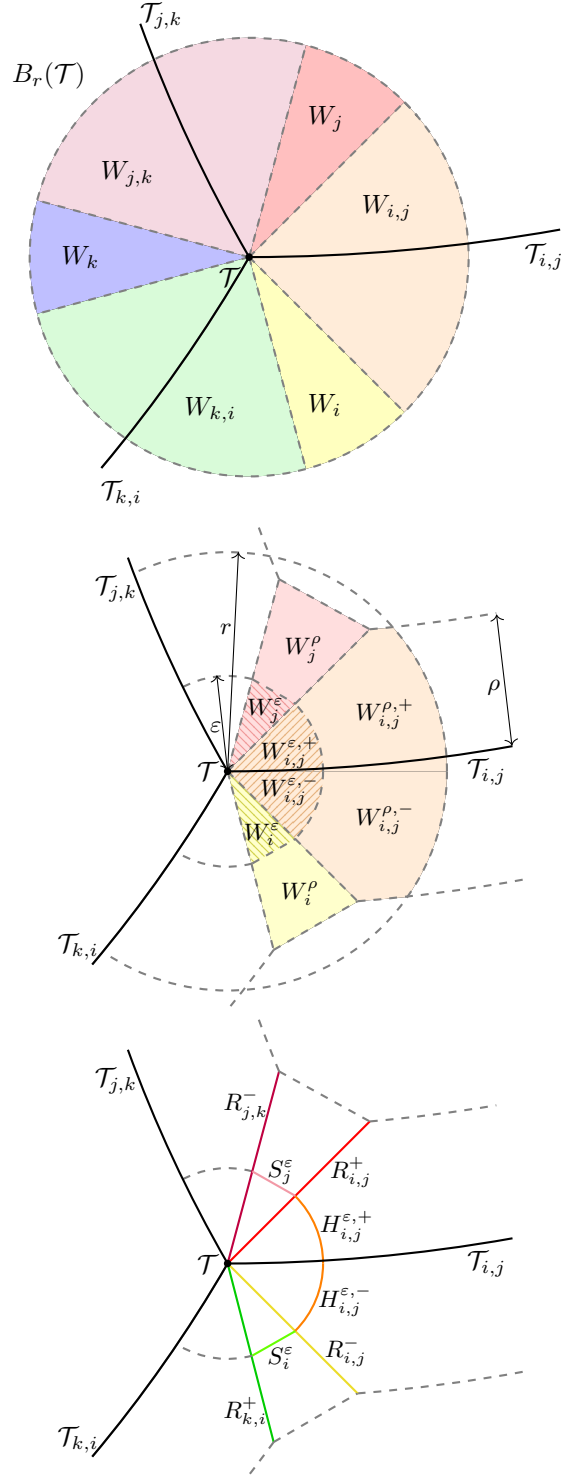
\begin{figure}
	\begin{center}
		\begin{tikzpicture}[scale=0.655]
		\fill[scale=6,inner color = orange!15!, outer color = orange!15!](0,0)--
		(0.525,0.525) arc(45:-45:0.7425);;
		\fill[scale=6,inner color = red!25!, outer color = red!25!](0,0)--
		(0.525,0.525) arc(45:75:0.7425);;
		\fill[scale=6,inner color = yellow!25!, outer color = yellow!25!](0,0)--
		(0.525,-0.525) arc(-45:-75:0.7425);;
		\fill[scale=6,inner color = blue!25!, outer color = blue!25!](0,0)--
		(-0.7172,0.19217) arc(165:195:0.7425);;
		\fill[scale=6,inner color = purple!15!, outer color = purple!15!](0,0)--
		(-0.7172,0.19217) arc(165:75:0.7425);;
		\fill[scale=6,inner color = black!10!green!15!, outer color = black!10!green!15!](0,0)--
		(-0.7172,-0.19217) arc(195:285:0.7425);;
		\draw[scale=6,thick,dashed,gray]  (0.525,0.525) arc(45:-320:0.7425);;
		\draw[scale=6,thick,gray,dashed] (0,0) -- (0.525,0.525);;
		\draw[scale=6,thick,gray,dashed] (0,0) -- (-0.7172,-0.19217);;
		\draw[scale=6,thick,gray,dashed] (0,0) -- (-0.7172,0.19217);;
		\draw[scale=6,thick,dashed,gray] (0,0) -- (0.19,0.71);;
		\draw[scale=6,thick,dashed,gray] (0,0) -- (0.525,-0.525);;	
		\draw[scale=6,thick,dashed,gray] (0,0) -- (0.19,-0.71);;		
		\draw[scale=6,thick,black]  (0,0) arc(270:280.1:6);
		\draw[scale=6,thick,gray]  (0,0) arc(330:320:5);
		\node[right,black]  (none) at (-3.2,-4.8) {$ \mathcal{T}_{k,i}$};;
		\node[right,black] (none) at (-2.4,4.9) {$ \mathcal{T}_{j,k}$};
		\draw[scale=6,thick,black]  (0,0) arc(330:320:5);
		\draw[scale=6,thick,black]  (0,0) arc(210:200:5);
		\node (none) at (0,0) [circle,fill,black,inner sep=1pt]{};
		\node[black,left] (none) at (0,-0.4) { 	$\mathcal{T}$};
		\node[black,right] (none) at (5.4,0) { 	$ \mathcal{T}_{i,j}$};
		\node[black,right] (none) at (-5,3.7) { 	$ B_{r}(\mathcal{T})$};
		\node[black,right] (none) at (1,-3) { 	$W_i$};
		\node[black,right] (none) at (2.1,1) { 	$W_{i,j}$};
		\node[black,right] (none) at (1,2.8) { 	$W_j$};
		\node[black,right] (none) at (-3.2,1.7) { 	$W_{j,k}$};
		\node[black,right] (none) at (-1.5,-3.1) { 	$W_{k,i}$};
		\node[black,right] (none) at (-4,0) { 	$W_k$};
		\end{tikzpicture}
	\end{center}
	
	\begin{center}
		\begin{tikzpicture}[scale=0.45]
				\fill[scale=8,inner color = orange!15!, outer color = orange!15!] (0,0) --  (0.81,0) arc(0:41:0.81) -- (0.525,0.525);;	
			\fill[scale=8,inner color = orange!15!, outer color = orange!15!] (0,0) --  (0.81,0) arc(0:-35:0.81) -- (0.48,-0.48) ;;	
	\fill[scale=8,inner color = red!13!, outer color =  red!13!] (0.19,0.71)-- (0.525,0.525) -- (0,0);
				\fill[scale=8,inner color = yellow!20!, outer color = yellow!20!] (0, 0) --	(0.481,-0.481) -- (0.17,-0.66);
					\fill [pattern=north east lines, pattern color=black!20!orange!50,scale=8]
					(0,0) --  (0.25,0.25) arc(45:-45:0.3525);;				
						\fill [pattern=north west lines, pattern color=black!15!red!40,scale=8]
						(0.091,0.3405)-- (0.25,0.25) -- (0,0)	;
							\fill [pattern=north west lines, pattern color=black!25!yellow!70,scale=8]
							(0.091,-0.3405)-- (0.25,-0.25) -- (0,0)	;
			\draw[scale=8,thick,dashed,gray]  (0.81,0) arc(0:116:0.81);;
				\draw[scale=8,thick,dashed,gray]  (0.81,0) arc(0:-124:0.81);;
		\draw[scale=8,thick,dashed,gray]  (0.091,0.3405) arc(75:120:0.3525);;
		\draw[scale=8,thick,gray,dashed] (0,0) -- (0.525,0.525);;	
		\draw[scale=8,thick,dashed,gray] (0,0) -- (0.19,0.71);;	
		\draw[scale=8,thick,dashed,gray]  (0.525,0.525) arc(275:279.5:6);
		\draw[scale=8,thick,dashed,gray] (0.48,-0.48) arc(274.5:280.5:6) ;;	
		\draw[scale=8,thick,dashed,gray]  (0.19,0.71)--(0.525,0.525);;	
		\draw[scale=8,thick,dashed,gray] (0,0) -- (0.481,-0.481);;	
		\draw[scale=8,thick,dashed,gray] (0,0) -- (0.17,-0.66);;		
		\draw[scale=8,thick,dashed,gray]  (0.17,-0.66)	--(0.48,-0.48);;
		\draw[scale=8,thick,dashed,gray]  (0.3525,0) arc(0:45:0.3525);;	
		\draw[scale=8,thick,dashed,gray]  (0.3525,0) arc(0:-45:0.3525);;	
		\node[black] (none) at (-0.35,1.3) {\small$\vareps$};
		\draw[scale=8,thick,dashed,gray]  (0.091,0.3405)-- (0.25,0.25);;	
		\draw[scale=8,thick,dashed,gray]  (0.091,-0.3405)-- (0.25,-0.25);;	
		\draw[scale=8,thick,dashed,gray]  (0.091,-0.3405) arc(285:240:0.3525);;	
		\draw[scale=8,thick,black]  (0,0) arc(270:280.1:6);
		\draw[scale=8,thick,gray]  (0,0) arc(330:320:5);
		\node[right,black]  (none) at (-5.3,-5.2) {$ \mathcal{T}_{k,i}$};;
		\node[right,black] (none) at (-4.2,5.4) {$ \mathcal{T}_{j,k}$};
		\draw[scale=8,thick,black]  (0,0) arc(330:320:5);
		\draw[scale=8,thick,gray,dashed]  (0.17,-0.66) arc(324:321:5);
		\draw[scale=8,thick,black]  (0,0) arc(210:200:5);
		\draw[scale=8,thick,gray,dashed]  (0.19,0.71) arc(202.5:200:5);
		\node (none) at (0,0) [circle,fill,black,inner sep=1pt]{};
		\draw[scale=8,black,<->]  (0.0,0.0)-- (-0.04,0.355);;
		\node[black,left] (none) at (0,0) { 	$\mathcal{T}$};
		\node[black,right] (none) at (6.8,0) { 	$ \mathcal{T}_{i,j}$};
			\node[black,right] (none) at (3.5,1.7) { 	\small $W^{\rho,+}_{i,j} $};
				\node[black,right] (none) at (3.5,-1.4) { 	\small $W^{\rho,-}_{i,j} $};
					\node[black,right] (none) at (0.7,0.55) { 	\small $ 	W^{\vareps,+}_{i,j}$};
				\node[black,right] (none) at (0.7,-0.55) {\small $W^{\vareps,-}_{i,j}$};
				\draw[scale=8,black,<->]  (1.05,0.095)-- (0.993,0.58);;
					\node[black,right] (none) at (7.4,2.5) { 	\small $\rho$};
							\draw[scale=8,black,->]  (0,0.0)-- (0.04,0.81);;
								\node[black,right] (none) at (-0.5,4.2) {\small $r$};
									\node[black,right] (none) at (1.4,3.6) {\small $W^{\rho}_{j}$};
								\node[black,right] (none) at (1.3,-3.6) {\small $W^{\rho}_{i}$};
								\node[black,right] (none) at (0.3,1.8) {\small $ 	W^{\vareps}_{j} $};
								\node[black,right] (none) at (0.2,-1.85) {\small $ 	W^{\vareps}_{i} $};
		\end{tikzpicture}
	\end{center}
	\begin{center}
	\begin{tikzpicture}[scale=0.45]
	\draw[scale=8,thick,dashed,gray]  (0.091,0.3405) arc(75:120:0.3525);;
	\draw[scale=8,thick,red] (0,0) -- (0.525,0.525);;	
	\draw[scale=8,thick,purple] (0,0) -- (0.19,0.71);;	
	\draw[scale=8,thick,dashed,gray]  (0.525,0.525) arc(275:279.5:6);
	\draw[scale=8,thick,dashed,gray] (0.48,-0.48) arc(274.5:280.5:6) ;;	
	\draw[scale=8,thick,dashed,gray]  (0.19,0.71)--(0.525,0.525);;	
	\draw[scale=8,thick,black!10!yellow] (0,0) -- (0.481,-0.481);;	
	\draw[scale=8,thick,black!20!green] (0,0) -- (0.17,-0.66);;		
	\draw[scale=8,thick,dashed,gray]  (0.17,-0.66)	--(0.48,-0.48);;
	\draw[scale=8,thick,orange]  (0.3525,0) arc(0:45:0.3525);;	
	\draw[scale=8,thick,orange]  (0.3525,0) arc(0:-45:0.3525);;	
	\draw[scale=8,thick,purple!20!pink]  (0.091,0.3405)-- (0.25,0.25);;	
	\draw[scale=8,thick,green!60!yellow]  (0.091,-0.3405)-- (0.25,-0.25);;	
	\draw[scale=8,thick,dashed,gray]  (0.091,-0.3405) arc(285:240:0.3525);;	
	\draw[scale=8,thick,black]  (0,0) arc(270:280.1:6);
	\draw[scale=8,thick,gray]  (0,0) arc(330:320:5);
	\draw[scale=8,thick,black]  (0,0) arc(330:320:5);
	\draw[scale=8,thick,gray,dashed]  (0.17,-0.66) arc(324:322:5);
	\draw[scale=8,thick,black]  (0,0) arc(210:200:5);
	\draw[scale=8,thick,gray,dashed]  (0.19,0.71) arc(202.5:200:5);
	\node (none) at (0,0) [circle,fill,black,inner sep=1pt]{};
	\node[black,left] (none) at (0,0) { 	$\mathcal{T}$};
	\node[right,black]  (none) at (-5.3,-5.2) {$ \mathcal{T}_{k,i}$};;
	\node[right,black] (none) at (-4.2,5.4) {$ \mathcal{T}_{j,k}$};
		\node[black,right] (none) at (6.8,0) { 	$ \mathcal{T}_{i,j}$};
	\node[black,right] (none) at (2.75,2.7) { 	\small $ R^+_{i,j}$};
		\node[black,right] (none) at (-0.4,4.7) { 	\small $ R^-_{j,k}$};
	\node[black,right] (none) at (2.6,-2.6) { 	\small $ R^-_{i,j}$};
		\node[black,right] (none) at (-0.4,-4.4) { 	\small $ R^+_{k,i}$};
\node[black,right] (none) at (2.45,1.1) { \small $H^{\vareps,+}_{i,j} $};
\node[black,right] (none) at (2.4,-1.1) { \small $H^{\vareps,-}_{i,j} $};
	\node[black,right] (none) at (0.9,2.7) { \small $ S^\vareps_{j}$};
		\node[black,right] (none) at (0.9,-2.8) { \small $ S^\vareps_{i}$};
	\end{tikzpicture}
\end{center}
	\caption{Notation and geometry for the construction of well-prepared initial data at a triple junction $\mathcal{T}$ ($d=2$) formed by the phases $i,j$ and $k$ for mutually distinct $i,j,k \in \{1,...,N\}$.}
\end{figure}

Let $r := \min_{p \in \mathcal{P}} r_p$, where $r_p$ admissible localization radius for the triple junction $\mathcal{T}_p$. Let $\rho \in (0, r) $.
Consider a triple junction $\mathcal{T}$, which we assume for simplicity formed by the phases $1,2$ and $3$.
Let $\vareps < \rho $, then for all $i \in \{1, 2, 3\}$ we define
\begin{enumerate}[label=(\roman*)]
	\item the two-dimensional regions \begin{align*}
	W^{\rho,+}_{i,i+1} &:= W_{i,i+1} \cap \{x \in \R^2: 0 \leq \dist^\pm(x, \mathcal{T}_{i,i+1}) \leq \rho \} \,, \\
	 W^{\rho,-}_{i,i+1} &:= W_{i,i+1} \cap \{x \in \R^2: -\rho \leq \dist^\pm(x, \mathcal{T}_{i,i+1}) \leq 0 \} \,, \\
	 	W^\rho_{i,i+1} &:= W^{\rho,+}_{i,i+1}  \cup  W^{\rho,-}_{i,i+1} \,,\\
	W^{\vareps,\pm}_{i,i+1} &:= W^{\rho,\pm}_{i,i+1} \cap B_\vareps(\mathcal{T}) \,, \\
	W^\vareps_{i,i+1} &:= W^{\vareps,+}_{i,i+1}  \cup  W^{\vareps,-}_{i,i+1}\,,
	\end{align*}
	satisfying the inclusions
	\begin{align*}
	  \mathcal{T}_{i,i+1}\cap B_r (\mathcal{T})  \subset \ W^\rho_{i,i+1}\,, \quad 
	\mathcal{T}_{i,i+1} \cap B_\vareps (\mathcal{T})  \subset 	W^\vareps_{i,i+1}  \subset  W^\rho_{i,i+1} \,;
	\end{align*}
	\item the one-dimensional segments resp. arcs  \begin{align*}
	 R^i_{i,i+1}&:= \left(\mathcal{T}+ \left\{a X_{i,i+1}^{i}: a \in(0, \infty)\right\}\right) \cap \partial{	W^\rho_{i, i+1} }\,, \\
	 R^{+}_{i,i+1}&:= R^{i+1}_{i,i+1}, \quad  R^{ -}_{i,i+1}:= R^i_{i,i+1}\,, \\
	H^\vareps_{i,i+1} &:= W^\rho_{i,i+1} \cap \partial B_\vareps(\mathcal{T}) \,, 	\\
	H^{\vareps,+}_{i,i+1} &:= H^\vareps_{i,i+1} \cap \{x \in \R^2: 0 \leq \dist^\pm(x, \mathcal{T}_{i,i+1}) \leq \rho \} \,, \\
	H^{\vareps,-}_{i,i+1} &:= H^\vareps_{i,i+1} \cap \{x \in \R^2: - \rho \leq \dist^\pm(x, \mathcal{T}_{i,i+1}) \leq 0 \} \,, 
	\end{align*}
	\item $S^\vareps_{i}$ as the segment connecting $ R^i_{i,i+1} \cap \partial B_\vareps(\mathcal{T}) $ to $R^i_{i,i-1} \cap \partial B_\vareps(\mathcal{T}) $;
	\item  the two-dimensional triangular regions $W^\rho_{i}$ resp. $ W^\vareps_{i}$
		 as the one with sides $R^i_{i,i+1}$ and $R^i_{i,i-1}$  resp. the one delimitated by $S^\vareps_{i}$, $R^i_{i,i+1}$ and $R^i_{i,i-1}$, thus satisfying the inclusions
		 \begin{align*}
		 	W^\vareps_{i} \subset W^\rho_{i} \subset  W_{i} \,.
		 \end{align*}
\end{enumerate}

Furthermore, we introduce $P_{ R^{+}_{i,i+1}}: W^\rho_{i} \cup  (W^\rho_{i,i+1} \cap \{x \in \Psi_{i, i+1}({I}_{i,i+1} \times [0,\rho))\} ) \rightarrow R^{+}_{i,i+1} $, $P_{ R^-_{i,i+1}}: W^\rho_{i} \cup  (W^\rho_{i,i+1} \cap \{x \in \Psi_{i, i+1}({I}_{i,i+1} \times (-\rho,0])\} ) \rightarrow R^-_{i,i+1} $, $P_{ H^\vareps_{i,i+1} }: W^\vareps_{i,i+1}  \rightarrow H^\vareps_{i,i+1}$ and $P_{ S^\vareps_{i} }:  W^\vareps_{i}  \rightarrow S^\vareps_{i}$ as the orthogonal projections onto the nearest point on $R^{+}_{i,i+1}$, $R^{-}_{i,i+1}$, $H^\vareps_{i,i+1} $ and $S^\vareps_{i} $, respectively.

\subsection{Construction of the initial datum}

We construct the initial datum $u_\vareps(0)$ seperately in each two-dimensional region identified by the geometry of the network as introduced above. Then, we show that in each of these regions the initial relative energy estimate \eqref{eq:initialestimate} holds true.

\paragraph{\textit{Neighborhood of a connected component of a two-phase interface.}}
Let $\mathcal{T}_{i, j}$ be a connected component of $I_{i,j}$ with either one or two endpoints at a triple junction for some distinct $i,j \in  \{1,...,N\}$. Let $\mathcal{P}_{i, j} \subset \mathcal{P}$ enumerate the numbers of triple junctions as endpoints of $\mathcal{T}_{i, j}$. Then, we can define 
\begin{align*}
M_{i,j} :=& \bigg(\{x \in  \R^2: - \rho \leq \dist^\pm(x, \mathcal{T}_{i,i+1}) \leq \rho\} \setminus \bigcup_{p \in \mathcal{P}_{i, j} } B_r(\mathcal{T}_p)\bigg)\\
&\cup \bigg(\bigcup_{p \in \mathcal{P}_{i, j} } (W^\rho_{i,j} \setminus W^\vareps_{i,j} ) \bigg)  \,.
\end{align*}
In the two-dimensional region $M_{i,j}$ we define the initial datum $u_0$ by means of the rescaled one-dimensional equilibrium profile \eqref{eq:equilibriumprofile} as follows
\begin{align}
	u_0(x) := \theta_{i,j}(\vareps^{-1} \dist^\pm(x, \mathcal{T}_{i, j})) \quad \text{ for any } x \in M_{i,j}\,,
\end{align}
whence we obtain
\begin{align*}
	&E_{M_{i,j}}[u_\vareps|\xi](0) \\
	&:= \int_{M_{i,j}} \frac{1}{2 \vareps } | (\theta_{i,j})^\prime(\vareps^{-1} \dist^\pm(x, \mathcal{T}_{i, j})) |^2 + \frac{1}{\vareps} W(\theta_{i,j}(\vareps^{-1} \dist^\pm(x, \mathcal{T}_{i, j}))) \\
	&\quad\quad\quad  -  \frac{1}{2 \vareps }( \xi_{i,j} \cdot n_{i,j}) (\theta_{i,j})^\prime (\vareps^{-1} \dist^\pm(x, \mathcal{T}_{i, j}))\cdot \partial_u \psi_{i,j}(\theta_{i,j}(\vareps^{-1} \dist^\pm(x, \mathcal{T}_{i, j}))) \dx \\
	&\leq \int_{M_{i,j}} \frac{1}{2 \vareps } | (\theta_{i,j})^\prime (\vareps^{-1} \dist^\pm(x, \mathcal{T}_{i, j}))|^2 + \frac{1}{\vareps} W(\theta_{i,j}(\vareps^{-1} \dist^\pm(x, \mathcal{T}_{i, j}))) \\
	&\quad\quad\quad-  \frac{1}{ \vareps } |\xi_{i,j} |^2 \frac{1}{\bar{\theta}^\pm_{i,j}}
	 2W(\theta_{i,j}(\vareps^{-1} \dist^\pm(x, \mathcal{T}_{i, j}))) \dx \\
	& \leq \int_{M_{i,j}} \hspace{-3mm}(1-  |\xi_{i,j} |^2 )	\bigg[ \frac{1}{2 \vareps } | (\theta_{i,j})^\prime(\vareps^{-1} \dist^\pm(x, \mathcal{T}_{i, j})) |^2 \hspace{-0.5mm}+ \frac{1}{\vareps} W(\theta_{i,j}(\vareps^{-1} \dist^\pm(x, \mathcal{T}_{i, j}))) \bigg]\hspace{-0.5mm} \dx.
\end{align*}
Here, we used that $\psi_k=0$ along $\gamma_{i,j}$ for any $k \in \{1,...,N \}\setminus\{i , j \}$ and  that $\partial_u \psi_{i,j}(\gamma_{i,j}) \cdot \frac{\gamma_{i,j}^\prime}{|\gamma_{i,j}^\prime|}  =2 \sqrt{2W(\gamma_{i , j }) }$.  Indeed, assumption (A4) implies $ \partial_u \psi_{i,j}(\gamma_{i , j }) \cdot {\gamma_{i,j}^\prime} \leq  2\sqrt{2W(\gamma_{i , j }) } {|\gamma_{i,j}^\prime|}$, then (A3) gives the equality sign by contradiction.
Then, in the last step we added a zero and we used $| (\theta_{i,j})^\prime (s)|^2  \leq \frac{1}{(\bar{\theta}^\pm_{i,j})^2} 2W(\theta_{i,j}(s))$.
Note that $1-  |\xi_{i,j} |^2  \leq c \dist^2(\cdot, \mathcal{T}_{i,j})$, and that $W(\theta_{i,j}(s))$ has an exponential resp. a power-law decay of order $\frac{2q}{q-2}$ for $q =2$ resp. $q >2$ as $s$ approaches the extrema of $(-\rho,\rho)$, then it vanishes for $s \in (-\infty,-\rho]\cup [\rho, \infty)$. As a consequence, we obtain $E_{M_{i,j}}[u_\vareps|\xi](0) \leq C \vareps^2$ for some constant $C>0$. 

\paragraph{\textit{Pure-phase region.}} 
Let $i \in \{1,...,N\}$. Let $\mathcal{P}_{i} \subset \mathcal{P}$ enumerate the numbers of triple junctions as endpoints of a connected component of an interface between the phase $i$ and any other one.
We set $u_0=\alpha_i$ in the pure-phase region
$\{\bar{\chi}_i=1\} \setminus \left(\bigcup_{p \in \mathcal{P}_{i} }W_i \cup \bigcup_{j: j\neq i } \{x - s n_{i,j}(x), \, x \in  {I}_{i,j}\,, s \in [0, \rho)\} \right)$. Then
$|\nabla u_0 |=0$ and, having $W(\alpha_i)=0$, the initial relative entropy is equal to zero in the pure-phase region.

\paragraph{\textit{Triple junction wedge containing a connected component of a two-phase interface.}} 
Given a triple junction $\mathcal{T}$, let $i,j \in \{1,...,N\}$,$i \neq j$, be two of the three phases forming $\mathcal{T}$ and let $k \in \{1,...,N\} \setminus\{i,j\}$ be the third one. 
The initial datum $u_0$ in the corresponding wedge $W^{\vareps, \pm}_{i,j}$ is given by interpolation via orthogonal projections $P_{ H^\vareps_{i,j} }, P_{ {I}_{i,j}}$ and $P_{ R^{\pm}_{i,j}}$, which reads as
\begin{align*}
	u_0(x) = &\frac{\dist(x, {I}_{i,j}) + \dist(x, R^{ \pm}_{i,j})}{\dist(x, H^\vareps_{i,j} ) + \dist(x,{I}_{i,j}) + \dist(x, R^{ \pm}_{i,j})} u_{ H^\vareps_{i,j} }( P_{ H^\vareps_{i,j} } x) \\
	&+ \frac{\dist(x,H^\vareps_{i,j} ) + \dist(x,R^{\pm}_{i,j})}{\dist(x, H^\vareps_{i,j} ) + \dist(x, {I}_{i,j}) + \dist(x, R^{\pm}_{i,j})} u_{ {I}_{i,j}}(P_{ {I}_{i,j}}x) \\
	&+ \frac{\dist(x,H^\vareps_{i,j} ) + \dist(x,{I}_{i,j} )}{\dist(x, H^\vareps_{i,j} ) + \dist(x, {I}_{i,j}) + \dist(x, R^{ \pm}_{i,j})}  u_{ R_{i,j}} ( P_{ R^{ \pm}_{i,j}} x )
\end{align*}
for any $x \in W^{\vareps, \pm}_{i,j}$, where 
\begin{align*}
	&u_{ H^\vareps_{i,j} }(x) = \frac{h^{ \pm}_{i,j}(x, \mathcal{T})}{h^{\vareps, \pm}_{i,j}} \theta_{i,j}( \pm \vareps^{-1} \bar{h}^{\vareps, \pm}_{i,j}) + \frac{h^{\vareps, \pm}_{i,j} - h^{ \pm}_{i,j} (x, \mathcal{T})}{h^{\vareps, \pm}_{i,j}} \theta_{i,j}(0) \notag \\
	&\text{for any } x \text{ along } H^\vareps_{i,j} \,,\\
	& u_{ {I}_{i,j}} (x) = \frac{l_{i,j}(x, \mathcal{T})}{l^\vareps_{i,j}} \theta_{i,j}(0) + \frac{l^\vareps_{i,j} - l_{i,j}(x, \mathcal{T})}{l^\vareps_{i,j}} \bar{\alpha} \notag\\
		&\text{for any } x \text{ along } {I}_{i,j} \,, \\
			& u_{ R_{i,j}} (x) = \frac{r^\pm_{i,j}(x, \mathcal{T})}{\vareps} \theta_{i,j}( \pm \vareps^{-1}  \bar{h}^{\vareps,\pm}_{i,j}) + \frac{\vareps - r^\pm_{i,j}(x, \mathcal{T})}{\vareps} \bar{\alpha} \notag\\
		&\text{for any } x \text{ along } R^{ \pm}_{i,j} \,, 
\end{align*}
where $\bar{\alpha} = \frac{\alpha_i + \alpha_j + \alpha_k}{3} $, $ \bar{h}^{\vareps, \pm}_{i,j}:= \dist^\pm(H^\vareps_{i,j} \cap R^{\pm}_{i,j}, {I}_{i,j} )$, $l^\vareps_{i,j}$ resp. $h^{\vareps, \pm}_{i,j}$ is the length of ${I}_{i,j} \cap B_\vareps(\mathcal{T})$ resp. $\partial B_\vareps(\mathcal{T}) \cap W^{\vareps, \pm}_{i,j}$, whereas $h^{ \pm}_{i,j}(\cdot, \mathcal{T})$,
$l_{i,j}(\cdot, \mathcal{T})$ resp. $r^\pm_{i,j}(\cdot, \mathcal{T})$ is the length of the path along $H^{\vareps,  \pm}_{i,j}$ , ${I}_{i,j}$ resp. $R^{\pm}_{i,j}$ connecting to $\mathcal{T}$.
Since $l^\vareps_{i,j}$ and $h^{\vareps, \pm}_{i,j} $ are of order $\vareps$, then this construction gives 
that $|\nabla u_0(x) |$ is of order $ {1}/{\vareps}$ $ \text{for any } x \in W^\vareps_{i,j} $.
Hence, being the area of $W^\vareps_{i,j}$ of order $\vareps^2$, we can deduce 
\begin{align*}
E_{W^{\vareps}_{i,j}}[u_\vareps|\xi](0) :=& \int_{W^{\vareps}_{i,j}} \frac{\vareps}{2  } |\nabla u_0|^2 + \frac{1}{\vareps} W(u_0)  
+ \sum_{\ell=1}^N \xi_\ell \cdot \nabla (\psi_{\ell} \circ u_0) \dx \\
\leq & \int_{W^{\vareps}_{i,j}} \frac{\vareps}{2  } |\nabla u_0|^2 + \frac{C}{\vareps} 
+  C | \nabla  u_0| \dx 
\leq C \vareps 	\,,
\end{align*}
for some constant $C>0$ varying from line to line.

\paragraph{\textit{Triple junction wedge not containing any connected component of a two-phase interface.}} 
Given a triple junction $\mathcal{T}$, let $i,j,k \in \{1,...,N\}$ be three distinct phases forming $\mathcal{T}$.
The initial datum $u_0$ in the region $W^\vareps_{j}$ is given by interpolation via orthogonal projections $P_{ S^\vareps_{j} }, P_{ R^-_{j,k}}$ and $P_{ R^+_{i,j}}$, which reads as
\begin{align*}
u_0(x) = &\frac{\dist(x, R^-_{j,k}) + \dist(x, R^+_{i,j})}{\dist(x, S^\vareps_{j} ) + \dist(x, R^-_{j,k}) + \dist(x, R^+_{i,j})} u_{ H^\vareps_{i,j} }( P_{ S^\vareps_{j} } x) \\
&+ \frac{\dist(x,S^\vareps_{j} ) + \dist(x,R^+_{i,j})}{\dist(x, S^\vareps_{j} ) + \dist(x, R^-_{j,k}) + \dist(x, R^+_{i,j})} u_{ R^-_{j,k}}(P_{ R^-_{j,k}}x) \\
&+ \frac{\dist(x,S^\vareps_{j} ) + \dist(x,R^-_{j,k} )}{\dist(x, S^\vareps_{j} ) + \dist(x, R^-_{j,k}) + \dist(x, R^+_{i,j})} u_{ R^+_{i,j} }( P_{ R^{+}_{i,j}} x ) \,,
\end{align*}
for any $x \in W^\vareps_{j}$, where
\begin{align}
&u_{ S^\vareps_{j} }(x) = \frac{s_j(x)}{s^\vareps_{j}} \theta_{i,j}( \vareps^{-1} \bar{h}^\vareps_{i,j}) + \frac{s^\vareps_{j} - s_j(x)}{s^\vareps_{j}} \theta_{j,k}(-\vareps^{-1} \bar{h}^\vareps_{j,k}) \notag \\
&\text{for any } x \text{ along } S^\vareps_{j} \,,\\
&u_{ R^-_{j,k}}(x) = \frac{r_{j,k}(x, \mathcal{T})}{\vareps} \theta_{j,k}(-\vareps^{-1} \bar{h}^\vareps_{j,k}) + \frac{\vareps - r_{j,k}(x, \mathcal{T})}{\vareps} \bar{\alpha} \notag\\
&\text{for any } x \text{ along } R^-_{j,k} \,, \\
&u_{ R^+_{i,j}}(x) = \frac{r_{i,j}(x, \mathcal{T})}{\vareps} \theta_{i,j}(\vareps^{-1} \bar{h}^\vareps_{i,j}) + \frac{\vareps - r_{i,j}(x, \mathcal{T})}{\vareps} \bar{\alpha} \notag\\
&\text{for any } x \text{ along } R^+_{i,j} \,, 
\end{align}
where $ \bar{h}^\vareps_{i,j}:= \dist(H^\vareps_{i,j} \cap R^{+}_{i,j}, {I}_{i,j} )$, $ \bar{h}^\vareps_{j,k}:= \dist(H^\vareps_{j,k} \cap R^{-}_{j,k}, {I}_{j,k} )$,
$s^\vareps_j$ is the lenght of the segment $S^\vareps_{j}$,
whereas $s_j$ resp. $r_{i,j}(\cdot, \mathcal{T})$ is the lenght of the path along $S^\vareps_{j}$ resp. $R^{+}_{i,j}$ connecting to $R^-_{j,k}$ resp. $\mathcal{T}$.
Since $s^\vareps_{j}$ is of order $ \vareps$, we have that
$
|\nabla u_0(x) |$ is of order ${1}/{\vareps}$ { for any } $x \in W^\vareps_{j}$,
whence as above we can deduce that
$
E_{W^\vareps_{j}}[u_\vareps|\xi](0) :=\int_{W^{\vareps}_{j}} \frac{\vareps}{2  } |\nabla u_0|^2 + \frac{1}{\vareps} W(u_0)  
+ \sum_{\ell=1}^N \xi_\ell \cdot \nabla (\psi_{\ell} \circ u_0) \dx \leq C \vareps,
$
for some constant $C>0$.

\paragraph{\textit{Interpolation between two rescaled one-dimensional equilibrium profiles.}}
First, we introduce for any $x \in W^\rho_{j} \setminus W^\vareps_{j}$ the set of coordinates $(s,h)$, where $h$ denotes the distance from $S_j^\vareps$ while $s$ is such that $\nabla s \cdot \nabla h =0$ and $s=0$ whenever $x \in R^-_{j,k}$. 
Hence, $h \in [0,\tilde{r}_\vareps ]$, where $\tilde{r}_\vareps:=( r- \varepsilon)\cos(\frac{\pi}{12})$ is fixed, and $s \in [0,2 \tilde{h}_\vareps \sin(\frac{\pi}{12})]$, where $\tilde{h}_\vareps:= ( \cos(\frac{\pi}{12}))^{-1}h + \vareps $.
The initial datum $u_0$ in $W^\rho_{j}\setminus W^\vareps_{j}$ is given as follows
\begin{align}
	u_0(x)=&  \frac{s}{2 \tilde{h}_\vareps \sin(\frac{\pi}{12})}  \theta_{j,k}(-\vareps^{-1} \dist(P^s_{R^-_{j,k}}x, {I}_{j,k})) \notag\\
	&+ \frac{2 \tilde{h}_\vareps \sin(\frac{\pi}{12})-s}{2 \tilde{h}_\vareps \sin(\frac{\pi}{12}) }  \theta_{i,j}(\vareps^{-1} \dist(P^s_{R^+_{i,j}}x, {I}_{i,j})) \,,
\end{align}
for any $x \in W^\rho_{j}\setminus W^\vareps_{j}$, where $P^s_{R^-_{j,k}}$ and $P^s_{R^+_{i,j}}$ projections along the $s$-axis onto $R^-_{j,k}$ and $R^+_{i,j}$, respectively.
Hence, we compute 
\begin{align*}
&\partial_s u_0(x)\\
&=  \frac{1}{2 \tilde{h}_\vareps \sin(\frac{\pi}{12})}  \left(\theta_{j,k}(-\vareps^{-1} \dist(P^s_{R^-_{j,k}}x, {I}_{j,k}))  - \theta_{i,j}(\vareps^{-1} \dist(P^s_{R^+_{i,j}}x, {I}_{i,j})) \right) \,, 
\end{align*}
and
\begin{align*}
&\partial_h u_0(x) \\
&=  \frac{s}{2 \tilde{h}^2_\vareps \sin(\frac{\pi}{12})  \cos(\frac{\pi}{12})} \left(\theta_{j,k}(-\vareps^{-1} \dist(P^s_{R^-_{j,k}}x, {I}_{k,j}))  - \theta_{i,j}(\vareps^{-1} \dist(P^s_{R^+_{i,j}}x, {I}_{i,j})) \right) \\
&\quad- \frac{s}{2 \vareps \tilde{h}_\vareps \sin(\frac{\pi}{12})}  \partial_h \dist(P^s_{R^-_{j,k}}x, {I}_{j,k}) ( \theta_{j,k})^\prime(-\vareps^{-1} \dist(P^s_{R^-_{j,k}}x, {I}_{j,k})) \\
&\quad + \frac{2  \tilde{h}_\vareps \sin(\frac{\pi}{12})-s}{2 \vareps \tilde{h}_\vareps \sin(\frac{\pi}{12}) } \partial_h \dist(P^s_{R^+_{i,j}}x, {I}_{i,j}) ( \theta_{i,j})^\prime(\vareps^{-1} \dist(P^s_{R^+_{i,j}}x, {I}_{i,j})) \,.
\end{align*}
For the sake of brevity, we introduce the notation $ \tilde{\lambda}(x):= \frac{s}{2 \tilde{h}_\vareps \sin(\frac{\pi}{12})} \in [0,1]$ for any $x \in W^\rho_{j}\setminus W^\vareps_{j}$. 
By adding zeros and using the Young inequality together with the fact that $s^2 \leq 4 \tilde{h}^2_\vareps \sin^2(\frac{\pi}{12})$, one can obtain
\begin{align*}
&\frac{\vareps}{2} | \nabla u_0 (x)|^2 =\frac{\vareps}{2} |\partial_s u_0(x)|^2 +\frac{\vareps}{2} |\partial_h u_0(x)|^2\\
& \leq C 
 \frac{\vareps}{ \tilde{h}^2_\vareps }  |\theta_{j,k}(- \vareps^{-1} \dist(P^s_{R^-_{j,k}}x, {I}_{j,k}))  - \alpha_j |^2\\
 &\quad + C 
 \frac{\vareps}{ \tilde{h}^2_\vareps }  | \theta_{i,j}(\vareps^{-1} \dist(P^s_{R^+_{i,j}}x, {I}_{i,j}))- \alpha_j |^2 \\
 &\quad + C  \tilde{\lambda}^2(x) \frac{1}{ \vareps}  | ( \theta_{j,k})^\prime(- \vareps^{-1} \dist(P^s_{R^-_{j,k}}x, {I}_{j,k}))|^2 \\
 &\quad+ C(1-\tilde{\lambda}(x))^2 \frac{1}{ \vareps } | (\theta_{i,j})^\prime(\vareps^{-1} \dist(P^s_{R^+_{i,j}}x, {I}_{i,j})|^2\,,
\end{align*}
for some constant $C>0$.
First, we consider
\begin{align*}
&\int_{W^\rho_{j}\setminus W^\vareps_{j}}  \frac{\vareps}{ \tilde{h}^2_\vareps }  |\theta_{j,k}(-\vareps^{-1} \dist(P^s_{R^-_{j,k}}x, {I}_{j,k}))  - \alpha_j |^2\dx \\
&= 2 \sin(\tfrac{\pi}{12}) \int_{0}^{\tilde{r}_\vareps}  \frac{\vareps}{ \tilde{h}_\vareps }  |\theta_{j,k}(-\vareps^{-1} \dist(P^s_{R^-_{j,k}}x, {I}_{k,j}))  - \alpha_j |^2\, \mathrm{d} h\\
& \leq C \int_{0}^{\tilde{r}_\vareps}  |\theta_{j,k}(- \vareps^{-1} \dist(P^s_{R^-_{j,k}}x, {I}_{j,k}))  - \alpha_j |^2\,  \mathrm{d} h \\
&\leq C \int_{0}^{\tilde{r}_\vareps}  \frac{h}{\vareps}   |\theta_{j,k}(\vareps^{-1} \dist(P^s_{R^-_{j,k}}x, {I}_{j,k}))  - \alpha_j | |( \theta_{j,k})^\prime(- \vareps^{-1} \dist(P^s_{R^-_{j,k}}x, {I}_{j,k}))|  \, \mathrm{d} h \\
&\leq C  \int_{0}^{\tilde{r}_\vareps}   \frac{h}{\vareps}    |( \theta_{j,k})^\prime(- \vareps^{-1} \dist(P^s_{R^-_{j,k}}x, {I}_{j,k}))|  \, \mathrm{d} h \,,
\end{align*}
where we integrated by parts and $C>0$ is a suitable constant varying from line to line. Then, we observe that $\dist(P^s_{R^-_{j,k}}x, {I}_{j,k})$ is a homogeneous and increasing function with respect to $h$. On the other hand, we recall that $| (\theta_{j,k})^\prime (s)|$  has an exponential resp. a power-law decay of order $\frac{q}{q-2}$ for $q =2$ resp. $q >2$ as $s$ approaches the extrema of $(-\rho,\rho)$, then it vanishes for $s \in (-\infty,-\rho]\cup [\rho, \infty)$.
As a consequence, we obtain  
\begin{align*}
&\int_{W^\rho_{j}\setminus W^\vareps_{j}}  \frac{\vareps}{ \tilde{h}^2_\vareps }  |\theta_{j,k}(- \vareps^{-1} \dist(P^s_{R^-_{j,k}}x, {I}_{j,k}))  - \alpha_j |^2\dx  \leq C \vareps\,,
\end{align*}
and analogously
\begin{align*}
&\int_{W^\rho_{j}\setminus W^\vareps_{j}}  \frac{\vareps}{ \tilde{h}^2_\vareps }  |\theta_{i,j}(\vareps^{-1} \dist(P^s_{R^+_{i,j}}x, {I}_{i,j}))  - \alpha_j |^2\dx  \leq C \vareps\,.
\end{align*}
Second, we have 
\begin{align*}
	&\int_{W^\rho_{j}\setminus W^\vareps_{j}}  \tilde{\lambda}^2(x) \frac{1}{ \vareps}  | ( \theta_{j,k})^\prime(-\vareps^{-1} \dist(P^s_{R^-_{j,k}}x, {I}_{j,k}))|^2\dx \\&=  2 \sin(\tfrac{\pi}{12}) \int_{0}^{\tilde{r}_\vareps}
	\tilde{\lambda}^2(x) \frac{\tilde{h}_\vareps}{ \vareps}  | ( \theta_{j,k})^\prime(-\vareps^{-1} \dist(P^s_{R^-_{j,k}}x, {I}_{j,k}))|^2 \mathrm{d} h\\
	& \leq C\int_{0}^{\tilde{r}_\vareps} \Big(\frac{{h} }{ \vareps} + 1 \Big)
 | ( \theta_{j,k})^\prime(-\vareps^{-1} \dist(P^s_{R^-_{j,k}}x, {I}_{j,k}))|^2 \mathrm{d} h\\
 &\leq C \vareps\,, 
\end{align*}
and analogously
\begin{align*}
\int_{W^\rho_{j}\setminus W^\vareps_{j}} (1- \tilde{\lambda}(x))^2 \frac{1}{ \vareps}  | ( \theta_{i,j})^\prime(\vareps^{-1} \dist(P^s_{R^+_{i,j}}x, {I}_{i,j}))|^2 \dx 
\leq C \vareps\,.
\end{align*}
Observe that by adding zeros we can write 
\begin{align*}
W(u_0(x)) =  &\,  \tilde{\lambda}(x) \left(W(u_0(x)) -  W(\theta_{j,k}(-\vareps^{-1} \dist(P^s_{R^-_{j,k}}x, {I}_{j,k}))  )  \right)\\
&+ (1-  \tilde{\lambda}(x)) \left( W(u_0(x)) -  W(\theta_{i,j}(\vareps^{-1} \dist(P^s_{R^+_{i,j}}x, {I}_{i,j}))  )\right)\\
 &+  \tilde{\lambda}(x) W(\theta_{j,k}(-\vareps^{-1} \dist(P^s_{R^-_{j,k}}x, {I}_{j,k}))  )\\
&+  (1-  \tilde{\lambda}(x)) W(\theta_{i,j}(\vareps^{-1} \dist(P^s_{R^+_{i,j}}x, {I}_{i,j}))  ) \,,
\end{align*} 
where $ \tilde{\lambda}(x), 1-  \tilde{\lambda}(x) \in [0,1]$ for any $x \in W^\rho_{j}\setminus W^\vareps_{j}$.
Since $W$ is Lipschitz (see (A1) in Sec. \ref{sec:mainresults}), then by adding zeros we obtain
\begin{align*}
	&\left| \frac{1}{\vareps} W(u_0(x)) - \frac{1}{\vareps} W(\theta_{j,k}(-\vareps^{-1} \dist(P^s_{R^-_{j,k}}x, {I}_{j,k}))  ) \right|\\
	&\leq  \frac{C}{\vareps} | {u_0(x)} - \theta_{j,k}(-\vareps^{-1} \dist(P^s_{R^-_{j,k}}x, {I}_{j,k}))  | \\
	&\leq \frac{C}{\vareps}	(1 - \tilde{\lambda}(x)) \left(
	|  \theta_{j,k}(-\vareps^{-1} \dist(P^s_{R^-_{j,k}}x, {I}_{j,k}))  - \alpha_j | + | \theta_{i,j}(\vareps^{-1} \dist(P^s_{R^+_{i,j}}x, {I}_{i,j})) - \alpha_j | 
	\right)
\end{align*}
and 
\begin{align*}
&\left| \frac{1}{\vareps}  W(u_0(x)) - \frac{1}{\vareps} W(\theta_{i,j}(\vareps^{-1} \dist(P^s_{R^+_{i,j}}x, {I}_{i,j}))  ) \right|\\
&\leq  \frac{C}{\vareps} | u_0(x) - \theta_{i,j}(\vareps^{-1} \dist(P^s_{R^+_{i,j}}x, {I}_{i,j}))  | \\
&\leq \frac{C}{\vareps} \tilde{\lambda}(x) \left(
|  \theta_{j,k}(-\vareps^{-1} \dist(P^s_{R^-_{j,k}}x, {I}_{j,k}))  - \alpha_j | + | \theta_{i,j}(\vareps^{-1} \dist(P^s_{R^+_{i,j}}x, {I}_{i,j})) - \alpha_j | 
\right) .
\end{align*}
In particular, we have
\begin{align*}
&\int_{W^\rho_{j}\setminus W^\vareps_{j}}  \frac{1}{ \vareps }  |\theta_{j,k}(- \vareps^{-1} \dist(P^s_{R^-_{j,k}}x,{I}_{j,k}))  - \alpha_j | \dx \\
&= 2 \sin(\tfrac{\pi}{12}) \int_{0}^{\tilde{r}_\vareps}  \frac{\tilde{h}_\vareps }{ \vareps}  |\theta_{j,k}(-\vareps^{-1} \dist(P^s_{R^-_{j,k}}x, {I}_{k,j}))  - \alpha_j | \, \mathrm{d} h\\
& \leq C \int_{0}^{\tilde{r}_\vareps}  \frac{( \cos(\frac{\pi}{12}))^{-1}h + \vareps  }{ \vareps}  |\theta_{j,k}(-\vareps^{-1} \dist(P^s_{R^-_{j,k}}x, {I}_{j,k}))  - \alpha_j | \,  \mathrm{d} h \\
&\leq C \int_{0}^{\tilde{r}_\vareps}    \left( \frac{h^2}{\vareps^2}  + \frac{h}{\vareps}  \right) |( \theta_{j,k})^\prime(-\vareps^{-1} \dist(P^s_{R^-_{j,k}}x, {I}_{j,k}))|  \, \mathrm{d} h \,,
\end{align*}
where we integrated by parts and $C>0$ is a suitable constant varying from line to line. Once again since $\dist(P^s_{R^-_{j,k}}x, {I}_{j,k})$ is a homogeneous and increasing function with respect to $h$, recalling the decay of $| (\theta_{j,k})^\prime (s)|$ in $s$ mentioned above, then we get
\begin{align*}
&\int_{W^\rho_{j}\setminus W^\vareps_{j}}  \frac{1}{ \vareps }  |\theta_{j,k}(-\vareps^{-1} \dist(P^s_{R^-_{j,k}}x, {I}_{j,k}))  - \alpha_j | \dx   \leq C \vareps\,,
\end{align*}
and also
\begin{align*}
&\int_{W^\rho_{j}\setminus W^\vareps_{j}}  \frac{1}{ \vareps }  |\theta_{i,j}(\vareps^{-1} \dist(P^s_{R^+_{i,j}}x, {I}_{i,j}))  - \alpha_j | \dx   \leq C \vareps\,.
\end{align*}
Similarly, we estimate
\begin{align*}
	&\int_{W^\rho_{j}\setminus W^\vareps_{j}}  \frac{1}{\vareps} W(\theta_{j,k}(\vareps^{-1} \dist(P^s_{R^-_{j,k}}x, {I}_{j,k}))  )
	\, \mathrm{d} x \\
	&\leq C \int_{0}^{\tilde{r}_\vareps} \frac{\tilde{h}_\vareps }{\vareps} W(\theta_{j,k}(-\vareps^{-1} \dist(P^s_{R^-_{j,k}}x, {I}_{j,k}))  )
	\, \mathrm{d} h 
	 \leq C \vareps \,, 
\end{align*}
due to the fact that $\dist(P^s_{R^-_{j,k}}x, {I}_{j,k})$ is a homogeneous and increasing function with respect to $h$ and the decay of $W(\theta_{j,k}(s))$ in $s$. Analogously, we have  
\begin{align*}
	\int_{M_{j}}   \frac{1}{\vareps} W(\theta_{i,j}(\vareps^{-1} \dist(P^s_{R^+_{i,j}}x,{I}_{i,j}))  ) \, \mathrm{d} x  \leq C \vareps \,.
\end{align*} 

Finally, \eqref{eq:sum} together with (A4) (see Sec. \ref{sec:mainresults}) and an application of the Young inequality give
\begin{align*}
		\left| \sum^N_{\ell=1} \int_{W^\rho_{j}\setminus W^\vareps_{j}} \xi_\ell \cdot \nabla (\psi_{\ell} \circ u_0) \dx \right| &\leq C  \int_{W^\rho_{j}\setminus W^\vareps_{j}} \sqrt{2W(u_0)} |\nabla u_0| \dx  \\
		&\leq C  \left[   \int_{W^\rho_{j}\setminus W^\vareps_{j}} \frac{\vareps}{2} |\nabla u_0|^2 \dx  +   \int_{W^\rho_{j}\setminus W^\vareps_{j}} \frac1{\vareps} W(u_0)  \dx \right] .
\end{align*}
Then, from the estimates above we can conclude that
\begin{align*}
		E_{W^\rho_{j}\setminus W^\vareps_{j}}[u_\vareps|\xi](0) := \int_{W^\rho_{j}\setminus W^\vareps_{j}} \frac{\vareps}{2  } |\nabla u_0|^2 + \frac{1}{\vareps} W(u_0)  
		+ \sum_{\ell=1}^N \xi_\ell \cdot \nabla (\psi_{\ell} \circ u_0) \dx \leq
		C \vareps 	
\end{align*} for some constant $C>0$.

\section{Suitable multi-well potentials and a construction for the $\psi_i$}

We next proceed to show that the class of $N$-well potentials satisfying assumptions (A1)--(A4) is in fact sufficiently broad.

\subsection{A class of multi-well potentials} \label{subsec:multi-wellpot}
Let \(\triangle^{N-1}\) be a $(N-1)$-simplex with edges of unit length in $\R^{N-1}$. 
We denote by \(\{\gamma_{i,j}\}_{i,j: i \neq j }\) its edges and by \(\{\alpha_{i}\}_{i}\) its $N$ vertices, so that $|\alpha_i -\alpha_j|=1$ for any mutually distinct \(i,j \in \{1,...,N\}\). 
We can decompose $\triangle^{N-1}$ (almost) symmetrically into a disjoint partition $\{\mathcal{T}_{i,j}\}_{i,j: i < j }$ such that each point $x\in \triangle^{N-1}$ is assigned to the set $\mathcal{T}_{i,j}$ if $\gamma_{i,j}$ is the edge of $\triangle^{N-1}$ that is closest to $x$, with $x$ being assigned to the edge with the lowest $i$ and $j$ in case of ties.
Each $\mathcal{T}_{i,j}$ can be further split nearly symmetrically into $\mathcal{T}^i_{j}$ and $\mathcal{T}^j_{i}$ by defining $\mathcal{T}^i_{j}$ to consist of the points in $\mathcal{T}_{i,j}$ that are closer to $\alpha_i$ than to $\alpha_j$. For an illustration of this partition, we refer to Figure~\ref{Fig_sectionstriangle} for $N=3$.

\begin{figure}
	\begin{center}
		\begin{tikzpicture}	
		\fill[scale=6,inner color = black!10!blue!15!, outer color = black!10!blue!15!] (0, 0) --	(1,0) -- (0.5,0.2887);
		\fill[scale=6,inner color = yellow!17!, outer color = yellow!17!] (0, 0) -- (0.5,0.2887)-- (0.5,0.866);
		\fill[scale=6,inner color = black!30!green!15!, outer color = black!30!green!15!] (1, 0) -- (0.5,0.2887)-- (0.5,0.866);
		\draw[scale=6,thick,blue] (0,0) node[left,black] {$\alpha_i$} -- (1,0) node[right,black] {$\alpha_k$};;	
		\draw[scale=6,thick,dashed,gray]  (0,0)-- (0.75,0.4325);;
		\draw[scale=6,thick,dashed,gray]  (1,0)-- (0.25,0.4325);;
		\draw[scale=6,thick,dashed,gray] (0.5,0) -- (0.5,0.866);;
		\draw[scale=6,thick,black!60!green]  (1,0) -- (0.5,0.866) node[above,black] {$\alpha_j$};;
		\draw[scale=6,thick,black!10!orange]  (0.5,0.866) --(0,0);;
		\node[black!10!orange] (none) at (1.7,1.7) { $\mathcal{T}^i_j$};
		\node[black!60!green] (none) at (4.4,1.7) { $\mathcal{T}^k_j$};
		\node[blue] (none) at (2.3,0.6) { $\mathcal{T}^i_k$};
		\node[blue] (none) at (3.8,0.6) { $\mathcal{T}^k_i$};
		\node[black!10!orange] (none) at (2.3,2.9) { $\mathcal{T}^j_i$};
		\node[black!60!green] (none) at (3.8,2.9) { $\mathcal{T}^j_k$};
		\node[black!10!orange] (none) at (1,2.7) { $\gamma_{i,j}$};
		\node[black!60!green] (none) at (5,2.7) { $\gamma_{j,k}$};
		\node[blue,below] (none) at (3,-0.2) { $\gamma_{k,i}$};
		\end{tikzpicture}
	\end{center}
	\caption{Illustration of the disjoint partition $\{\mathcal{T}^i_{j}\}_{i,j: i \neq j }$ of the simplex $\triangle^2$.}
	\label{Fig_sectionstriangle}
\end{figure}
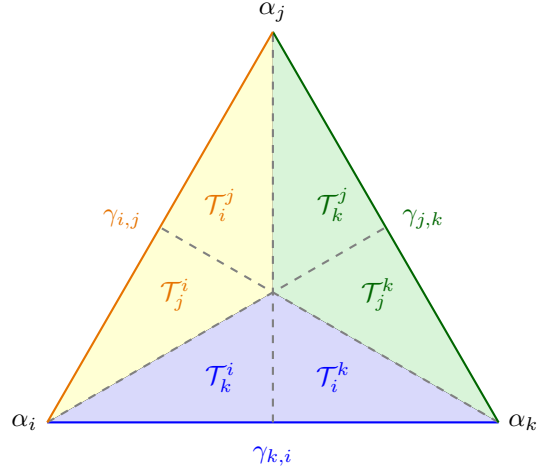

For the purpose of our construction of the $\psi_i$ from condition (A4), we introduce some further notation:
\begin{itemize}
\item For $ i \in \{1,...,N\}$, we denote by $\mathcal{U}_i:= \overline{B_{r_\mathcal{U}}(\alpha_i)}$ a ball around the vertex $\alpha_i$ with radius $r_\mathcal{U} \in \left(0,  \frac14\right]$.
\item For $i<j, \, i,j \in \{1,...,N\}$, we denote by $\mathcal{N}_{i,j}:=\{u \in \R^{N-1}: \dist(u, \gamma_{i,j}) \leq r_\mathcal{U} \sin(\beta_\mathcal{N})\}$ a neighborhood of the edge $\gamma_{i,j}$. Here, $\beta_\mathcal{N} \in (0,\frac\pi{12}(N-2)]$ is a fixed positive angle.
\end{itemize}
For a depiction of the resulting partition in the case $N=3$, we refer to Figure~\ref{Fig_neighbortriangle}.

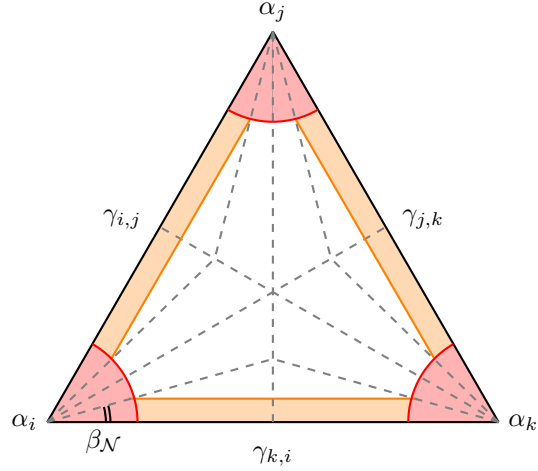
\begin{figure}
	\begin{center}
		\begin{tikzpicture}	
		   \fill[scale=6,inner color = orange!30!, outer color = orange!30!]  (0.82,0.052) -- (0.82,0) -- (0.18,0) -- (0.18,0.052);
		    \fill[scale=6,inner color = orange!30!, outer color = orange!30!]  (0.5,0.866) -- (0,0)  --(0.143,0.142)-- (0.45,0.672);
		     \fill[scale=6,inner color = orange!30!, outer color = orange!30!]  (0.5,0.866) -- (0.55,0.672)  -- (0.857,0.142)-- (1,0);
		   	\fill[scale=6,inner color = red!30!, outer color = red!30!] (0, 0) -- (0.2,0) arc (0:60:0.2);
		   	\fill[scale=6,inner color = red!30!, outer color = red!30!] (1,0) -- (0.8,0) arc (180:120:0.2);
		   	\fill[scale=6,inner color = red!30!, outer color = red!30!] (0.5,0.866) -- (0.5,0.666) arc (270:240:0.2);
		   		\fill[scale=6,inner color = red!30!, outer color = red!30!] (0.5,0.866) -- (0.5,0.666) arc (270:300:0.2);
		\draw[scale=6,thick,black] (0,0) node[left,black] {$\alpha_i$} -- (1,0) node[right,black] {$\alpha_k$};;
		\draw[scale=6,thick,dashed,gray]  (0,0)-- (0.75,0.4325);;
		\draw[scale=6,thick,dashed,gray]  (1,0)-- (0.25,0.4325);;
		\draw[scale=6,thick,dashed,gray] (0.5,0) -- (0.5,0.866);;
		\draw[scale=6,thick,black]  (1,0) -- (0.5,0.866) node[above,black] {$\alpha_j$};;
		\draw[scale=6,thick,black]  (0.5,0.866) --(0,0);;
		\node[black] (none) at (1,2.7) { $\gamma_{i,j}$};
		\node[black] (none) at (5,2.7) { $\gamma_{j,k}$};
		\node[black,below] (none) at (3,-0.2) { $\gamma_{k,i}$};
			\draw[scale=6,orange,thick]  (0.82,0.052) --(0.18,0.052);			
				\draw[scale=6,orange,thick]  (0.45,0.672) --(0.143,0.142);
			\draw[scale=6,orange,thick]  (0.55,0.672) --(0.857,0.142);
			\draw[scale=6,red,thick] (0.2,0) arc (0:60:0.2);
				\draw[scale=6,red,thick] (0.8,0) arc (180:120:0.2);
					\draw[scale=6,red,thick] (0.5,0.666) arc (270:240:0.2);
						\draw[scale=6,red,thick] (0.5,0.666) arc (270:300:0.2);	
	   \draw[scale=6,thick,dashed,gray]  (0,0)-- (0.5,0.14);;
	   \draw[scale=6,thick,dashed,gray]  (1,0)-- (0.5,0.14);;
	   \draw[scale=6,thick,dashed,gray]  (0,0)-- (0.3711,0.3625);;
	   \draw[scale=6,thick,dashed,gray]  (1,0)-- (0.6289,0.3625);;
	   \draw[scale=6,thick,dashed,gray]  (0.5,0.866)-- (0.3711,0.3625);;
	   \draw[scale=6,thick,dashed,gray]  (0.5,0.866)-- (0.6289,0.3625);;
	   	\draw[scale=6,black,thick] (0.13,0) arc (0:15:0.13);
	   	\draw[scale=6,black,thick] (0.14,0) arc (0:15:0.14);
	   	\node[black] (none) at (0.75,-0.25) { ${\beta}_\mathcal{N}$};
		\end{tikzpicture}
	\end{center}
	\caption{Illustration of the partition of the simplex $\triangle^2$ given by $\{\mathcal{U}_i \cap \triangle^2\}_i$, $\{(\mathcal{N}_{i,j} \setminus \mathcal{U}_i  )\cap \triangle^2\}_{i<j}$, and $\triangle^2\setminus \mathcal{N}_{i,j}$.}
	\label{Fig_neighbortriangle}
\end{figure}

We furthermore make use of a couple of additional abbreviations.
\begin{itemize}
\item For any $i<j, \, i,j \in \{1,...,N\}$, we denote by $P_{i,j}: \mathcal{T}_{i,j} \rightarrow \gamma_{i,j}$ the standard orthogonal projection onto $\gamma_{i,j}$, i.\,e.\ the projection onto the nearest point on $\gamma_{i,j}$.
\item We denote by $P^{rad,i}_{i,j}: \mathcal{T}_{i,j} \rightarrow \gamma_{i,j}$ the radial projection onto $\gamma_{i,j}$ with respect to $\alpha_i$, i.e.  $P^{rad,i}_{i,j} u$ denotes the point on $\gamma_{i,j}$ with $|P^{rad,i}_{i,j} u-\alpha_i|=|u-\alpha_i|$.
\item For any $u \in \mathcal{T}_{i,j}$, we denote by \(\beta^i_{j}(u)\) the angle formed by $u - \alpha_i$ and $\gamma_{i,j}$.
\end{itemize}
For an illustration of these notions, we refer to Figure~\ref{Fig_projections} (again in the case $N=3$).

	\begin{figure}
	\begin{tikzpicture}[scale=1.1]
	\draw[scale=7,thick,black] (0,0) node[left,black] {$\alpha_i$} -- (0.5,0); 
	\draw[scale=7,thick,dashed,gray]  (0,0)-- (0.5,0.29);;
	\draw[scale=7,thick,dashed,gray] (0.5,0) -- (0.5,0.29);;
	\draw[scale=7,black,thick,dashed] (0.4,0) arc (0:21:0.4);
	\draw[scale=7,black,thick,dashed] (0.373,0) -- (0.373,0.141);
	\draw[scale=7,black,thick,dashed] (0,0) -- (0.373,0.141);
	\node (none) at (2.243,0.84) [circle,fill,black,inner sep=0.5pt]{};
	\draw[scale=7,black,thick] (0.15,0) arc (0:21:0.15);
	\draw[scale=7,black,thick] (0.16,0) arc (0:21:0.16);
	\node[black] (none) at (1.62,0.22) { \small $\beta^i_{j}(u)$};
	\node[black] (none) at (2.3,-0.25) { \small $P_{i,j} u $};
	\node[black] (none) at (3.2,-0.25) { \small $P^{rad,i}_{i,j} u $};
	\node[black] (none) at (2.65,1.15) { $ u $};
		\node[black] (none) at (3.15,1.4) { $ \mathcal{T}^i_{j}$};
			\node[black] (none) at (1.1,-0.25) { $\gamma_{i,j} $};
	\end{tikzpicture}
	\caption{Projections of $u \in \mathcal{T}^i_{j}$ onto $\gamma_{i , j }\subset \triangle^2$.}
	\label{Fig_projections}
\end{figure}

\begin{definition}[Strongly coercive $N$-well potential on the simplex]\label{def:3wellW}
We call a function $W: \triangle^{N-1} \rightarrow [0,\infty)$ a \emph{strongly coercive symmetric $N$-well potential on the simplex} if it satisfies the following list of properties:
\begin{enumerate}
	\item The nonnegative function $W \in C^{1,1}(\triangle^{N-1}; [0,\infty))$ vanishes exactly in the $N$ vertices $\{\alpha_1,..., \alpha_N\}$ of the simplex $\triangle^{N-1}$. It furthermore has the same symmetry properties as the simplex $\triangle^{N-1}$.
	\item Given the geodesic distance
	\begin{align}\label{eq:distW}
	\dist_W(v,w) := \inf  \Big\{ \int_{0}^{1} \sqrt{2W(\gamma(s))} |\gamma'(s)|  \, \ds : \notag \gamma \in C^1([0,1];\R^2)& \\  \text{ with } 
	\gamma(0)=v, \gamma(1)= w& \Big\} ,
	\end{align}
	the infimum for $\dist_W(\alpha_i, \alpha_j)$ is achieved by $\gamma_{i,j}$ and $\dist_W(\alpha_i, \alpha_j)=1$ for any $i,j \in \{1,...,N\}$, $i\neq j$.
	\item (Growth near the minima $\alpha_i$ depending on the angle.) For any distinct $ i,j \in \{1,...,N\}$ and any $u \in \mathcal{U}_i \cap \mathcal{T}_{i,j}$, we have the estimate
	\begin{align}
		(1 + \omega(\beta^i_{j}(u))) W(P^{rad,i}_{i,j}u) \leq W(u),
	\end{align}
	where $\omega:[0, \tfrac{\pi}{6(N-2)}] \rightarrow [0, \infty)$ is a  $C^{1}$ increasing function such that 
	\begin{subequations}\label{eq:omegabound}
		\begin{align} 
		\omega(\beta)&=0 \quad \text{for } \beta = 0 	\,,
		\\
		\omega\left(\beta\right) &\geq 0 
		 \quad \text{for } \beta \in (0, \beta_\mathcal{N}),\\
		\omega\left(\beta\right) &> 	C_\omega
		 \quad \text{for } \beta \in [\beta_\mathcal{N}, \tfrac{\pi}{6(N-2)}],
		\end{align}
	\end{subequations}
where $C_\omega >0$ is a suitable large constant depending on $N$ and where $\beta_{\mathcal{N}}\leq \frac{\pi}{12(N-2)}$.
	
	\item (Growth properties of $W$ and Lipschitz estimate for $\sqrt{2W(u)}$ on the edges $\gamma_{i,j}$.) There exist constants $c_\gamma, C_\gamma>0$ such that
	\begin{align}  \label{eq:W2}
			c_\gamma (u- \alpha_i)^2(u- \alpha_j)^2 \leq W(u) \leq C_\gamma(u- \alpha_i)^2(u- \alpha_j)^2 ,
	\end{align}
holds for all $u\in \gamma_{i,j}$ and any distinct $ i,j \in \{1,...,N\}$.
Furthermore, there exists a constant $L_\gamma>0$ such that for any $u_1, u_2 \in \gamma_{i,j}$
	\begin{align} \label{eq:WLip}
	|	\sqrt{2W(u_1)} - \sqrt{2W(u_2)}| \leq L_\gamma |u_1 - u_2|\,.
	\end{align}
	\item (Growth behavior as one leaves the shortest paths $\gamma_{i,j}$.) For any distinct $i,j \in \{1,...,N\}$ and any $u \in  \mathcal{T}_{i,j}  \cap (\mathcal{N}_{i,j} \setminus (\mathcal{U}_i \cup \mathcal{U}_j))$, the lower bound
	\begin{align} \label{eq:W3}
(1+  C_{\mathcal{N}}\dist^2(u_\vareps, \gamma_{i,j})) W(P_{i,j}u_\varepsilon ) \leq W( u_\varepsilon)
	\end{align}
holds, where $C_{\mathcal{N}} >0$ is a suitable large constant depending on $r_\mathcal{U}, {\beta}_\mathcal{N},$ $L_\gamma, c_\gamma$.
\item (Lower bound away from the paths $\gamma_{i,j}$.) For any $u \in  \triangle \setminus \left( \cup_{i} \mathcal{U}_i \cup_{i < j } \mathcal{N}_{i,j}  \right) $
\begin{align} \label{eq:W4}
\max_{v \in \cup_{i<j } \gamma_{i , j } } W(v) \leq \frac{1}{C_{\text{int}}} W(u)\,,
\end{align}
where $C_{\text{int}}>0$ is a suitable large constant depending on $N, r_\mathcal{U}, {\beta}_\mathcal{N}, c_\gamma, C_\gamma$.
\end{enumerate}
\end{definition}

\subsection{Construction of the approximate phase indicator functions $\psi_i$}
In this subsection we provide an ansatz for the set of functions $\psi_i:\triangle^{N-1}  \rightarrow [0,1]$, $1\leq i\leq N$, in the case of a strongly coercive $N$-well potential on the simplex $W:\triangle^{N-1} \rightarrow [0,\infty)$. Recall that the goal is to construct the $\psi_i$ to satisfy condition (A4) (as introduced Section \ref{sec:mainresults}).
 
Since \((\psi_1 \circ u_\varepsilon,..., \psi_N\circ u_\varepsilon)\) represents an approximation of the limit partition \((\bar{\chi}_1,..., \bar{\chi}_N)\) and since by assumption we have $\dist_W(\alpha_i,\alpha_j)=\delta_{ij}$,
our ansatz for $\psi_i$ on the edges $\gamma_{i,j}$ is 
\begin{align}
\psi_i(u_\varepsilon):= \begin{cases}
1- \dist_{W}(\alpha_i, u_\varepsilon) \text{ along } \gamma_{i,j} \quad  \text{for }  j \in  \{1,...,N\}\setminus \{i\}  \,,  \\
0  \quad  \text{ along } \gamma_{j,k}  \text{ for } j,k \in  \{1,...,N\}\setminus \{i\} :  j <k\,.
\end{cases}
\end{align}
In the following we extend this definition of the set of functions $\psi_i$ on the domain ${\triangle}^{N-1}$. In order to do this, we introduce three interpolation and/or cutoff functions.
\begin{lemma}[Interpolation functions] \label{lemma:interpolation}
Let $\beta_{\mathcal{N}}\in (0,\frac{\pi}{12(N-2)}]$ and $r_{\mathcal{U}}\in (0,\frac{1}{4}]$.
The following statements hold:
	\begin{enumerate}
		\item 	There exists a function $\lambda: [0, \tfrac{\pi}{6}] \rightarrow [0, 1]$ of at least $C^{1}$ regularity satisfying the properties
		\(\lambda(\beta) = 0\) and $ \partial_\beta \lambda =0$ for all $ \beta \in [0, \beta_\mathcal{N}]$,  $\lambda(\frac{\pi}{6(N-2)}) = 1 $, and 
		\begin{align} \label{eq:partiallambda} 
		\max_{\beta \in \left[0,\tfrac{\pi}{6(N-2)} \right]} | \partial_\beta \lambda | \leq 4 (N-2) \,.
		\end{align}
		\item There exists a function $\eta: [0, 1] \rightarrow [0, 1]$ of at least $C^{1}$ regularity satisfying the properties:
		\(\eta(s) = 1\) for $s \in [0, r_\mathcal{U}\cos(\beta_\mathcal{N})]$, \(\eta(s) = 0\) for $s \in [1- r_\mathcal{U}\cos(\beta_\mathcal{N}),1]$, $\eta(s)+\eta(1-s)=1$ for all $s\in [0,1]$,
		and 
		\begin{align} \label{eq:partialeta} 
		\max_{s \in [0,1]} | \partial_s \eta |   \leq \tfrac{5}{2} \,.
		\end{align}
	\end{enumerate}
\end{lemma}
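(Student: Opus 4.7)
The plan is to build both functions explicitly as piecewise $C^1$ functions with a uniform three-piece structure: \emph{quadratic smoothing -- linear segment -- quadratic smoothing}. This template keeps the derivative arbitrarily close to the reciprocal of the length of the transition interval, which is the optimal rate compatible with a $C^1$ transition that matches zero slopes at the boundary. The two numerical bounds $4(N-2)$ and $5/2$ will then follow from the (strict) comparisons $\pi - 3 > 0$ and $1 - 2r_{\mathcal U}\cos\beta_{\mathcal N}\geq 1/2$ respectively.

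For the function $\lambda$ I would first set $\lambda\equiv 0$ on $[0,\beta_{\mathcal N}]$ and $\lambda\equiv 1$ on $[\pi/(6(N-2)), \pi/6]$. On the transition interval $[\beta_{\mathcal N}, \pi/(6(N-2))]$ -- whose length $L$ satisfies $L\geq \pi/(12(N-2))$ thanks to the hypothesis $\beta_{\mathcal N}\leq \pi/(12(N-2))$ -- I would glue an increasing quadratic with vanishing derivative at $\beta_{\mathcal N}$ and slope $k$ at $\beta_{\mathcal N}+\delta$, an affine middle with slope $k$, and a decreasing quadratic with slope $k$ at $\pi/(6(N-2))-\delta$ and vanishing derivative at $\pi/(6(N-2))$. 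Requiring continuity of $\lambda$ at the two inner junctions forces the one-parameter relation $k(L-\delta)=1$; choosing $\delta\leq (\pi-3)/(12(N-2))$ then yields $k\leq 4(N-2)$. The resulting function is $C^1$, takes values in $[0,1]$, and fulfils all listed properties.

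For the function $\eta$ I would apply the same three-piece ansatz on the transition interval $[a, 1-a]$, where $a:=r_{\mathcal U}\cos\beta_{\mathcal N}\leq 1/4$. Set $\eta\equiv 1$ on $[0,a]$ and $\eta\equiv 0$ on $[1-a, 1]$; on the transition interval the middle is affine with negative slope $-k'$, flanked by two mirror-image quadratic caps of half-width $\delta'$. The symmetry $\eta(s)+\eta(1-s)=1$ will follow automatically from the mirror structure once the continuity condition $k'(L'-\delta')=1$ (with $L':=1-2a\geq 1/2$) is imposed; a piecewise check on each of the five subintervals $[0,a]$, $[a,a+\delta']$, $[a+\delta', 1-a-\delta']$, $[1-a-\delta', 1-a]$, $[1-a, 1]$ verifies the identity using exactly that relation. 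Any $\delta'\leq 1/10$ gives $k'\leq 1/(1/2-\delta')\leq 5/2$.

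The only point that needs more than a passing remark is the comparison between the derivative bound and the optimal slope for a $C^1$ transition: the textbook cubic Hermite interpolant $3t^2-2t^3$ achieves maximum slope $3/(2L)$, which would produce $\approx 5.73(N-2)$ for $\lambda$ and $3$ for $\eta$ -- both of which exceed the constants required here. The piecewise-quadratic-linear-quadratic ansatz circumvents this issue because its maximum slope can be driven down to any value strictly greater than $1/L$ (respectively $1/L'$) by taking the smoothing widths sufficiently small, and both $1/L$ and $1/L'$ sit comfortably below the required bounds. No other step is geometrically delicate; the remainder is verification of the matching conditions and the symmetry identity.
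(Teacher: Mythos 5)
Your construction is correct and fills in the details that the paper declares ``straightforward'' and omits. The trapezoidal profile (constant\,--\,quadratic cap\,--\,linear\,--\,quadratic cap\,--\,constant) with the matching condition $k(L-\delta)=1$ (resp.\ $k'(L'-\delta')=1$) is exactly the natural way to drive the maximum slope arbitrarily close to the optimal $1/L$, and your numerical checks are right: $L\ge\pi/(12(N-2))$ gives $1/L\le12(N-2)/\pi<4(N-2)$, so $\delta\le(\pi-3)/(12(N-2))$ suffices, while $L'=1-2r_{\mathcal U}\cos\beta_{\mathcal N}\ge1/2$ gives $1/L'\le2<5/2$, so $\delta'\le1/10$ suffices, and the mirror structure together with the matching relation yields $\eta(s)+\eta(1-s)=1$ on each of the five subintervals. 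Your remark that the cubic Hermite interpolant fails to meet the stated constants (giving roughly $5.73(N-2)$ and $3$) is a genuinely useful observation justifying why the extra middle linear piece is needed rather than a single polynomial ramp.
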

We omit the proof of the lemma, as it is straightforward. We finally proceed to the construction of the functions $\psi_i$ from (A4) in the case of a strongly coercive $N$-well potential on the simplex.

\begin{construction} \label{def:approxpartition}
	Let $W:\triangle^{N-1} \rightarrow [0,\infty)$ be a strongly coercive symmetric $N$-well potential on the simplex in the sense of Definition~\ref{def:3wellW}. 
	We define the associated set of functions $\psi_i:\triangle \rightarrow [0,1]$, $1\leq i\leq N$, as it follows.
	For $i \in \{1,...,N\}$, we construct $\psi_i$ on the edge between $\alpha_i$ and $\alpha_j$ ($j \in  \{1,...,N\}\setminus \{i\}$) by
	\begin{align}
	\label{DefPsiiOnEdge}
	\psi_i(u):= 1- \dist_{W}(\alpha_i, u)\quad \text{for }  u \in \gamma_{i,j}.
	\end{align}
	Let $j \in  \{1,...,N\}\setminus \{i\}$.
	For any $u \in \mathcal{T}^i_j$, we set
	\begin{align*}
	\psi_k(u) :=  0\quad\text{for any }k \in  \{1,...,N\}\setminus \{i,j\}.
	\end{align*}
	Furthermore, we define $\psi_i$ and $\psi_j$ on $\mathcal{T}^i_j \cap (\mathcal{N}_{i,j} \cup   \mathcal{U}_i )$ as follows:
	\begin{itemize}
		\item 	If $u\in \mathcal{U}_i   \cap \mathcal{T}^i_{j}$, we set 
		\begin{subequations} \label{eq:psiU}
		\begin{align} 
		\psi_i(u ) &:=  \psi_i(P^{rad,i}_{i,j}u )  \,,\\	
		\psi_j(u) &:=  (1 - \lambda(\beta^i_{j}(u))) \psi_j(P^{rad,i}_{i,j}u ) \,.
		\end{align}
		\end{subequations}
		\item If $ u \in (\mathcal{N}_{i,j} \setminus  \mathcal{U}_i )\cap \mathcal{T}^i_{j} $, we set
		\begin{subequations} \label{eq:psiN}
		\begin{align}
		\psi_i(u ) &:= \eta(|P_{i,j}u - \alpha_i|) \psi_i(P^{rad,i}_{i,j}u ) 
		+ \eta(|P_{i,j}u - \alpha_j|)  \psi_i(P^{rad,j}_{i,j}u )  \,,\notag\\	
		\psi_j(u ) &:=  \eta(|P_{i,j}u - \alpha_i|)  \psi_j(P^{rad,i}_{i,j}u )  
		+ \eta(|P_{i,j}u - \alpha_j|)  \psi_j(P^{rad,j}_{i,j}u ) .
		\end{align}		
			\end{subequations}
	\end{itemize}
	Finally, outside of the domain $\mathcal{M}_i:=\bigcup_j \mathcal{U}_j\cup \bigcup_{j:j\neq i} \mathcal{N}_{i,j} \cup\bigcup_{j,k:j,k\neq i,j<k}\mathcal{T}_{j,k}$ on which we have defined $\psi_i$ so far, we define $\psi_i$ as a suitable $C^{1,1}$ extension:
	\begin{itemize}
		\item If $u \in \triangle^{N-1}\setminus \mathcal{M}_i$, we define
		\begin{subequations} \label{eq:psiint}
			\begin{align}
			\psi_i(u_\varepsilon ) :=  \psi_{i}^{\text{int}}(u)
			\end{align}
		\end{subequations}
		where $\psi_{i}^{\text{int}}:   \triangle^{N-1} \rightarrow [0,1] $ is a suitable $C^{1,1}$ extension of $\psi_i:\mathcal{M}_{i}\cap \triangle^{N-1}\rightarrow [0,1]$ that almost preserves the Lipschitz constant of $\psi_i:\mathcal{M}_{i}\cap \triangle^{N-1}\rightarrow [0,1]$.
	\end{itemize}
\end{construction}

\subsection{Existence of a set of suitable approximate phase indicator functions} \label{sec:maincoerprop}

\begin{proof}[Proof of Proposition \ref{prop:hpW}]	
	It directly follows from Construction~\ref{def:approxpartition} that the set of functions $\psi_i:\triangle \rightarrow [0,1]$, $1\leq i\leq N$, satisfy $\psi_i=1$ at $\alpha_i$ and $\psi_i(u)<1$ for $u\neq \alpha_i$.
	
We next show the validity of (A4) in a given set $\mathcal{T}^i_{j}$, which we further decompose into $\mathcal{U}_i  \cap \mathcal{T}^i_{j} $ , $(\mathcal{N}_{i,j} \setminus  \mathcal{U}_i )\cap \mathcal{T}^i_{j}$, and $ \mathcal{T}^i_{j} \setminus (  \mathcal{U}_i \cup \mathcal{N}_{i,j})$.
	
\emph{Step 1: Proof of (A4) in $\mathcal{U}_i  \cap \mathcal{T}^i_{j}$.} Let $u \in \mathcal{U}_i   \cap \mathcal{T}^i_{j}$. Recall $\psi_0 := 1 - \sum_{\ell=1}^N \psi_\ell $. Due to $\dist_W(\alpha_i,\alpha_j)=1$ and \eqref{DefPsiiOnEdge}, it follows that $\psi_i(P^{rad,i}_{i,j}u)=1-\psi_j(P^{rad,i}_{i,j}u)$. Thus, we have $\psi_0(u ) =  \lambda(\beta^i_{j}(u)) \psi_j(P^{rad,i}_{i,j}u)$. We also have $(\tfrac{\alpha_j-\alpha_i}{|\alpha_j-\alpha_i|}\cdot \nabla)\psi_i(P^{rad,i}_{i,j}u)=\sqrt{2W(P^{rad,i}_{i,j}u)}$.
	Using \eqref{eq:psiU}, we can compute
	\begin{align*}
	\partial_u \psi_{i,j} (u) &=  (2 - \lambda(\beta^i_{j}(u)) ) \sqrt{2W(P^{rad,i}_{i,j}u )} \vec{e}_{rad,i}(u) \\
	&\quad  - \partial_\beta \lambda(\beta^i_{j}(u)) \frac{1}{|u  - \alpha_i|} \psi_j(P^{rad,i}_{i,j}u )   \vec{e}_{\beta^i_{j}}(u)
	\,,\\	
	 \partial_u \psi_0(u ) &= \lambda(\beta^i_{j}(u))  \sqrt{2W(P^{rad,i}_{i,j}u )} \vec{e}_{rad,i}(u) \\
	&\quad +  \partial_\beta \lambda(\beta^i_{j}(u)) \frac{1}{|u  - \alpha_i|} \psi_j(P^{rad,i}_{i,j}u )   \vec{e}_{\beta^i_{j}}(u),
	\end{align*}
	where $\vec{e}_{rad,i}(u), \vec{e}_{\beta^i_{j}}(u) $ are orthogonal vectors associated to the $(N-1)$-dimensional spherical coordinates pointing in the direction of steepest ascent of $|u-\alpha_i|$ respectively $\beta^i_{j}(u)$; i.\,e.\ , in particular we have $\vec{e}_{rad,i}(u) := \frac{u - \alpha_i}{|u - \alpha_i|}$.
    For the sake of brevity, we omit the dependencies on $ \beta^i_{j}(u)$ in the following.
    Then, it follows that
	\begin{align*}
	&|\partial_u \psi_{i,j} (u )|^2 \leq \left( (2 - \lambda)^2 + 
	|\partial_\beta \lambda|^2 \right) 2W(P^{rad,i}_{i,j}u ) 
	\,,\\	
	&	|\partial_u \psi_0(u )|^2 \leq   \big(\lambda^2 + 
	|\partial_\beta \lambda|^2  \big) 2W(P^{rad,i}_{i,j}u )  \,,\\
	&	|\partial_u \psi_{i,j} (u ) \cdot \partial_u \psi_0(u )|  \leq  \left( \lambda(2-\lambda ) + 
	|\partial_\beta \lambda|^2 \right) 2W(P^{rad,i}_{i,j}u ) \,.
	\end{align*} 			
For $\delta>0$ small enough, we have
\begin{align*}
&\big|\tfrac{1}{2}\partial_u \psi_{i,j}(u)\big|^2 
+ \left( 1 +   \delta \right)  \big|\tfrac{1}{2\sqrt{3}}\partial_u \psi_0(u) \big|^2		
+  \delta   \left| \partial_u \psi_{i,j} (u)\cdot   \partial_u \psi_{0} (u) \right| \\
	& \leq  
\Big[ \tfrac14(2 - \lambda)^2 + 
\tfrac14 |\partial_\beta \lambda|^2 
+ (1+\delta)
  \tfrac{1}{12}\big( \lambda^2 + 
|\partial_\beta \lambda|^2  \big) \\
& \quad  \;\;   +  \delta 	\left( \lambda(1-\lambda) + 
C|\partial_\beta \lambda|^2\right) \Big] 2W(P^{rad,i}_{i,j}u) 	\\
& \leq  
\bigg[ 1 - \lambda + \tfrac{1}{3}\lambda^2 + 
\tfrac{1}{3} |\partial_\beta \lambda|^2
+ C\delta \bigg] 2W(P^{rad,i}_{i,j}u) 	\\
 &\leq 
	(1 + \omega(\beta^i_{j}(u))) 2W(P^{rad,i}_{i,j}u )\\
	&\leq 
	2W(u)\,,
	\end{align*}
	where we used \eqref{eq:partiallambda} and \eqref{eq:omegabound} together with the fact that $\delta_{\text{cal}}$ and hence $\delta$ can be chosen arbitrarly small.

\emph{Step 2: Proof of (A4) in $u \in (\mathcal{N}_{i,j} \setminus  \mathcal{U}_i )\cap \mathcal{T}^i_{j}$.} 
	Let $ u \in (\mathcal{N}_{i,j} \setminus  \mathcal{U}_i )\cap \mathcal{T}^i_{j} $. First, note that $\psi_0 = 1 - \psi_i - \psi_j  \equiv 0$ on $ (\mathcal{N}_{i,j} \setminus  \mathcal{U}_i )\cap \mathcal{T}^i_{j} $.
	Using \eqref{eq:psiN}, we compute
	\begin{align*}
	&\partial_u \psi_{i,j} (u ) \\
	&=  \eta(|P_{i,j}u- \alpha_i|)  2 \sqrt{2W(P^{rad,i}_{i,j}u )} \vec{e}_{rad,i}(u)
	-  \eta(|P_{i,j}u - \alpha_j|) 2 \sqrt{2W(P^{rad,j}_{i,j}u )} \vec{e}_{rad,j}(u)\\
	& \quad + \partial_u \eta(|P_{i,j}u- \alpha_i|) \left[\psi_j(P^{rad,i}_{i,j}u ) - \psi_j(P^{rad,j}_{i,j}u ) + \psi_i(P^{rad,j}_{i,j}u ) - \psi_i(P^{rad,i}_{i,j}u ) \right]
	\,,
	\end{align*}
	where $\vec{e}_{rad,i}(u) := \frac{u - \alpha_i}{|u - \alpha_i|}$ and $\vec{e}_{rad,j}(u) := \frac{u - \alpha_j}{|u - \alpha_j|}$.
	Note that we have
	\begin{align}
	|P^{rad,i}_{i,j}u - \alpha_i|  &\leq |P_{i,j} u  - \alpha_i| + \frac{\dist^2(u, \gamma_{i,j})}{2|P_{i,j} u - \alpha_i|}\,,		\\
	\label{EstimateDifferenceProjections}
		|P^{rad,i}_{i,j}u - P^{rad,j}_{i,j}u | &\leq  \frac{\dist^2(u, \gamma_{i,j})}{2|P_{i,j} u - \alpha_i||P_{i,j} u - \alpha_j|} \,.
		\end{align}
	As a consequence, we obtain
	\begin{align*}
	&	\max\{ \psi_j(P^{rad,i}_{i,j}u ) - \psi_j(P^{rad,j}_{i,j}u), \psi_i(P^{rad,j}_{i,j}u ) - \psi_i(P^{rad,i}_{i,j}u)\}\\
	&\leq  \sqrt{2W(v_u )} \frac{\dist^2(u, \gamma_{i,j})}{2|P_{i,j} u- \alpha_i||P_{i,j} u - \alpha_j|}\,,
	\end{align*} 
	where $v_u \in \gamma_{i , j }$ maximum of $\sqrt{2W}$ on the segment connecting $P^{rad,j}_{i,j}u$ to $P^{rad,i}_{i,j}u$.
	From \eqref{eq:partialeta} and $\eta(|P_{i,j}u-\alpha_i|)+\eta(|P_{i,j}u-\alpha_j|)=1$ it follows that
		\begin{align}
		\label{EstimatePsiijNearGamma}
	\left| \tfrac12 \partial_u \psi_{i,j} (u ) \right|^2
	\leq     \left( 1
+  \frac{5}{4|P_{i,j} u - \alpha_i||P_{i,j} u - \alpha_j|} \dist^2(u, \gamma_{i,j})\right)^2 {2W(v_u)}   \,.
	\end{align}
		Using that $1-|P_{i,j}u-\alpha_i|=|P_{i,j}u-\alpha_j|$ and ${|P_{i,j} u - \alpha_i|^{-1}(1- |P_{i,j} u - \alpha_i|)^{-1}} \leq {r_\mathcal{U}^{-1}(1-r_\mathcal{U})^{-1}}$, one can obtain
		\begin{align*}
   &\left( 1
	+   \frac{5}{4|P_{i,j} u - \alpha_i||P_{i,j} u_\vareps - \alpha_j|} \dist^2(u, \gamma_{i,j})\right)^2  \\
	&\leq   1
	+  C_1 \dist^2(u, \gamma_{i,j})
	\end{align*}
	for $C_1 = \frac{5}{2r_\mathcal{U}(1-r_\mathcal{U})} + \frac{25 \sin^2(\beta_\mathcal{N})}{16(1-r_\mathcal{U})^2}$ since $\dist(u, \gamma_{i,j}) \leq r_\mathcal{U} \sin(\beta_\mathcal{N})$.
	On the other hand, first by adding a zero and using \eqref{eq:WLip}, then noting that $|v_u - \alpha_i| \leq |P^{rad,i}_{i,j}u- \alpha_i| $ and using \eqref{EstimateDifferenceProjections}, from \eqref{eq:W2} together with the fact that $ |P_{i,j}u - \alpha_i| \geq  \frac12$  we can deduce
		\begin{align*}
 &{2W(v_u)}   
  = {2W(P_{i,j}u)} \left(1 + \frac{\sqrt{2W(v_u)}  - \sqrt{2W(P_{i,j}u)}}{\sqrt{2W(P_{i,j}u)}} \right)^2
 \\
	& \leq {2W(P_{i,j}u)} \left(  1
	+    \frac{L_\gamma}{2|P_{i,j} u - \alpha_i| \sqrt{2W(P_{i,j}u)}} \dist^2(u, \gamma_{i,j}) \right)^2\\
		& \leq {2W(P_{i,j}u)} \left(  1
	+    \frac{L_\gamma}{\sqrt{2c_\gamma} } \tan^2(\beta^i_{j}(u)) \right)^2 \\
		& \leq {2W(P_{i,j}u)} \left(  1
	+   C_2 \tan^2(\beta^i_{j}(u)) \right) \,,
	\end{align*}
	where $C_2= 2 \frac{L_\gamma}{\sqrt{2c_\gamma} } + \tan^2(\beta_\mathcal{N})\frac{L^2_\gamma}{2c_\gamma } $.
	Moreover, one can compute
	\begin{align*}
 &\left( 1
	+  C_1 \dist^2(u_, \gamma_{i,j})\right) \left(  1
	+   C_2 \tan^2(\beta^i_{j}(u)) \right)\\
	&\leq 1 	+  C_1 \dist^2(u, \gamma_{i,j})  
	+   \frac{C_2}{r^2_\mathcal{U}  \cos^2 \beta_{\mathcal{N}}} \dist^2(u, \gamma_{i,j})
	+ C_1 C_2 \tan^2(\beta_\mathcal{N})  \dist^2(u, \gamma_{i,j})  \\
& \leq 1 +  C_{\mathcal{N}} \dist^2(u, \gamma_{i,j})
	\end{align*}
	for $C_{\mathcal{N}} =  C_1 + \frac{C_2}{r^2_\mathcal{U} \cos^2 \beta_{\mathcal{N}}} + C_1C_2 \tan^2(\beta_\mathcal{N})$. Using our assumption \eqref{eq:W3}, we can conclude from \eqref{EstimatePsiijNearGamma} and the preceding three estimates that
		\begin{align*}
	\left| \tfrac12 \partial_u \psi_{i,j} (u ) \right|^2 
	&	\leq  \left( 1
	+  C_{\mathcal{N}} \dist^2(u, \gamma_{i,j})\right){2W(P_{i,j}u)} \\
	&\leq 2W(u)\,.
	\end{align*}
	
\emph{Step 3: Proof of (A4) in $u \in \mathcal{T}^i_{j}\setminus \mathcal{M}_i$.}
Let $u \in \mathcal{T}^i_{j}\setminus \mathcal{M}_i$. By \eqref{eq:psiint} we have
\begin{align*}
\psi_{i,j}(u_\varepsilon ) &=  \psi_{j}^{\text{int}}(u ) - \psi_{i}^{\text{int}}(u ) \,, 
\\	\psi_0(u_\varepsilon ) &= 1 - \psi_{i}^{\text{int}}(u ) - \psi_{j}^{\text{int}}(u )   \,,
\end{align*}
where $\psi_{\ell}^{\text{int}}$, $\ell \in \{i,j\}$, is a $C^{1,1}$ extension of $\psi_{\ell}$ from $\mathcal{M}_{\ell}\cap \triangle^{N-1}$ to $\triangle^{N-1}$ that approximately preserves the Lipschitz constant $L_{\text{int},\ell}>0$. Thus, we have
\begin{align*}
|\partial_u \psi_{i,j} | \leq  (1+\delta)(L_{\text{int},i}+ L_{\text{int},j}) \,,  ~
\text{ and similarly } ~|\partial_u \psi_{0} | \leq (1+\delta) (L_{\text{int},i}+ L_{\text{int},j}) \,.
\end{align*}
It is not too difficult to derive an estimate on the Lipschitz constants $L_{\text{int},i}$ and $L_{\text{int},j}$ in terms of $\max_{w \in \cup_{\ell,m:\ell< m} \gamma_{\ell,m}}{2W(w)}$. To this aim, we first estimate
\begin{align*}
&|\psi_i(u)|\leq \max_{w \in \gamma_{i,m}} \sqrt{2W(w)} r_{\mathcal{U}}
\text{ for }u\in \gamma_{i,m}\cap \mathcal{U}_m\text{ and }m\neq i,
\\
&|\psi_i(P^{rad,i}_{i,m}u) - \psi_\ell(P^{rad,m}_{i,m}u)|
\leq \max_{w \in \gamma_{i,m}} \sqrt{2W(w)} |P^{rad,i}_{i,m}u-P^{rad,m}_{i,m}u|
\\&~~~~~~~~~
\stackrel{\eqref{EstimateDifferenceProjections}}{\leq} \max_{w \in \gamma_{i,m}} \sqrt{2W(w)} \frac{r_{\mathcal{U}}}{2(1-r_{\mathcal{U}})}  \sin^2(\beta_{\mathcal{N}})~~~
\text{ for }u\in \mathcal{N}_{i,m}\text{ and }m\neq i.
\end{align*}
Using these estimates, the definitions \eqref{eq:psiU}-\eqref{eq:psiN}, and the bounds \eqref{eq:partiallambda} and \eqref{eq:partialeta}, we obtain
\begin{align*}
& | \partial_u \psi_{i}  (u)| \leq  \max_{w \in \gamma_{i,j}} \sqrt{2W(w)}
\quad \text{for } u \in \mathcal{U}_i \,,
\\
& | \partial_u \psi_{i}  (u)| \leq  \big(1+4(N-2)r_\mathcal{U}\big) \max_{w \in \gamma_{i,m}} \sqrt{2W(w)}
\quad \text{for } u \in \mathcal{U}_m, m\neq i \,,
\\
& | \partial_u \psi_{i}  (u)|\leq \Big(1+ \frac{5r_{\mathcal{U}}}{4(1-r_{\mathcal{U}})}  \sin^2(\beta_{\mathcal{N}})\Big)\max_{w \in \gamma_{i,m}} \sqrt{2W(w)}     
\text{ for } u \in \mathcal{N}_{i,m},\, m \neq i.
\end{align*}
Furthermore, we have $|\partial_u \psi_{i}  (u)| =0$ in $\triangle^{N-1}\cap \left( \cup_{ m <n: m \neq i, n \neq i} \mathcal{T}_{m,n}\right)$. 
Defining
\begin{align*}
M := \max \bigg\{1, 1+4(N-2)r_\mathcal{U},1+ \frac{5r_{\mathcal{U}}}{4(1-r_{\mathcal{U}})}  \sin^2(\beta_{\mathcal{N}}) \bigg\} = 1+4(N-2)r_\mathcal{U}\,, 
\end{align*}
this yields by \eqref{eq:W4} for $u\in \mathcal{M}_i\cap \triangle^{N-1}$
\begin{align*}
| \partial_u \psi_{i} (u)| \leq M \max_{w \in  \bigcup_{\ell,m:\ell< m } \gamma_{\ell, m}   } \sqrt{2W(w)}\Big) =:M_{W}
\end{align*}
for any $u \in  \triangle^{N-1}\cap \mathcal{M}_i$.   
In order to estimate the Lipschitz constant $L_\text{int,i} $ of $\psi_{i}|_{\mathcal{M}_i\cap \triangle^{N-1}}$, one has to address the issue of nonconvexity of $\mathcal{M}_i$. It is not too difficult to see (but rather technical to prove) that for any pair of points $u,v\in \mathcal{M}_i$ there exists a connecting path $\tilde \gamma$ in $\mathcal{M}_i$ with $\text{len}(\tilde\gamma)\leq C_{\mathcal{M}}|u-v|$. This shows $L_{\text{int},i} \leq C_{\mathcal{M}} M_W$.
Having an upper bound for $L_{\text{int},i}$, using the fact that our extension of $\psi_\ell$ to $\triangle^{N-1}\setminus \mathcal{M}_i$ approximately preserves the Lipschitz constant, and choosing $C_{\text{int}}> \tfrac43 C^2_{\mathcal{M}} M^2$ in  \eqref{eq:W4}, we can compute for $u\in \mathcal{T}_j^i \setminus \mathcal{M}_i$ and $\delta>\sqrt{\delta_{\text{cal}}}$ (with $\delta_{\text{cal}}$ given by the gradient-flow calibration)
\begin{align*} 
&\big|\tfrac{1}{2}\partial_u \psi_{i,j}(u)\big|^2 
+ \left( 1 +   \delta \right) \big|\tfrac{1}{2\sqrt{3}}\partial_u \psi_0(u) \big|^2 	
+  \delta   \left| \partial_u \psi_{i,j} (u)\cdot   \partial_u \psi_{0} (u) \right|
\\& \leq  ( 1 +   \tfrac13 + 4\delta  )\max_m L^2_{\text{int},m}
\leq 2W(u).
\end{align*}
Here, we have used the fact that $\delta_{\text{cal}}>0$ and hence $\delta>0$ can be chosen arbitrarily small.
\end{proof}

\bibliographystyle{abbrv}
\bibliography{multiphase}

\end{document}